\DeclareMathOperator{\dist}{dist}
\DeclareMathOperator{\im}{Im}
\DeclareMathOperator{\Vol}{Vol}
\DeclareMathOperator{\B}{B}
\DeclareMathOperator{\vol}{Vol}
\DeclareMathOperator{\mt}{mt}
\DeclareMathOperator{\conv}{conv}
\DeclareMathOperator{\Var}{Var}
\def\R{\mathbb{R}}
\def\P{\mathbb{P}}
\def\T{\mathbb{T}}
\def\S{\mathbb{S}}
\def\cC{\mathcal{C}}
\def\cP{\mathcal{P}}
\def\cS{\mathcal{S}}
\def\cX{\mathcal{X}}
\def\cY{\mathcal{Y}}
\newcommand{\E}{\mathbb{E}} 
\newcommand{\given}{\;|\;}
\newcommand{\mean}[1] {\E\left\{{#1}\right\}}
\newcommand{\meanx}[1] {\E\{{#1}\}}
\newcommand{\cmean}[2] {\E\left\{#1\given #2\right\}}
\newcommand{\cmeanx}[2] {\E\{#1\given #2\}}
\newcommand{\ind}{\boldsymbol{\mathbbm{1}}} 
\newcommand{\var}[1]{\mathrm{Var}\param{{#1}}}
\newcommand{\set}[1]{\left\{#1\right\}}
\newcommand{\param}[1]{\left(#1\right)}
\newcommand{\abs}[1] {\left| {#1}\right|}
\newcommand{\prob}[1]{\mathbb{P}\left(#1\right)}
\newcommand{\cprob}[2]{\mathbb{P}\left(#1\given #2\right)} 
\newcommand{\eps}{\epsilon}
\newcommand{\by}{\mathbf{y}}
\newcommand{\bv}{\mathbf{v}}
\newcommand{\bw}{\mathbf{w}}
\providecommand{\setthms}[1]{#1}
\newtheorem{lem}{Lemma}[section]
\newtheorem{thm}[lem]{Theorem}
\newtheorem{prop}[lem]{Proposition}
\newtheorem{cor}[lem]{Corollary}
\newtheorem{con}[lem]{Conjecture}
\newtheorem{rem}[lem]{Remark}
\theoremstyle{definition}
\newtheorem{defn}[lem]{Definition}
\newcommand{\cech}{\v{C}ech }
\newcommand{\erdos}{Erd\H{o}s }
\newcommand{\renyi}{R\'enyi }
\newcommand{\iid}{\mathrm{i.i.d.}}
\newcommand{\ninf}{n\to\infty}
\newcommand{\pois}[1]{\mathrm{Poisson}\param{{#1}}}
\newcommand{\limninf}{\lim_{\ninf}}
\numberwithin{equation}{section}
\def\bsplit#1\esplit{\begin{split} #1 \end{split} }
\def\beq#1\eeq{\begin{equation} #1 \end{equation}}
\def\splitb#1\splite{\begin{split} #1 \end{split} }
\def\eqb#1\eqe{\begin{equation} #1 \end{equation}}
\title{\cech Complex in Riemannian Manfiolds}
\DeclareMathOperator{\dvol}{dvol}
\DeclareMathOperator{\dmu}{d\mu}
\author{Omer Bobrowski \\   Technion - Israel Institute of Technology \and  Goncalo Oliveira \\ Duke University}
\title{Random \cech Complexes on  Riemannian Manifolds} 
\date{\today}
\begin{document}
\maketitle



\begin{abstract}
In this paper we study the homology of a random \cech complex generated by a homogeneous Poisson process in a compact Riemannian manifold $M$.
In particular, we focus on the phase transition for ``homological connectivity'' where the homology of 
 the complex becomes isomorphic to that of $M$.
The results  presented in this paper are an important generalization of \cite{bobrowski_vanishing_2015}, from the flat torus to general compact Riemannian manifolds.
In addition to proving the statements related to homological connectivity, the methods we develop in this paper can be used as a framework for translating results for random geometric graphs and complexes from the Euclidean setting into the more general Riemannian one.
\end{abstract}


\section{Introduction}


\subsubsection*{Motivation}


In this paper we continue the work in \cite{bobrowski_vanishing_2015}, and extend the results on the homological connectivity (or the vanishing of homology) in random \cech complexes from the $d$-dimensional flat torus $\T^d$, to general $d$-dimensional compact Riemannian manifolds.

The study of random simplicial complexes is  originated in the seminal result of \erdos and \renyi  \cite{erdos_random_1959} on the phase transition for connectivity in random graphs $G(n,p)$ (with $n$ vertices, and where edges are included independently and with probability $p$) . In their paper, \erdos and \renyi studied these graphs in the limit when $n\to\infty$ and $p=p(n)\to 0$, and showed that the phase transition for connectivity occurs around $p = \log n/n$,  when the expected degree is approximately $\log n$.\footnote{Note that this is the more familiar formulation of the model, while the  one in \cite{erdos_random_1959} is slightly different, yet equivalent.}
Over the past decade, a body of results was established for higher dimensional generalizations of the $G(n,p)$ graph. In these generalizations, graphs are replaced by simplicial complexes, where in addition to vertices and edges we may include triangles, tetrahedra and higher dimensional simplexes. For example, in the \emph{Linial-Meshulam $k$-complex} \cite{linial_homological_2006,meshulam_homological_2009}, one starts with the full $(k-1)$-skeleton on $n$ vertices, and then attaches the $k$-faces independently and with probability $p$. In a different model, called the \emph{clique complex} \cite{kahle_topology_2009,kahle_sharp_2014}, one starts with a random graph $G(n,p)$ and then adds a $k$-face for every $(k+1)$-clique in the graph.
We will refer to these generalizations as random \emph{combinatorial} complexes (See \cite{kahle_topology_2014} for a survey). It turns out that the \erdos-\renyi threshold for connectivity can be generalized to that of ``homological connectivity'', where the higher homology groups $H_k$ become trivial.

In parallel to the study of  combinatorial complexes, a line of research was established for random \emph{geometric} complexes \cite{bobrowski_distance_2014,bobrowski_topology_2014,kahle_random_2011,yogeshwaran_topology_2015,yogeshwaran_random_2016}. This type of complexes generalizes the model of the \emph{random geometric graph} $G(n,r)$ (introduced in \cite{gilbert_random_1961}), where vertices are placed at random in a metric-measure space, and edges are included based on proximity (see \cite{penrose_random_2003}). The main differences between the geometric and the combinatorial models are twofold. Firstly, in geometric complexes  edges and faces added are no longer independent. Secondly, as we shall see in this paper, in geometric complexes the topology of the underlying metric space plays an important role in the behavior of the complex, whereas in the combinatorial models there is no underlying structure. 
In this paper we focus on the random \cech complex $\cC(n,r)$ generated by taking a random point process of size $n$, and asserting that $k+1$ points span a $k$-simplex, if the balls of radius $r$ around them have a nonempty intersection. 
 
Similarly to the study of combinatorial models, we examine the behavior of geometric complexes in the limit when $n\to\infty$ and $r=r(n)\to 0$. The work in \cite{bobrowski_vanishing_2015} established the first rigorous statement about the phase transition describing homological connectivity in random \cech Complexes generated over the $d$-dimensional flat torus $\T^d$. The main goal of this paper is to extend these results from the flat torus to general $d$-dimensional compact Riemannian manifolds. Note, that as opposed to the combinatorial models, for the \cech complex, we do not expect homology to become trivial in the limit, but rather to become isomorphic to that of the underlying manifold.

In addition to its mathematical value, the study of random geometric complexes is motivated by applications in engineering and statistics. A rising area of research called \emph{Topological Data Analysis} (TDA), or \emph{Applied Topology}, focuses on utilizing topology in  data analysis, machine learning, and network modeling (see \cite{carlsson_topology_2009,zomorodian_topological_2007,wasserman_topological_2016} for an introduction). The main idea is to use topological features (e.g.~homology, Euler characteristic, persistent homology) as a ``signature'' for various types of complex high-dimensional data. 
Geometric complexes are used often in TDA to translate data points into a combinatorial-topological space, which in turn can be fed into a software algorithm that  calculates its relevant topological properties.
It is therefore desired to develop a solid statistical theory for geometric complexes (see e.g.~\cite{balakrishnan_minimax_2012,bobrowski_topological_2017,chazal_sampling_2009,niyogi_finding_2008}), and an imperative part of this effort is to develop its probabilistic foundations (see e.g.~\cite{adler_crackle:_2014,bobrowski_maximally_2015,duy_limit_2016,owada_limit_2015}). Most of the results on random geometric complexes and graphs to date have been studied for point processes in a Euclidean space. In applications, however, it is commonly assumed that the data lie on (or near) a manifold. The results and methods in this paper provide an important gateway to analyzing such cases. In particular the threshold for homological connectivity is related to the problem of `topological inference' \cite{niyogi_finding_2008,niyogi_topological_2011}, where the goal is to recover topological properties of a manifold from a finite sample.


\subsubsection*{Main Result}


The main result of this paper is the generalized version of Theorem 5.4 in  \cite{bobrowski_vanishing_2015}.
In the following we assume that $M$ is a $d$-dimensional compact Riemannian manifold, and $\cP_n$ is a homogeneous Poisson process on $M$ with intensity $n$ (see definition in Section \ref{sec:Poisson}). With no loss of generality, and to shorten notation, we will assume that $\vol(M)=1$ as this will only affect the results by an overall scaling constant. In this case, we define $\Lambda := n \omega_d r^d$, where $\omega_d$ is the volume of a unit ball in the $d$-dimensional Euclidean space $\R^d$. For small $r>0$ this quantity approximates the expected number of points inside a ball of radius $r$, and can be thought of as measure of density for the {process} (when $\vol(M)\ne 1$ this should be $n\omega_dr^d/\vol(M)$). The following result is the main theorem of this paper, which is the Riemannian analog of Theorem 5.4 in \cite{bobrowski_vanishing_2015}, describing the phase transition for homological connectivity in terms of $\Lambda$.


\begin{thm}\label{thm:main}
Suppose that as $n\to\infty$, $w(n)\to\infty$. 
Then, for $1 \leq k \leq d-1$
\[
\limninf \prob{H_k(\cC(n,r)) \cong H_k(M)} = \begin{cases} 1 & \Lambda = \log n + k\log\log n + w(n),\\
0 & \Lambda = \log n + (k-2) \log\log n - w(n),\end{cases}
\]
\end{thm}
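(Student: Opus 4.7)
The plan is to follow the Morse-theoretic strategy used for the flat torus in \cite{bobrowski_vanishing_2015} and transfer it to the Riemannian setting by means of a local Euclidean approximation. The first reduction is the Nerve Lemma: since $r\to 0$, eventually $r$ is smaller than the convexity radius of $M$, so the geodesic balls $B_r(x)$ form a good cover of $U_r := \bigcup_{x\in\cP_n} B_r(x)$ and hence $\cC(n,r)\simeq U_r$. Thus it suffices to understand when the natural inclusion $U_r \hookrightarrow M$ induces an isomorphism on $H_k$. The second reduction is Morse-theoretic: the sublevel sets $\{d_{\cP_n}\le r\}$ of the distance function $d_{\cP_n}(y)=\min_{x\in\cP_n}d(x,y)$ coincide with $U_r$, so the only way the homotopy type of $U_r$ can change as $r$ grows is through critical points of $d_{\cP_n}$ in the sense of \cite{bobrowski_vanishing_2015}, i.e.\ configurations of $(k+1)$ Poisson points lying on the boundary of a common empty geodesic ball whose center belongs to the (geodesic) convex hull of the configuration. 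Homological connectivity at level $k$ is then equivalent to the absence of such critical points of index $\ge k$ with critical value $>r$, up to the critical points that already account for $H_k(M)$.

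For the positive direction, when $\Lambda=\log n + k\log\log n + w(n)$, I would estimate the expected number of index-$(\ge k)$ critical points with critical radius larger than $r$ using the Mecke/Slivnyak formula, and push this expectation through a partition of $M$ into a bounded collection of normal coordinate charts of radius comparable to the injectivity radius. In each chart, the exponential map has Jacobian $1+O(r^2)$ uniformly, so the local integrals reduce to the Euclidean integrals computed in \cite{bobrowski_vanishing_2015} with multiplicative errors tending to one. Summing over charts and using $\vol(M)=1$ gives the same dominant exponential $n\exp(-\Lambda)$ with the polylogarithmic correction from the $(k+1)$-point kernel, which tends to zero precisely when $w(n)\to\infty$; a Markov inequality then rules out such critical points.

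For the negative direction, at $\Lambda=\log n+(k-2)\log\log n-w(n)$ I would produce, with probability tending to one, a bad configuration that creates an extra $H_k$ class (or kills one of $H_k(M)$). Concretely, one finds many isolated empty ``caps'' whose boundary Čech complex is a $k$-sphere not filled in by other simplices. Existence is established by a second-moment / Poisson approximation argument on a maximal family of disjoint small geodesic balls inside a single chart, again reducing the local computation to the Euclidean counterpart and using the lower density implied by the gap $w(n)$. The construction must be arranged so that the obstructing cycle is visible in $U_r$ rather than absorbed by neighboring simplices, which one controls by spacing the configurations at distance much greater than $r$.

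The main obstacle is making the local Euclidean comparison uniform across $M$ and at the correct order of precision. Because $r=r(n)\sim(\log n / (n\omega_d))^{1/d}$ but the relevant probabilities involve $\exp(-n\omega_d r^d)$, errors in volume of even relative order $r^2$ could shift the threshold by lower-order logarithmic terms, so one must show these curvature corrections are $o(1/\log n)$ in the integrands. A second, related difficulty is the non-smoothness of $d_{\cP_n}$ together with cut-locus phenomena: one must check that critical points are generically not created by cut-locus interactions and that the geodesic convex hull condition defining critical points behaves robustly under the exponential map. Once these geometric estimates are packaged as a general translation scheme, the Euclidean arguments of \cite{bobrowski_vanishing_2015} can be invoked essentially verbatim.
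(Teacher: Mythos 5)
Your proposal captures the paper's broad strategy (Morse theory for $\rho_{\cP_n}$, first moment for the upper threshold, an obstruction construction plus a second moment for the lower threshold, local Euclidean volume comparison). However, there are several genuine gaps as written.

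First, for the upper threshold you propose to bound the expected number of critical points of index $\ge k$ with critical value $>r$ and apply Markov. This fails: the first-moment bound on $C_j(r,r_0)$ is $O(n\Lambda^{j-1}e^{-\Lambda})$, and at $\Lambda=\log n + k\log\log n + w(n)$ this equals $O((\log n)^{j-1-k}e^{-w(n)})$, which diverges for every $j\ge k+2$ when $w(n)$ grows slowly (e.g.~$w(n)=\sqrt{\log\log n}$, which the hypothesis allows). The correct argument only needs to rule out critical points of index $k$ and $k+1$ in an interval $(r,r_0]$, since critical points of index $j$ only affect $H_{j-1}$ and $H_j$; those two expectations do tend to zero. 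Second, your sketch is silent about why absence of critical points implies $H_k(\cC_r)\cong H_k(M)$ rather than merely stabilizing $H_k$. The paper closes this loop via a coverage step: choose $r_0>r$ with $r_0\to 0$ so that $B_{r_0}(\cP_n)$ covers $M$ w.h.p., giving $\cC_{r_0}\simeq M$ by the Nerve Lemma; absence of index-$k,k+1$ critical points in $(r,r_0]$ then gives $H_k(\cC_r)\cong H_k(\cC_{r_0})\cong H_k(M)$. Without picking $r_0$ and handling the non-coverage event separately (the paper bounds $\beta_k$ crudely by the number of $k$-faces there), the argument does not land at $H_k(M)$.

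Third, the lower bound's ``isolated empty caps whose boundary Čech complex is a $k$-sphere'' does not identify the correct obstruction. The paper's $\Theta$-cycles are configurations $\cY$ of $k+1$ points generating an index-$k$ critical point, together with the requirement that the \emph{annulus} $A_\phi(\cY)$ around the critical point be \emph{covered} by the remaining process points while the central $k$-face stays uncovered; it is the covered annulus that forces the uncovered face to represent a genuinely new nontrivial $H_k$ class, and one further needs to tune radii $r_1<r<r_2$ so the class persists at radius $r$. Spacing configurations at distance $\gg r$ is a device for the second-moment independence, not for the cycle creation itself. Finally, your chart-partition route for the volume comparison is plausible but is not what the paper does; the paper instead proves a Riemannian Blaschke--Petkantschin change of variables centered at the critical point and uniform normal-coordinate volume expansions ($\dvol_g = (1+O(r^2))\dvol_{g_E}$), which avoids boundary-of-chart bookkeeping. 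Your order-of-error concern (relative volume errors must be $o(1/\log n)$, equivalently $\Lambda r^2\to 0$) is correct and is exactly the condition the paper imposes.
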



Notice that in \cite{bobrowski_vanishing_2015} we further required that $w(n)\gg\log\log\log n$. The generalized proof we use in this paper allows us to avoid this condition.
While the result in Theorem \ref{thm:main} is not tight, it does provide a good estimate for the exact  threshold. For instance, we can deduce that the exact threshold for $H_k$ occurs at $\Lambda = \log n + \alpha_k\log\log n$, with $\alpha_k \in [k-2,k]$ (see the discussion in Section \ref{sec:disc}).
Notice that there are two homology groups which are not covered by this theorem - $H_0$ and $H_d$. For $H_0$, which describes the connectivity of the underlying random geometric graph, it is known that the threshold occurs around $\Lambda = 2^{-d}\log n$.\footnote{This result is proved only for the torus, but similar techniques as we use in this paper could be used to extend it to the general Riemannian case as well.} As for $H_d$, using the Nerve Lemma \ref{lem:nerve} and coverage arguments \cite{flatto_random_1977}, the threshold can be shown to be $\Lambda = \log n + (d-1)\log\log n$. 

Comparing to the results on combinatorial complexes in \cite{kahle_sharp_2014,linial_homological_2006}, the main difference here is that  the phase transition for connectivity cannot be considered as a special case of the higher dimensional homological connectivity, but rather occurs much earlier. Thus, we observe two main stages - the first one is for connectivity ($H_0$), and the second one for all other homology groups ($H_k,\ k\ge 1$). Within the second stage we observe that the different homologies vanish in an orderly fashion. These observations are discussed in detail in \cite{bobrowski_vanishing_2015}.


\subsubsection*{Outline of the proof}


The proof of Theorem \ref{thm:main}, has a similar outline to the one in \cite{bobrowski_vanishing_2015}, but with considerable geometric adjustments required for the Riemannian case. In fact, the approach we use here for addressing the general setting turns out to be powerful by allowing us to (a) weaken some of the conditions required in \cite{bobrowski_vanishing_2015}, and (b)
prove many other statements for random \cech complexes in the Riemannian setting.

In the remaining of this section we wish to outline the steps required for the proof. The first of these is based on the following Pro
 about the expected Betti numbers of $\cC(n,r)$, denoted by  $\beta_k(r)$. This is a Riemannian analogue of Proposition 5.2 in \cite{bobrowski_vanishing_2015}.


\begin{prop}\label{prop:main}
Let $\Lambda\to\infty$ and $r\to 0$, in such a way that $\Lambda r \to 0$. Then, for every $1\le k \le d-1$ there exist constants $a_k,b_k$ (that depend on the metric $g$ as well), such that 
\[
a_k n\Lambda^{k-2} e^{-\Lambda} \le \mean{\beta_k(r)} \le \beta_k(M) + b_k n \Lambda^k e^{-\Lambda}.
\]
\end{prop}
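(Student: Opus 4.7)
The plan is to prove the two bounds separately, in each case reducing to an Euclidean-like asymptotic computation via geodesic normal coordinates and the Slivnyak--Mecke formula. For the \textbf{upper bound}, I would first apply the Nerve Lemma~\ref{lem:nerve} (which holds since $r\to 0$ is eventually below the convexity radius of $M$) to replace $\cC(n,r)$ by the union $U_r := \bigcup_{p\in\cP_n} B_r(p)$ of geodesic balls. Next, I would run Morse theory on the distance function $\rho(x):=d(x,\cP_n)$, whose sublevel set $\{\rho\le r\}$ coincides with $U_r$; critical points of $\rho$ at level $\rho_0$ are indexed by subsets $Y\subset\cP_n$ of size $k+1$ whose smallest enclosing geodesic ball has radius $\rho_0$, with center lying in their geodesic convex hull. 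Morse inequalities yield $\beta_k(U_r)\le\beta_k(M)+\tN_k(r)$, where $\tN_k(r)$ counts the ``random'' index-$k$ critical points (those not coming from the intrinsic topology of $M$). The Slivnyak--Mecke formula expresses $\mean{\tN_k(r)}$ as $\tfrac{n^{k+1}}{(k+1)!}$ times an integral over $M^{k+1}$ of an indicator of the Morse condition weighted by the void probability $\exp(-n\vol(B_r(c)))$, where $c$ is the circumscribing center. Localizing via geodesic normal coordinates at one of the points and rescaling by $r$, the Riemannian distance and volume form differ from their Euclidean counterparts by $O(r^2)$, so the integral reduces to the Euclidean one computed in \cite{bobrowski_vanishing_2015} times $\vol(M)=1$, giving $\mean{\tN_k(r)}\le b_k n\Lambda^k e^{-\Lambda}(1+o(1))$.

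For the \textbf{lower bound}, I would follow the strategy of \cite{bobrowski_vanishing_2015} and single out a specific family of ``minimal'' random configurations, each of which is forced to contribute at least one unit to $\beta_k$. The factor $n\Lambda^{k-2}e^{-\Lambda}$ suggests an anchor point together with $k-2$ further points in an $O(r)$-neighborhood (producing $n\Lambda^{k-2}$ via Mecke) and an emptiness condition on a suitable surrounding region (producing the $e^{-\Lambda}$). The combinatorial heart is a deterministic lemma showing that every such configuration forces a non-bounding $k$-cycle in $\cC(n,r)$; this rests on a local topological argument carried out inside a single geodesic normal chart, using the Nerve Lemma to relate the \cech complex there to a union of Euclidean balls and the emptiness hypothesis to rule out cancellation by nearby simplices. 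Once the configurations are identified, a Slivnyak--Mecke calculation localized to normal coordinates yields the stated lower bound, with constant $a_k$ determined by the same local Euclidean integral.

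The \textbf{main obstacle} I foresee is the uniform passage from the manifold to its tangent-space model at scale $r$. On the flat torus of \cite{bobrowski_vanishing_2015} the geometry is literally Euclidean, so the Morse characterization of critical points, the volume form, and the distance function all coincide with their Euclidean counterparts; on a general $M$ these differ by curvature-driven terms of order $r^2$, and one has to show that the corrections are uniformly negligible across $M$ and across all configurations contributing to the integrals. Compactness of $M$ provides uniform positive lower bounds on the injectivity and convexity radii and uniform upper bounds on curvature, but one still needs to redo the combinatorial-geometric characterization of index-$k$ critical points of the distance function on $M$ (in particular the ``common center lies in the convex hull'' condition, formulated via the exponential map) and verify its stability under small curvature perturbations. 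A secondary technical point is the bookkeeping required to separate the intrinsic topology of $M$ (contributing $\beta_k(M)$) from the random critical points, a separation which is automatic in the toral case but requires explicit accounting on a manifold with nontrivial topology.
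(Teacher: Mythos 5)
Your outline captures the high-level skeleton (Morse theory plus Blaschke--Petkantschin localization for the upper bound; a forced-cycle construction plus Mecke for the lower bound), but two of the central mechanisms are misidentified in ways that would cause the argument to fail if carried out literally.

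\textbf{Upper bound.} You write that ``Morse inequalities yield $\beta_k(U_r)\le\beta_k(M)+\tN_k(r)$, where $\tN_k(r)$ counts the `random' index-$k$ critical points.'' The inequality actually used (Lemma~\ref{lem:MorseInequalities}) comes from the long exact sequence of the pair $(M,M_a)$ and reads $\beta_k(M_a)-\beta_k(M)\le C_{k+1}(a,+\infty)$: it is critical points of index $k+1$ \emph{above} level $r$ that control the excess of $\beta_k$ over $\beta_k(M)$. This is not cosmetic. The number of index-$k$ critical points with level $\le r$ is $\Theta(n)$ when $\Lambda\to\infty$ (most such critical points occur at small radii), far larger than the target $n\Lambda^k e^{-\Lambda}$, and there is no clean way to isolate the ``random'' ones. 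The paper makes the index-$(k+1)$ count finite by conditioning on the coverage event $E=\{B_{r_0}(\cP_n)=M\}$ at an auxiliary radius $r_0\gg r$, so that on $E$ all critical points lie below $r_0$ and only $C_{k+1}(r,r_0)$ need be counted, with the complementary event $E^c$ handled by a crude simplex-count bound. This conditioning step, which your sketch omits, is where the otherwise-divergent critical-point count is tamed.

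\textbf{Lower bound.} You attribute the factor $n\Lambda^{k-2}e^{-\Lambda}$ to ``an anchor point together with $k-2$ further points in an $O(r)$-neighborhood.'' That is not where the exponent comes from, and a configuration of $k-1$ points cannot carry a nontrivial $k$-cycle in the first place. The relevant configurations ($\Theta$-cycles) consist of $k+1$ points forming an index-$k$ critical point, which via Mecke produces $n\Lambda^k e^{-\Lambda}$ as in the upper bound. The two lost powers of $\Lambda$ arise from a technical shrinking of the admissible radius window: to ensure the resulting cycle both persists to radius $r$ (Lemma~\ref{lem:Betas}) and has a controllable void probability, one takes $r_1=r(1-O(\Lambda^{-2}))$ and $r_2=r(1+\Lambda^{-1})$, and the integral over $\rho(\cY)\in(r_1,r]$ yields an extra $\Lambda^{-2}$. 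This also explains why the $k-2$ exponent in the lower bound is not believed to be sharp (cf.\ the conjecture in Section~\ref{sec:disc}); your proposal, by hard-coding $k-2$ into the combinatorics, would obscure this slack rather than expose it. The rest of your localization-via-normal-coordinates discussion and your identification of the main analytic obstacle (uniform $O(r^2)$ curvature corrections, handled via compactness) are in line with the paper.
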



Notice that the original version in \cite{bobrowski_vanishing_2015}, stated  for the flat torus, does not require the assumption that $\Lambda r \to 0$. Even though this condition is necessary to extend the statements to the non-flat case, it does not affect the result in any of the radii of interest for us, since for $\Lambda \gg \log n$ the manifold is covered with high probability which implies that $\cC(n,r) \simeq M$.

The proof of proposition \ref{prop:main} will be split between Section \ref{sec:upper} for the upper bound, and Section \ref{sec:lower} for the lower bound. 
The proof of the upper bound makes use of a special variant of  Morse theory for the distance function, which we will discuss in detail later. The main idea is to use the Morse inequalities to obtain an upper bound on the Betti numbers $\beta_k(r)$ from the number of index $k$ critical points of an appropriate Morse function. To prove the lower bound, we search for special configurations named ``$\Theta$-cycles'' (introduced in \cite{bobrowski_vanishing_2015}), which are guaranteed to generate nontrivial $k$-cycles in homology. Then, by counting these we obtain a lower bound on $\beta_k(r)$.
The proof of Proposition \ref{prop:main} also covers the upper part of the phase transition in Theorem \ref{thm:main}. To prove the lower part, in addition to the first moment bound provided in proposition \ref{prop:main}, we need to use a second moment argument, which will be discussed in Section \ref{sec:second_moment}. In Section \ref{sec:proof} we will put together all the parts of the proof for Theorem \ref{thm:main}.
Before we get to the proofs, we need to provide various statements that will allow us to carry out the calculations in the general Riemannian setting. This will be done in Sections 2-5.


\section{Preliminaries}


\subsection{Homology}


In this section we introduce the concept of homology in an intuitive rather than a rigorous way. A comprehensive introduction to the topic can be found in \cite{hatcher_algebraic_2002,munkres_elements_1984}. The reader familiar with the fundamentals of algebraic topology is welcome to skip to section \ref{ss:TheCechComplex}.

Let $X$ be a topological space, the \textit{homology} of $X$ is a sequence of abelian groups denoted  $\set{H_i(X)}_{i=0}^d$. 
In this paper, we will assume that homology computed with field coefficients, and then the homology groups $H_*(X)$ are in fact vector spaces. This sequence of vector spaces encapsulates topological information about $X$ in the following way. The basis elements of $H_0(X)$ correspond to the connected components of $X$, and for $k\ge 1$ the basis elements of $H_k(X)$ correspond to (nontrivial) $k$-dimensional cycles. Loosely speaking, a nontrivial $k$-dimensional cycle in a manifold $M$ can be thought of as the boundary of a $k+1$-dimensional solid, such that the interior of the solid is not part of  $M$. The Betti numbers are defined as $\beta_k(X) = \dim(H_k(X))$, namely they are the number of linearly independent $k$-dimensional cycles in $X$.

For example, for the $d$-dimensional sphere $\S^d$ we have $\beta_0(\S^d) = \beta_d(\S^d) = 1$, while $\beta_k(\S^d) = 0$ for $k\ne 0,d$. Another example is the $2$-dimensional torus $\T^2$ that has $\beta_0(\T^2) = 1$ (a single component), $\beta_1(\T^d)=2$ (two essential ``loops''), and $\beta_2(\T^2) = 1$ (the boundary of the 3D solid). See Figure \ref{fig:torus}

\begin{figure}[h]
\centering
\includegraphics[scale=0.4]{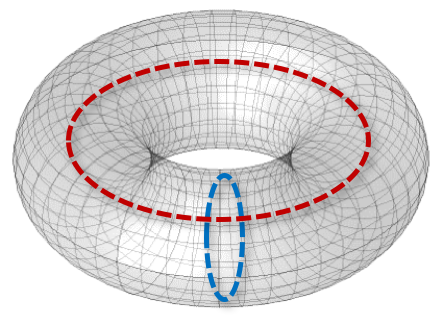}
\caption{\label{fig:torus}  The 2D torus $\T^2$. We observe the two loops generating $H_1$, while the entire 2D surface generates $H_2$.}
\end{figure}


\subsection{The \cech complex}\label{ss:TheCechComplex}

The \cech complex we study in this paper, is an abstract simplicial complex constructed from a finite set of points in a metric space in the following way.


\begin{defn}\label{def:cech_complex}
Let $\cP = \set{p_1,p_2,\ldots,p_n}$ be a collection of points in a metric space, and let $r>0$ and let $B_r(x)$ be the ball of radius $r$ around $x$. The \cech complex $\cC_r(\cP)$ is constructed as follows:
\begin{enumerate}
\item The $0$-simplexes (vertices) are the points in $\cP$.
\item A $k$-simplex $[p_{i_0},\ldots,p_{i_k}]$ is in $\cC_r(\cP)$ if $\bigcap_{j=0}^{k} {B_{r}(p_{i_j})} \ne \emptyset$.

\end{enumerate}
\end{defn}



\begin{figure}[h]
\centering

\includegraphics[scale=0.4]{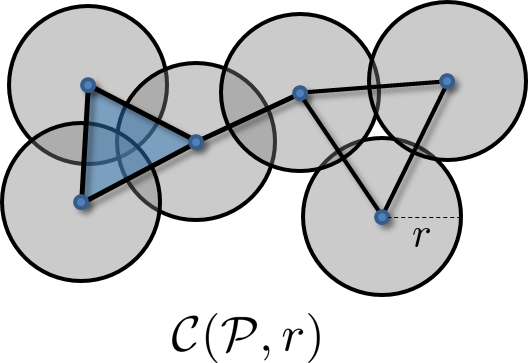}
\caption{\label{fig:cech}  A \cech complex generated by a set of points in $\R^2$. The complex has 6 vertices (0-simplexes), 7 edges (1-simplexes) and a single triangle (a 2-simplex). }
\end{figure}

Associated with the \cech complex $\cC_r(\cP)$ is the union of balls used to generate it (in the underlying metric space), which we define as
\begin{equation}\label{eq:union_balls}
	B_r(\cP) = \bigcup_{p\in \cP}B_r(p).
\end{equation}
The spaces $\cC_r(\cP)$ and $B_r(\cP)$ are of a completely different nature. Nevertheless, the following lemma claims that they are very similar in the topological sense.
This lemma is a special case of a more general topological statement originated in \cite{borsuk_imbedding_1948} and commonly referred to as the `Nerve Lemma'.


\begin{lem}[The Nerve Lemma]\label{lem:nerve}
Let $\cC_r(\cP)$ and $B_r(\cP)$ as defined above. If for every $p_{i_1},\ldots,p_{i_k}$ the intersection $B_r(p_{i_1})\cap \cdots\cap B_r(p_{i_k})$ is either empty or contractible (homotopy equivalent to a point), then $\cC_r(\cP)\simeq B_r(\cP)$, and in particular,
\[
	H_k(\cC_r(\cP)) \cong H_k(B_r(\cP)),\quad \forall k\ge 0.
\]
\end{lem}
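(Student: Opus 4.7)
The plan is to prove this as a direct special case of the classical Nerve Theorem of Borsuk \cite{borsuk_imbedding_1948}: whenever a paracompact space admits a ``good'' open cover (one where every nonempty finite intersection is contractible), the geometric realization of the nerve is homotopy equivalent to the space. The hypothesis of the lemma states exactly that $\{B_r(p_i)\}_{i=1}^n$ forms such a good cover of $B_r(\cP)$, so $\cC_r(\cP) \simeq B_r(\cP)$ is immediate, and the displayed homology isomorphism follows from homotopy invariance of singular homology.

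For completeness, I would sketch the standard proof via the Mayer--Vietoris blow-up. Writing $U := B_r(\cP)$ and $N := |\cC_r(\cP)|$, and letting $V(t) \subset \cP$ denote the vertex set of the unique open simplex of $N$ containing $t$, define
\[
\Delta \;:=\; \bigl\{ (x,t) \in U \times N : x \in B_r(p) \text{ for every } p \in V(t) \bigr\},
\]
together with the coordinate projections $\pi_U \colon \Delta \to U$ and $\pi_N \colon \Delta \to N$. The fiber of $\pi_N$ over $t$ is the intersection $\bigcap_{p \in V(t)} B_r(p)$, which is contractible by hypothesis. The fiber of $\pi_U$ over $x$ is the geometric realization of the full subcomplex of $\cC_r(\cP)$ spanned by $\{p_i : x \in B_r(p_i)\}$; since $x$ lies in every sub-intersection of these balls, every subset of these vertices spans a simplex of $\cC_r(\cP)$, so the fiber is a single closed simplex, hence contractible.

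The main obstacle is upgrading fiberwise contractibility to a genuine homotopy equivalence, since neither projection need be a fibration in any reasonable sense. I would handle this via a partition of unity $\{\psi_i\}_{i=1}^n$ subordinate to $\{B_r(p_i)\}$: the map $x \mapsto \sum_i \psi_i(x) p_i$, viewed in barycentric coordinates on $N$, has graph contained in $\Delta$ by construction, and one builds explicit straight-line homotopies within each simplex of $N$ to deformation-retract $\Delta$ onto this graph, producing a homotopy inverse for $\pi_U$; a symmetric argument, using that each intersection $\bigcap_{p \in V(t)} B_r(p)$ admits a contraction, handles $\pi_N$. Alternatively, one can invoke a ready-made formulation such as Corollary~4G.3 in \cite{hatcher_algebraic_2002}, which delivers the conclusion directly under exactly the stated good-cover hypothesis; in the applications made later in the paper, the contractibility hypothesis will be verified geometrically using convexity of small metric balls in $M$ (Section 3), so that this lemma may be applied without further comment.
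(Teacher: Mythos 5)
The paper does not prove this lemma: it cites it as a classical result going back to Borsuk \cite{borsuk_imbedding_1948} and moves on. Your proposal therefore goes beyond the paper, but the route you sketch is the standard one (the Mayer--Vietoris blow-up / homotopy colimit argument, as in Corollary 4G.3 of \cite{hatcher_algebraic_2002}), and it is correct in substance; identifying the right ready-made reference and understanding how the good-cover hypothesis is verified later is exactly the role this lemma plays in the paper.

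One technical point you should watch: your partition-of-unity construction implicitly requires an \emph{open} cover, since partitions of unity are subordinate to open covers, whereas the paper's convention (Section 3.2) is that $B_r(p)$ is the \emph{closed} ball. The nerve theorem does hold for finite closed covers of a paracompact space by contractible pieces with contractible intersections, provided the cover is suitably regular (which metric balls of small radius on a compact Riemannian manifold are), but the textbook statement in Hatcher is for open covers. The standard fixes are either to invoke the closed-cover variant directly, or to observe that for all but finitely many $r$ the complexes $\cC_r(\cP)$ and $\cC_{r+\eps}(\cP)$ agree for small $\eps>0$ and then apply the open-ball nerve theorem at radius $r+\eps$; a sentence to that effect would make the proposal airtight.
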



Consequently, for sufficiently small $r$, we will sometimes be using $B_r(\cP)$ to make statements about $\cC_r(\cP)$. This will be very useful especially when coverage arguments are available. Note that in Figure \ref{fig:cech} indeed both $\cC_r(\cP)$ and $B_r(\cP)$ have a single connected component and a single hole.

\subsection{The Poisson process}\label{sec:Poisson}
In order to make a \cech complex $\cC_r(\cP)$ random, we generate the point set $\cP$ at random. In this paper we will use the model of a homogeneous Poisson process. Let $M$ be a compact Riemannian manifold, and let $X_1,X_2,\ldots$, be a sequence of $\iid$ (independent and identically distributed) random variables, distributed uniformly on $M$ with respect to the Riemannian volume measure. Let $N\sim \pois{n}$ be a Poisson random variable, independent of the $X_i$-s, and define the spatial Poisson process $\cP_n$ as
\[
\cP_n = \set{X_1,X_2,\ldots, X_N}.
\]
In other words, we generate a random number $N$ and then pick the first $N$ random points generated on $M$. Notice that we have $\mean{\abs{\cP_n}} = n$, so while the number of points is random its expected value is $n$.
For every subset $A\subset M$ we define $\cP_n(A) = \abs{\cP_n\cap A}$, i.e.~the number of points lying in $A$.
The process $\cP_n$ can be equivalently defined as the process that satisfies the following properties:
\begin{enumerate}
\item For every subset $A\subset M$ the number of points in $A$ has a Poisson distribution. More specifically: $\cP_n(A) \sim \pois{n\vol(A) / \vol(M)}$.

\item If $A,B\subset M$ are two disjoint sets ($A\cap B = \emptyset$) then the variables $\cP_n(A),\cP_n(B)$ are independent.
\end{enumerate}

The last property of the Poisson process is known as ``spatial independence'' and it is the main reason why the Poisson process such a convenient model to analyze.

In this paper we study the random \cech complex $\cC(n,r) := \cC_r(\cP_n)$. To shorten notation, from here on we will use $\cC_r$ to refer to $\cC(n,r)$. As mentioned earlier, throughout the paper we will assume that $\vol(M)=1$, to simplify the calculations.	


\section{Riemannian Geometry Ingredients}\label{sec:Preliminaries}

In this section we wish to provide the reader with a brief review of Riemannian geometry, which will be used throughout this paper. There are many good textbooks on the subject, for a comprehensive introduction see for example \cite{doCarmo}, \cite{Peterson}, or \cite{Kobayashi}.


\subsection{A quick intro to Riemannian geometry}

A Riemannian manifold is a pair $(M,g)$, where $M$ is a smooth manifold and a smoothly varying metric $g:T_pM\times T_pM\to \R$, where $T_pM$ is the tangent space to $M$ at a point $p$. The metric yields a smoothly varying inner product in the tangent space $T_pM$ at any point $p \in M$. Hence, it can be used to define the norm of a tangent vector, as well as angles between tangent vectors at the same point. Using the notion of norm $\abs{ v } = \sqrt{g_p(v,v)}$ of a vector $v \in T_pM$ one can define the length $\ell(\gamma)$ of a path $\gamma : I \subset \mathbb{R} \to M$ by simply integrating its velocity, i.e. 
$$\ell(\gamma) = \int_I \abs{ \dot{\gamma} (t) } \ dt ,$$
where $\dot{\gamma}(t) \in T_{\gamma(t)} M$ denotes the tangent vector to $\gamma$ at $\gamma(t)$, for $t \in I$. Given two points $p,q \in M$ one can define the distance $\dist( p , q)$ as the infimum of the length over all paths connecting $p$ and $q$. A Riemannian manifold is called \emph{ complete} if for any two points $p,q \in M$ there is a curve connecting them which minimizes the distance, i.e.~the infimum length is achieved. These minimizing  curves are called \emph{geodesics}. Notice that every compact manifold is also complete, and moreover, by the theorem of Hopf and Rinow \cite{HopfRinow} it is also a complete metric space. Hence, the definitions of the Poisson process and the \cech complex described above are valid.

The local invariants of a Riemannian metric $g$ are encoded in its curvature. We shall now introduce this in a way that will be useful for us later. Given a point $p \in M$ one can consider the geodesics that pass through $p$ at time $t=0$ with velocity vector a given vector $v \in T_pM$. The geodesic equations are a system of second order  differential equations and a simple application of Picard's existence and uniqueness theorem shows that these geodesic are unique, and we denote them by $\gamma_v(t)$. We can then consider a map $\exp_p:T_pM \rightarrow M$, called the exponential map at $p$, which assigns to each vector $v \in T_pM$ the position at which the unique geodesic starting at $p$ with velocity $v$ is at time $1$, i.e. $\exp_p(v)=\gamma_v(1)$. The derivative of $\exp_p$ at the origin in $T_pM$ is the identity and so, by the inverse function theorem, $\exp_p$ is a local diffeomorphism of a small ball around $0 \in T_pM$ to a small neighborhood of $p \in M$. As a consequence, one can use the exponential map to define local coordinates around $p$. For instance, fixing an orthonormal basis for $T_pM$ one obtains coordinates on $T_pM$, which  can be regarded as local coordinates $(x^1,...,x^d)$ around $p$. These are the so called \emph{geodesic normal coordinates} and have the following properties. The point $p$ corresponds in this coordinates to the point $(0, \ldots , 0)$. Any geodesic through $p$ corresponds to a straight line passing through the origin $(0,...,0)$ and intersects the spheres $S_r(p)= \lbrace (x^1, ... , x^d) \ \vert  (x^1)^2+...+(x^d)^2=r^2 \rbrace$ orthogonally. In these coordinates, the metric can be written as $g=g_{ij} dx^i \otimes dx^j$, with
\begin{equation}\label{eq:NormalCoords}
g_{ij}= \delta_{ij}+ \frac{1}{3} R_{iklj}x^k x^l + O(\abs{ x }^3),
\end{equation}
where $\delta_{ij}$ is the Kronecker delta.
The second order terms, i.e. $R_{iklj}$, in the Taylor expansion above, form the \emph{Riemann curvature tensor} at $p$. 
We note that here, as well as later in the paper, we use  Einstein's notation for summation, where by $a_i b^i$ we mean the sum $\sum_i a_i b^i$.
We refer the reader to any textbook on Riemannian geometry for all that was described above. In what follows we shall give a self contained exposition of all the geometric input needed. This will be mostly derived in the rest of this section and the appendix.


\subsection{Notation}


In this paper we restrict to the case when $(M,g)$ is a compact Riemannian manifold. For convenience, we will sometimes use $\langle \cdot , \cdot \rangle_p$ to denote  the inner product of tangent vectors at a point $p \in M$ (instead of $g$). Next, for $\cP\subset M$ and $r>0$, we define the following:
\begin{itemize}
\item $B_r(p) =$ the closed ball of radius $r$ around $p$.
\item $S_r(p) =$ the sphere of radius $r$ around $p$.
\item $B_r(\cP) = \bigcup_{p\in \cP} B_r(p)$ - the union of  balls.
\item $B_r^\cap(\cP) = \bigcap_{p\in \cP} B_r(p)$ - the intersection of balls.
\end{itemize}


\subsection{Approximations of Riemannian volumes}\label{sec:ineqs}


Throughout the proofs in this paper, we will need to approximate volumes for subsets of Riemannian manifolds, and compare them to their Euclidean counterparts. In this section we introduce the Riemannian normal coordinates and use them to provide the main inequalities that we will use for that purpose. The proofs for the statements in this section appear in Appendix A. For an exposition of the background leading to these see for example \cite{Kobayashi,Peterson}. A  particularly nice exposition of Riemannian normal coordinates can be found in \cite{Viaclovsky}.

Let $p \in M$ and $(x^1,...,x^d)$ be geodesic normal coordinates centered at $p$. Then, in the domain of definition of these coordinates the metric can be written as $g=g_{ij} dx^i \otimes dx^j$, with $g_{ij}$ as in  \eqref{eq:NormalCoords}. Then, there is a canonical measure on $M$ induced by the Riemannian density which is given by
$$\abs{ \dvol_g } = \sqrt{ \abs{ \det(g_{ij}) } }  \abs{ \dvol_{g_E} },$$
where $\abs{ \dvol_{g_E} }= \abs{ dx^1\wedge ... \wedge dx^d }= dx_1 \ldots dx_n$ is the density associated with the Euclidean metric with $g_E=\delta_{ij} dx^i \otimes dx^j$. For more on densities in Riemannian manifolds see, for example, pages 304--306 in \cite{Lang}.
In this section we will prove various inequalities that relate geometric quantities associated with $g$ to those associated with $g_E$. The first  ingredient we need is the computation of the Riemannian density $\sqrt{ \abs{ \det(g_{ij}) }}$ using equation \eqref{eq:NormalCoords},

\begin{equation} \label{eq:RiemannianMeasure}
\sqrt{ \abs{ \det(g_{ij}) }} =   1- \frac{Ric_{ij}}{3} x^i x^j + O(\abs{ x }^3),
\end{equation}
where  $Ric_{ij}=-\sum_k R_{ikkj}$ is known as the \emph{Ricci curvature tensor} at $p$. Using this expression for the Riemannian density it is easy to show that the volume of a normal ball $B_r(p)=\lbrace (x^1, ... , x^d) \ \vert \  (x^1)^2+...+(x^d)^2 \leq r^2 \rbrace$ can be written as:

\begin{equation}\label{eq:ball_vol}
\vol(B_r(p)) = \omega_d r^d \left( 1- \frac{s(p)}{6(d+2)}r^2 + O(r^3) \right),
\end{equation}
where $\omega_d$ is the volume of an Euclidean $d$-dimensional ball of radius $1$, and $s(p)$ denotes the \emph{scalar curvature} $s(p)= \sum_i Ric_{ii}$ at $p$. Similarly, one can compute the volume of a normal sphere to be 

\begin{equation}\label{eq:sphere_vol}
\vol(S_r(p)) = d \omega_d r^{d-1} \left( 1- \frac{s(p)}{6d}r^2 + O(r^3) \right).
\end{equation}

In the following, we will make use of equations \eqref{eq:ball_vol}--\eqref{eq:sphere_vol} in order to get bounds on these volumes for small values of $r$.
Notice that we can write $\abs{ \dvol_g }$ in polar coordinates as $\abs{ \dvol_g } =  dr \abs{ \dvol_{S_r(p)} }$, where $\dvol_{S_r(p)}$ denotes the volume form of the induced metric on the  $S_r(p)$, and by $\abs{ \dvol_{S_r(p)} }$ we simply mean that we regard it as a density. In the Euclidean case, $\dvol_{g_E}=r^{d-1} dr \wedge \dvol_{{\mathbb{S}^{d-1}}}$, where ${\mathbb{S}^{d-1}}$ is  the unit round sphere. The following result compares $\dvol_{{S_r(p)}}$ with $r^{d-1} \dvol_{{\mathbb{S}^{d-1}}}$, for a given Riemannian metric $g$. 


\begin{lem}\label{lem:approx_sphere_vol}
Let $(M,g)$ be a compact Riemannian manifold. Denote by $\abs{ Ric_p } = \sup_{v \in T_pM \backslash 0} \frac{ \abs{ Ric(v,v) }}{\abs{ v }^2}$ the norm of the Ricci tensor at $p \in M$. Then, for any $\nu>0$ there exists $r_{\nu}>0$ such that for all $r \leq r_{\nu}$ and for all $p \in M$, we have
$$r^{d-1} \left( 1 - \frac{{\abs{ Ric_p } }+ \nu}{3}\cdot r^2 \right) \abs{ \dvol_{{\mathbb{S}^{d-1}}} } \leq \abs{ \dvol_{{S_r(p)}}  } \leq r^{d-1} \left( 1 + \frac{\abs{ Ric_p } + \nu}{3}\cdot r^2 \right) \abs{ \dvol_{{\mathbb{S}^{d-1}}} },$$
on $B_{r}(p)$. Moreover, $r_{\nu}$  depends continuously on $\nu$.
\end{lem}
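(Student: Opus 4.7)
The plan is to work in geodesic normal coordinates centered at $p$, in which the geodesic ball $B_r(p)$ coincides with the Euclidean coordinate ball of radius $r$, and by Gauss's lemma the radial geodesics meet every geodesic sphere $S_r(p)$ orthogonally. Passing to polar coordinates $x = r\theta$ with $\theta \in \mathbb{S}^{d-1}$, I would express the Riemannian density in two compatible ways:
$$\abs{\dvol_g} = \sqrt{\abs{\det g_{ij}(x)}}\, r^{d-1}\, dr \wedge \abs{\dvol_{\mathbb{S}^{d-1}}} = dr \wedge \abs{\dvol_{S_r(p)}},$$
the second equality being justified precisely by Gauss's lemma. Comparing the two yields the pointwise identity
$$\abs{\dvol_{S_r(p)}} = \sqrt{\abs{\det g_{ij}(r\theta)}}\, r^{d-1}\, \abs{\dvol_{\mathbb{S}^{d-1}}},$$
which reduces the lemma to a pointwise estimate on $\sqrt{\abs{\det g_{ij}}}$ restricted to $S_r(p)$.

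I would then substitute the expansion \eqref{eq:RiemannianMeasure} at $x = r\theta$ to obtain
$$\sqrt{\abs{\det g_{ij}(r\theta)}} = 1 - \frac{Ric_{ij}(p)}{3}\, \theta^i \theta^j\, r^2 + R(p, r, \theta),$$
where $R(p,r,\theta) = O(r^3)$ is the Taylor remainder. By definition of $\abs{Ric_p}$ the quadratic coefficient satisfies $\abs{Ric_{ij}(p)\theta^i\theta^j} \leq \abs{Ric_p}$ for every unit $\theta$, so up to the remainder $R$ the desired inequality already holds with $\abs{Ric_p}$ in place of $\abs{Ric_p} + \nu$. The remaining task is to absorb $R$ into the $\nu$-margin, uniformly in $p \in M$.

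Uniformity follows from compactness of $M$, which provides a strictly positive lower bound $\iota > 0$ on the injectivity radius (so that normal coordinates exist on $B_\iota(p)$ for every $p$) together with uniform bounds on the derivatives of $g$ that appear in the Taylor remainder. These give a constant $C > 0$, independent of $p$, with $\abs{R(p,r,\theta)} \leq Cr^3$ for all $r \leq \iota$ and all $\theta \in \mathbb{S}^{d-1}$. Given $\nu > 0$, I would set $r_\nu := \min(\iota,\, \nu/(3C))$; then for $r \leq r_\nu$ one has $Cr^3 \leq \frac{\nu}{3} r^2$, and absorbing this into the quadratic term replaces $\abs{Ric_p}$ by $\abs{Ric_p} + \nu$ in both directions simultaneously. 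The explicit formula for $r_\nu$ is manifestly continuous in $\nu$, which gives the final assertion.

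The only delicate point I anticipate is the uniform-in-$p$ control of the Taylor remainder in the expansion of $\sqrt{\abs{\det g_{ij}}}$. This amounts to uniform bounds on the first few derivatives of the metric in normal coordinates, equivalently on a covariant derivative of the curvature tensor; both reduce to compactness of $M$ together with smoothness of $g$, so no serious obstacle is expected, only care in packaging the remainder estimate so it is genuinely independent of the base point.
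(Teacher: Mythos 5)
Your proof is correct and takes essentially the same route as the paper's: express $\dvol_{S_r(p)}$ via the determinant factor $\sqrt{|\det g_{ij}|}$ in normal coordinates, substitute the expansion \eqref{eq:RiemannianMeasure}, bound the quadratic term by $|Ric_p|$ pointwise, and absorb the $O(r^3)$ remainder into the $\nu$-margin using a uniform bound from compactness. The one cosmetic difference is that the paper first defines a pointwise $r_\nu(p)$ (bounding the third-order coefficient via $\nabla Ric$, see Remark \ref{rem:Continuous_r_nu}) and then minimizes over $p$, whereas you package the remainder into a single global constant $C$ and set $r_\nu=\min(\iota,\nu/(3C))$ directly; these are equivalent, and your version makes the continuity of $r_\nu$ in $\nu$ slightly more immediate.
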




\begin{cor}\label{cor:Volume}
For $\nu>0$, let $s_{min}(\nu)= \inf_{p \in M} \frac{s(c)}{6(d+2)}- \nu$ and $s_{max}(\nu)= \sup_{p \in M} \frac{s(c)}{6(d+2)}+ \nu$. Then, for all $\nu>0$ there exists $r_{\nu}>0$, such that for all $r \leq r_{\nu}$
$$\omega_d r^d \left( 1 - s_{max}(\nu) r^2 \right) \leq \vol(B_r(p)) \leq  \omega_d r^d \left( 1 - s_{min}(\nu) r^2  \right).$$
\end{cor}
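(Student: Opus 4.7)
The plan is to derive the two-sided bound directly from the pointwise asymptotic expansion \eqref{eq:ball_vol}, upgrade the $O(r^3)$ remainder to a bound that is uniform in $p$ using the compactness of $M$, and then absorb that remainder into the $\nu r^2$ slack that is built into the definitions of $s_{max}(\nu)$ and $s_{min}(\nu)$.

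More concretely, for each $p\in M$ I would write
$$\vol(B_r(p)) = \omega_d r^d\left(1 - \frac{s(p)}{6(d+2)}r^2 + E(r,p)\right),$$
with $E(r,p)=O(r^3)$, as in \eqref{eq:ball_vol}. The coefficients of the Taylor expansion of $\sqrt{|\det g_{ij}|}$ in normal coordinates are polynomial expressions in the Riemann tensor and its covariant derivatives at $p$. Since $M$ is compact, these depend continuously on $p$ and are therefore uniformly bounded; combined with a uniform positive lower bound on the injectivity radius (again by compactness), this gives constants $C>0$ and $r_0>0$ such that $|E(r,p)|\le Cr^3$ for every $p\in M$ and every $0<r\le r_0$.

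Given $\nu>0$, I would set $r_\nu := \min(r_0, \nu/C)$. Then for all $r\le r_\nu$ and every $p\in M$ we have $|E(r,p)|\le \nu r^2$, which gives
$$\omega_d r^d\left(1-\left(\frac{s(p)}{6(d+2)}+\nu\right)r^2\right) \le \vol(B_r(p)) \le \omega_d r^d\left(1-\left(\frac{s(p)}{6(d+2)}-\nu\right)r^2\right).$$
The right-hand side is maximized over $p\in M$ when $\tfrac{s(p)}{6(d+2)}-\nu$ is minimized, i.e.\ when it equals $s_{min}(\nu)=\inf_p \tfrac{s(p)}{6(d+2)}-\nu$; symmetrically, the left-hand side is minimized when $\tfrac{s(p)}{6(d+2)}+\nu$ is maximized, giving $s_{max}(\nu)$. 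This yields the claimed inequality.

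The only nontrivial step is the uniformity of the remainder $E(r,p)$, which is exactly the place where compactness of $M$ is essential. One might be tempted to deduce the corollary by integrating the two-sided sphere bound of Lemma \ref{lem:approx_sphere_vol} from $0$ to $r$, but that route produces the Ricci norm $|Ric_p|$ in the quadratic coefficient rather than the scalar curvature $s(p)$, and hence gives a strictly weaker statement. Using the expansion \eqref{eq:ball_vol} directly is what allows the trace of the Ricci tensor, namely the scalar curvature, to appear with the correct coefficient $1/[6(d+2)]$.
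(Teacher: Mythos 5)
Your proof is correct and, in fact, cleaner than the paper's own. You expand $\vol(B_r(p))$ directly from \eqref{eq:ball_vol}, upgrade the $O(r^3)$ remainder to one uniform in $p$ using compactness (uniform bounds on the curvature and its derivatives plus a uniform positive lower bound on the injectivity radius), and then choose $r_\nu$ small enough that the remainder is absorbed into the $\nu r^2$ slack built into $s_{\max}(\nu)$ and $s_{\min}(\nu)$. The paper's appendix proof is terser and routes through Lemma \ref{lem:RiemannianMeasureComparison}, introducing $\nu(p)=\abs{Ric(p)}+\delta$ and then taking $\nu=\min_{p}\nu(p)$ and $r_\nu=\max_{p}r_\nu(p)$; as written this does not obviously reproduce the scalar-curvature coefficient $s(p)/[6(d+2)]$ that appears in the statement, since the density bound it invokes is controlled by the Ricci \emph{norm} rather than by its trace, and the choice of $\max$ rather than $\min$ for $r_\nu$ is the wrong one for a bound uniform over $p$. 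You identify exactly this pitfall yourself when you observe that integrating Lemma \ref{lem:approx_sphere_vol} yields an $\abs{Ric_p}$-controlled, hence strictly weaker, inequality. Working from \eqref{eq:ball_vol} directly, as you do, is what makes the correct quadratic coefficient appear.
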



\begin{lem}\label{lem:RiemannianMeasureComparison}
Let $(M,g)$ be a compact Riemannian manifold. Then, for all $\nu>0$ there is $r_{\nu}>0$, such that for all $p\in M$ and $r \leq r_{\nu}$, we have
\[
(1- \nu  r^2 ) \abs{ \dvol_{g_E} } \leq \abs{ \dvol_g } \leq (1+ \nu r^2) \abs{ \dvol_{g_E} },
\]
on $B_{r}(p)$, for all $p \in M$.
\end{lem}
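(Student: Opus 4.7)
The plan is to reduce the statement to a pointwise bound on the Riemannian density $\sqrt{|\det g_{ij}|}$ expressed in geodesic normal coordinates centered at $p$, and then to apply the Taylor expansion \eqref{eq:RiemannianMeasure} together with uniform estimates coming from the compactness of $M$.

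First, by compactness of $M$, the injectivity radius of $(M,g)$ has a positive lower bound $r_{\mathrm{inj}}>0$, so for every $p\in M$ and every $r\le r_{\mathrm{inj}}$, geodesic normal coordinates $(x^1,\ldots,x^d)$ are defined on $B_r(p)$ and identify this ball with the Euclidean ball of the same radius in $T_pM$. In these coordinates, $|\dvol_{g_E}| = dx^1\cdots dx^d$ and $|\dvol_g| = \sqrt{|\det g_{ij}(x)|}\,dx^1\cdots dx^d$, so the lemma reduces to showing
\[
1 - \nu r^2 \;\le\; \sqrt{|\det g_{ij}(x)|} \;\le\; 1 + \nu r^2
\]
uniformly for all $p\in M$ and all $x$ with $|x|\le r$.

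Next, I would apply \eqref{eq:RiemannianMeasure} at $p$ to obtain
\[
\sqrt{|\det g_{ij}(x)|} \;=\; 1 - \tfrac{1}{3}\,Ric_{ij}(p)\,x^i x^j + R_p(x),
\]
where $R_p(x)$ is the Taylor remainder and satisfies $|R_p(x)|\le C_p |x|^3$ with $C_p$ controlled by the third and fourth derivatives of $g_{ij}$ in the normal chart at $p$. Because $g$ is smooth and $M$ is compact, these derivatives vary continuously in $p$ and can be majorized by a single constant; so I can take $C<\infty$ with $C_p\le C$ for every $p$. Likewise $R_{\max}:=\sup_{p\in M}|Ric_p|<\infty$ (the same compactness input used in Lemma \ref{lem:approx_sphere_vol}). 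Combining both bounds, on $B_r(p)$,
\[
\bigl|\sqrt{|\det g_{ij}(x)|} - 1\bigr| \;\le\; \tfrac{R_{\max}}{3}\,r^2 + C\,r^3 \;=\; \Bigl(\tfrac{R_{\max}}{3} + C\,r\Bigr)\,r^2.
\]

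Given $\nu>0$, it then suffices to choose $r_\nu\in(0,r_{\mathrm{inj}}]$ small enough that $\tfrac{R_{\max}}{3}+C\,r_\nu\le\nu$, which yields both inequalities of the lemma. The main obstacle is not the analytic estimate itself but ensuring uniformity of the remainder constant $C$ and of the Ricci norm $R_{\max}$ across all $p\in M$. Both follow by a standard Taylor-theorem argument that exploits the smoothness of $g$ and the compactness of $M$, and this is essentially the same uniformity step that was used in proving Lemma \ref{lem:approx_sphere_vol} and Corollary \ref{cor:Volume}; once it is in place, the rest of the proof is a direct combination of the Taylor bounds above.
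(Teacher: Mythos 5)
Your proposal takes the natural route (reduce to the Riemannian density in normal coordinates, Taylor expand via \eqref{eq:RiemannianMeasure}, make the remainder uniform by compactness), and the bound
$\bigl|\sqrt{|\det g_{ij}(x)|}-1\bigr|\le\bigl(\tfrac{R_{\max}}{3}+Cr\bigr)r^2$ is correct. The gap is in the final step: ``choose $r_\nu$ small enough that $\tfrac{R_{\max}}{3}+Cr_\nu\le\nu$'' is impossible whenever $\nu<\tfrac{R_{\max}}{3}$. For any manifold that is not Ricci-flat, $R_{\max}/3$ is a fixed positive constant that shrinking $r$ cannot make disappear, so your argument only yields the conclusion for $\nu>\sup_p|Ric_p|/3$, not for all $\nu>0$.

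This is really pointing at a problem with the statement rather than with your Taylor analysis: the quadratic term $-\tfrac{1}{3}Ric_{ij}x^ix^j$ is genuinely of order $r^2$ on $B_r(p)$, so $|\dvol_g|-|\dvol_{g_E}|$ is not $o(r^2)$ and the inequality as written cannot hold for arbitrarily small $\nu$ unless $Ric\equiv 0$. Both Lemma~\ref{lem:approx_sphere_vol} and Corollary~\ref{cor:Volume} are phrased differently: they keep the curvature term explicitly inside the bound, and $\nu$ only absorbs the cubic remainder. The intended (and correct) version of Lemma~\ref{lem:RiemannianMeasureComparison} should do the same, e.g.\ replace $\nu$ with $\tfrac{|Ric_p|}{3}+\nu$ (or simply assert the existence of a single metric-dependent $\nu$), which is indeed how the lemma is actually invoked later, where $\nu$ is just a fixed constant in the proofs of Lemmas~\ref{lem:I_0_Computations} and~\ref{lem:I_j_Computations}. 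Note also that Appendix~\ref{appendix1} explicitly lists proofs of Lemmas~\ref{lem:approx_sphere_vol}, \ref{lem:RiemannianBallsComparison} and Corollary~\ref{cor:Volume} but not of this lemma, so there is no paper proof to compare against; still, your write-up should not assert the impossible inequality $\tfrac{R_{\max}}{3}+Cr_\nu\le\nu$ for small $\nu$, but should instead carry the Ricci contribution through to the conclusion.
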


\noindent Note, that we can take $r_{\nu}$ in both lemmas above to be the same (by taking the minimum of the two).\\
The following lemma is an immediate consequence of the above.
The last comparison we will need is between the union of Riemannian balls and Euclidean ones centered at the same points. The statement requires a little bit of notation. For small $r>0$ and $p_1,p_2$ at distance at most $2r$ from each other, we let $p$ be the midpoint of the minimizing geodesic connecting $p_1$ to $p_2$. Next, fix the Riemannian normal coordinates $(x^1, \ldots , x^d)$ centered at $p$, which induce a Euclidean metric $g_E = \delta_{ij} dx^i \otimes dx^j$. In the domain of definition of $g_E$ we will use $B^E_s(q)$ to denote the $s$-ball centered at $q$, where $s$ is measured using the metric $g_E$. 


\begin{lem}\label{lem:RiemannianBallsComparison}
Let $(M,g)$ be a compact Riemannian manifold. Then, there exist $\nu>0$ and $r_{\nu}>0$ such that for every $r<r_\nu$ and any two points $p_1, p_2$ with $\dist(p_1,p_2)<2r$ we have,
$$\left ( B^E_{(1- \nu r)r}(p_1) \cup B^E_{(1- \nu r)r}(p_2) \right) \subset \left( B_{r}(p_1) \cup B_{r}(p_2) \right) \subset \left( B^E_{(1+ \nu r)r}(p_1) \cup B^E_{C(1+ \nu r)r}(p_2) \right).$$
\end{lem}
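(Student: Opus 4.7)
The plan is to work in geodesic normal coordinates centered at the midpoint $p$ of the minimizing geodesic joining $p_1$ and $p_2$, and to exploit the expansion \eqref{eq:NormalCoords} to compare Riemannian and Euclidean lengths of curves in this chart. By compactness of $M$, the Riemann tensor is uniformly bounded and the injectivity radius admits a positive lower bound; choosing $r_\nu$ smaller than one third of this lower bound guarantees that the normal coordinates around $p$ cover every point within Riemannian distance $2r$ of $p$, and in particular cover $B_r(p_1)\cup B_r(p_2)$ by the triangle inequality.

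In these coordinates $p_1,p_2$ correspond to vectors $v_1,v_2\in T_pM$ with $\abs{v_i}_{g_E}<r$. From \eqref{eq:NormalCoords}, and using the uniform curvature bound, there exists a constant $C>0$ depending only on $M$ such that the pointwise bilinear-form inequality
\[
(1-Cr^2)\,g_E \;\leq\; g \;\leq\; (1+Cr^2)\,g_E
\]
holds on $B^E_{2r}(0)\subset T_pM$. Integrating this inequality along any curve contained in the chart shows that its Riemannian and Euclidean arclengths differ at most by the multiplicative factor $\sqrt{1+Cr^2}$. Applied to the straight-line segment from $v_i$ to $x\in T_pM$, this gives $\dist_M(p_i,\exp_p(x))\leq \sqrt{1+Cr^2}\,\abs{x-v_i}_{g_E}$, while applied to the minimizing $M$-geodesic from $p_i$ to $q=\exp_p(x)$ (whose pullback to $T_pM$ has Euclidean length at least $\abs{x-v_i}_{g_E}$), it gives $\abs{x-v_i}_{g_E}\leq \dist_M(p_i,q)/\sqrt{1-Cr^2}$.

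The two pointwise inclusions follow at once. If $q\in B_r(p_i)$ then $\abs{x-v_i}_{g_E}\leq r/\sqrt{1-Cr^2}\leq r(1+Cr^2)$, which is at most $(1+\nu r)r$ for any fixed $\nu>0$ provided $r_\nu$ is taken smaller than $\nu/C$; this is the outer inclusion. Conversely, if $\abs{x-v_i}_{g_E}\leq (1-\nu r)r$ then $\dist_M(p_i,q)\leq \sqrt{1+Cr^2}\,(1-\nu r)r\leq r$ under the same choice of constants, which is the inner inclusion. Fixing any $\nu>0$ and $r_\nu$ small enough validates both inequalities for all $r<r_\nu$, uniformly in $p\in M$ thanks to compactness; taking unions over $i=1,2$ completes the proof.

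The main technical delicacy I anticipate lies in justifying that the minimizing geodesic from $p_i$ to $q\in B_r(p_i)$ stays inside the normal coordinate chart around $p$, since the metric comparison is only available there. This is handled by a short bootstrap: the same comparison on the slightly larger domain $B^E_{3r}(0)$ gives preliminary length control, and then the bound $\dist_M(p_i,q)\leq r$ combined with $\abs{v_i}_{g_E}<r$ confines the geodesic to $B^E_{2r}(0)$ by the triangle inequality, closing the argument.
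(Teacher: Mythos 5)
Your proposal is correct and follows essentially the same route as the paper: pass to geodesic normal coordinates at the midpoint $p$ of the minimizing geodesic between $p_1$ and $p_2$, use the expansion \eqref{eq:NormalCoords} to obtain a uniform two-sided metric comparison with $g_E$ on a ball of Euclidean radius $2r$, deduce a two-sided comparison of the Riemannian and Euclidean distance functions $\rho_{p_i}$ and $\rho_{p_i}^E$ from the length comparison, and conclude the ball inclusions; compactness of $M$ then makes the constants uniform. Your version is somewhat more explicit (in particular you carefully verify the chart containment of minimizing geodesics and note that the distance ratio is in fact $1+O(r^2)$, slightly sharper than the $1+O(s)$ stated in the paper, though of course both suffice for the lemma as stated).
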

Note that the values of $r_{\nu}$ in the previous two lemmas can be chosen independently of each other.


\section{Morse theory for the distance function}\label{sec:Morse}


In this section  we use the results in \cite{gershkovich_morse_1997} to develop the framework which will allow us to justify various Morse theoretic arguments later. For an introduction to Morse theory see \cite{milnor_morse_1963}. Briefly, developing a Morse theoretic framework will allow us to study the homology of the \cech complex by examining critical points for the corresponding distance function (defined below).

Recall that for $x,y\in M$ we define $\dist(x,y)$ to be the geodesic distance between $x$ and $y$ with respect to the metric $g$.
For $p\in M$ we define $\rho_p:M\to \R^+_0$ to be the function $\rho_p(x) := \dist(p,x)$.
If $\cP\subset M$ is a finite set, we define the distance function $\rho_\cP:M\to \R^+_0$ as:
\[
	\rho_{\cP}(x) := \min_{p\in \cP} \rho_p(x).
\]
We shall now establish Morse theory for the function $\rho_{\cP}$, using the framework of Morse theory for min-type functions developed in \cite{gershkovich_morse_1997}. We start by showing that for any finite set $\cP$ the function $\rho^2_{\cP}$ is a \emph{Morse min-type function}, namely that at every point we can  write $\rho_{\cP}^2$ as a minimum of finitely many smooth Morse functions.


\begin{lem}\label{lem:epsilon}
Given a compact Riemannian manifold $(M,g)$ there exists $r_{\mt}>0$ such that for every $\cP\subset M$ the function $\rho^2_{\cP}$ is a Morse min-type function on $B_{r_{\mt}}(\cP)$.
\end{lem}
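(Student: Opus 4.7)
The plan is to choose $r_{\mt}$ smaller than the uniform injectivity radius of $M$, so that for every $p \in M$ the squared distance $\rho_p^2$ is a smooth Morse function on $B_{r_{\mt}}(p)$; then the min-type structure of $\rho_\cP^2 = \min_{p \in \cP}\rho_p^2$ reduces the claim to a local statement about finitely many of these components. Since $M$ is compact, the injectivity radius function $p \mapsto \mathrm{inj}(p)$ is continuous and strictly positive, so $r_0 := \inf_{p \in M}\mathrm{inj}(p) > 0$, and we may take any $r_{\mt} \in (0, r_0)$.

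For a single base point $p$, the exponential map $\exp_p$ is a diffeomorphism from the Euclidean ball $B_{r_{\mt}}(0) \subset T_pM$ onto $B_{r_{\mt}}(p)$, and by the Gauss lemma $\rho_p^2(x) = \abs{\exp_p^{-1}(x)}^2$ throughout $B_{r_{\mt}}(p)$. This formula is smooth on the whole ball, including at $p$ itself, and in the normal coordinates $(x^1,\ldots,x^d)$ it reads simply $(x^1)^2 + \cdots + (x^d)^2$. In particular, $\rho_p^2$ has a unique critical point in $B_{r_{\mt}}(p)$, namely the non-degenerate minimum at $p$ with Hessian $2\delta_{ij}$, so $\rho_p^2$ is a smooth Morse function on $B_{r_{\mt}}(p)$.

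Now fix a finite $\cP$ and a point $x \in B_{r_{\mt}}(\cP)$, so that $\rho_\cP(x) < r_{\mt}$. By continuity of each $\rho_q$ and the finiteness of $\cP$, there is an open neighborhood $U$ of $x$ such that only the indices $q \in \cP$ with $\rho_q(x) < r_{\mt}$ can realize $\min_q \rho_q^2$ on $U$, and after shrinking $U$ we may further assume that each such $q$ satisfies $\rho_q(y) < r_{\mt}$ for every $y \in U$. On $U$, the function $\rho_\cP^2$ is therefore expressed as the minimum of finitely many smooth Morse functions $\{\rho_q^2\}$, each defined on a ball containing $U$; this is exactly the local min-type presentation required by the framework of \cite{gershkovich_morse_1997}.

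The main obstacle is producing the uniform positive lower bound on $\mathrm{inj}(p)$, which is the essential compactness input; outside $B_{r_{\mt}}(\cP)$ one could not hope for such a representation, since $\rho_p^2$ ceases to be smooth once one crosses the cut locus of $p$. A secondary subtlety is that $\rho_p$ itself fails to be smooth at $p$, so the squaring in the statement is indispensable --- the Gauss lemma is precisely what turns $\rho_p^2$ into a globally smooth Morse function on all of $B_{r_{\mt}}(p)$.
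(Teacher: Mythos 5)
Your proof is correct and takes essentially the same route as the paper's: use compactness of $M$ to get a uniform radius below which each $\rho^2_p$ is a smooth Morse function on $B_{r_{\mt}}(p)$, then observe that the local min-type presentation of $\rho^2_{\cP}$ follows at any point of $B_{r_{\mt}}(\cP)$. The paper's proof is terse (it only says "there exists $r_p$..." and "the result follows"), whereas you make the choice concrete by taking $r_{\mt}$ below the uniform injectivity radius and by spelling out the local finiteness argument; the only detail the paper mentions that you do not is strict convexity of each $\rho^2_p$ on $B_{r_{\mt}}(p)$, which is not part of the definition of Morse min-type and is re-established separately (at the smaller scale $r_{\max}$) where it is actually needed.
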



\begin{proof}
For any $p \in M$,  there exists $r_p>0$ such that the function $\rho^2_p( \cdot)$ is smooth, Morse, and strictly convex on $B_{r_p}(p)$. Since the metric $g$ is smooth we can choose $r_p$ continuously in $p \in M$, and since  $M$ is compact we can define $r_{\textnormal{mt}} := \min_{p\in M} r_p >0$. The result follows.
\end{proof}


Now that we established that $\rho_{\cP}^2$ is a Morse min-type function, we want to explore its critical points.
Similarly to the Euclidean case (cf. \cite{bobrowski_distance_2014}),  critical points of index $k$ are generated by subsets $\cY\subset \cP$ containing $k+1$ points, and are located at the ``center'' of the set. While in the Euclidean case the center of $\cY$ is simply taken to be the center of the unique $(k-1)$-sphere that contains $\cY$, in the general Riemannian case we need to carefully define the notion of a center.


\subsection{The center and radius of a set.}


Let $\cY$ be a finite subset of $M$ and define:
\[\begin{split}
	E(\cY) &:= \lbrace x \in M \ \vert \  \rho_{p_1}(x)= \rho_{p_2}(x) = \cdots = \rho_{p_k}(x)) \rbrace,\\
	E_{r}(\cY) &:= {E(\cY)\cap B_{r}(\cY).}
\end{split}\]
In other words, $E(\cY)$ is the set of all points that are equidistant from $\cY$, and $E_r(\cY)$ is its restriction to a small neighborhood around $\cY$. We say that a subset $\cY$ is \emph{generic} if $E(\cY)\ne \emptyset$, and the zero level sets of the $k-1$ functions $\rho^2_{p_i}(\cdot)-\rho^2_{p_1}(\cdot)$ intersect transversely\footnote{Two submanifolds $N_1$ and $N_2$ of $M$ are said to intersect transverly at a point $p$, if $T_p N_{1} + T_p N_2 = T_pM$. They are said to intersect transversely if they intersect transversely at all points $p \in N_1 \cap N_2$.} in $B_{r_{\mt}}^\cap(\cY)$. This guarantees that for a generic set the intersection $E_{r_{\mt}}(\cY)$ is a smooth submanifold of dimension $d-k+1$.


\begin{lem}\label{lem:center}
There exists a positive $r_{\max}< r_{mt}$ such that if $\cY\subset M$ is a finite generic subset with $E_{r_{\max}}(\cY) \ne \emptyset$, 
then there exists a unique point $c(\cY) \in M$ such that  for all $p\in \cY$,
\begin{equation}\label{eq:center}
\rho_{p}(c(\cY))= \inf_{x \in E(\cY)}\rho_{\cY}(x).
\end{equation}
In that case, we define $\rho(\cY) := \rho_{\cY}(c(\cY))$, and we will refer to $c(\cY)$ and $\rho(\cY)$ as the center and radius of the set $\cY$, respectively.
\end{lem}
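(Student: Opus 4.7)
My plan is to establish existence by a compactness argument and uniqueness via strict convexity of $\rho_p^2$ restricted to the equidistant submanifold $E(\cY)$. The threshold $r_{\max}$ will be chosen strictly smaller than $r_{\mt}$, smaller than half the injectivity radius of $M$, and small enough that the perturbation estimates below hold uniformly over generic $\cY$.

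For existence, the set $E_{r_{\max}}(\cY) = E(\cY) \cap B^\cap_{r_{\max}}(\cY)$ is closed and contained in the compact ball $B_{r_{\max}}(p_1)$, hence compact; by assumption it is nonempty. Since every $x \in E(\cY)$ has all $\rho_{p_i}(x)$ equal, the restriction $\rho_\cY|_{E(\cY)}$ equals $\rho_{p_1}|_{E(\cY)}$ and is continuous, so it attains its minimum on $E_{r_{\max}}(\cY)$ at some $c$. Any $x \in E(\cY) \setminus E_{r_{\max}}(\cY)$ satisfies $\rho_p(x) > r_{\max}$ for some $p \in \cY$, hence $\rho_\cY(x) > r_{\max} \ge \rho_\cY(c)$; so $c$ attains the infimum over all of $E(\cY)$.

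For uniqueness, by genericity $E(\cY) \cap B^\cap_{r_{\mt}}(\cY)$ is a smooth $(d-k+1)$-dimensional submanifold of $M$; I endow it with the induced Riemannian metric and denote by $\tilde\nabla$ the corresponding Levi-Civita connection. For any $p \in \cY$, $c \in E_{r_{\max}}(\cY)$, and $X \in T_c E(\cY)$, the standard identity
\[
\tilde\nabla^2(\rho_p^2|_{E(\cY)})(X, X) = \nabla^2 \rho_p^2(X, X) - \langle \nabla \rho_p^2(c),\, II_c(X, X) \rangle
\]
relates the ambient and induced Hessians, where $II$ is the second fundamental form of $E(\cY) \hookrightarrow M$. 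From Lemma \ref{lem:epsilon} and the expansion $\rho_p^2(x) = |x|^2$ in normal coordinates centered at $p$ (combined with \eqref{eq:NormalCoords}) one gets $\nabla^2 \rho_p^2 \ge (2 - O(r_{\max}^2))\,g$ on $B_{r_{\max}}(p)$, while $|\nabla \rho_p^2(c)| = 2 \rho_p(c) \le 2 r_{\max}$. The key estimate is a uniform bound $|II_c| = O(r_{\max})$: in normal coordinates around $c$, the Euclidean counterpart of $E(\cY)$ (in which each $\rho_{p_i}$ is replaced by the Euclidean distance to $p_i$ in the chart) is cut out by the affine equations $|x - p_i|^2 - |x - p_1|^2 = 0$ and hence is an affine subspace with vanishing second fundamental form, while the Riemannian corrections from Lemma \ref{lem:RiemannianMeasureComparison} contribute only $O(r_{\max})$ to $|II|$. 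Combining, the restricted Hessian $\tilde\nabla^2(\rho_p^2|_{E(\cY)})$ is positive definite on $E_{r_{\max}}(\cY)$ for sufficiently small $r_{\max}$.

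Strict convexity then yields uniqueness by a standard argument: for small $r_{\max}$, the set $E_{r_{\max}}(\cY)$ is geodesically convex in its induced metric (perturbing from the Euclidean picture, where it is the intersection of an affine subspace with a convex intersection of balls), so two distinct minimizers $c_1, c_2 \in E_{r_{\max}}(\cY)$ would be joined by a minimizing geodesic $\gamma$ inside $E_{r_{\max}}(\cY)$, and then $t \mapsto \rho_p^2(\gamma(t))$ would be a strictly convex function on $[0,1]$ with derivative vanishing at both endpoints, a contradiction. The principal obstacle I expect is making the two perturbation estimates uniform in $\cY$: the bound $|II| = O(r_{\max})$ and the geodesic convexity of $E_{r_{\max}}(\cY)$. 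Both reduce to careful expansions in Riemannian normal coordinates, leveraging the comparison estimates of Section \ref{sec:ineqs}.
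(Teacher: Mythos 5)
Your existence argument matches the paper's verbatim: compactness of $E_{r_{\max}}(\cY)$, continuity of the restricted distance, and the observation that $\rho_{\cY}>r_{\max}$ outside $E_{r_{\max}}(\cY)$ make the local minimizer a global one.

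For uniqueness, you and the paper hinge on the identical key idea --- strict convexity of $\rho_p^2$ restricted to $E(\cY)$, which holds because $E(\cY)$ is approximately totally geodesic at small scales --- but you carry out the verification differently. The paper defers this to Lemma~\ref{lem:Strictly_Convex} in Appendix~\ref{appendix2}, proved by a parabolic rescaling trick: one pulls back $g$ to the unit ball via $s_\delta(x)=\delta x$, shows the rescaled metric $g_\delta=\delta^{-2}s_\delta^*g$ converges in $C^2$ to the Euclidean metric, notes that both the restricted squared-distance function and the equidistant set depend smoothly on $\delta$, and concludes that the restricted Hessian $H_\delta=H_0+\delta H_1+O(\delta^2)$ inherits positive-definiteness from the Euclidean case $\delta=0$. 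You instead invoke the Gauss-type identity relating ambient and induced Hessians directly, bounding $\nabla^2\rho_p^2\gtrsim(2-O(r_{\max}^2))g$, $|\nabla\rho_p^2|\le 2r_{\max}$, and $|II|=O(r_{\max})$. The rescaling approach avoids having to control the second fundamental form explicitly (it is absorbed into the smooth dependence on $\delta$), whereas your Hessian-decomposition version is more quantitative and isolates exactly which geometric quantities must be small. Both approaches share the same subtle point that you correctly flag --- obtaining bounds that are uniform over all admissible tuples $\cY$, which the paper handles by compactness of $M^k$ and continuity of the relevant quantities. One small remark: your claimed $|II|=O(r_{\max})$ is stronger than you need; even a uniform $O(1)$ bound on $II$ would suffice, since the correction term $\langle\nabla\rho_p^2,II(X,X)\rangle$ is already $O(r_{\max})$ from the gradient factor. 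Your explicit appeal to geodesic convexity of $E_{r_{\max}}(\cY)$ to run the two-minimizers-joined-by-a-geodesic argument is a reasonable way to make precise a step the paper leaves implicit.
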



\begin{proof}
Since $\cY\subset M$ is generic, $E_{r_{\mt}}(\cY)$ is a nonempty smooth submanifold of dimension $d-k+1$. Moreover, in $E_{r_{\mt}}(\cY)$ we know that $\rho_{\cY} \le r_{\mt}$, with equality attained only on the boundary. Thus, as $E_{r_{mt}}(\cY)$ is bounded and $\rho(\cY)$ is continuous, there is a minimum $c(\cY)$ of $\rho(\cY)$ in $E_{r_{\mt}}(\cY)$. Moreover, since in $E(\cY)\backslash E_{r_{\mt}}(\cY)$ we have $\rho_{\cY} > r_{\mt}$, then $c(\cY)$ is actually a global minimum of $\rho_{\cY} \vert_{E(\cY)}$. In addition, it follows from the definition of $E(\cY)$ that for any $p \in \cY$ we have $\rho_{\cY}|_{E(\cY)} \equiv \rho_p|_{E(\cY)}$, and so equation \eqref{eq:center} holds.\\
To prove that $c(\cY)$ is unique, we use Lemma \ref{lem:Strictly_Convex} in Appendix \ref{appendix2}. This lemma shows that for $r' \le r_{\mt}$ sufficiently small $E_{r'}(\cY)$ approaches a totally geodesic submanifold, as the Riemannian distance functions approach the Euclidean ones, and therefore the restriction of $\rho_{p}$ to $E_{r'}(\cY)$ is strictly convex. As a consequence, $\rho_{p}$ has a unique minimum in $E_{r'}(\cY)$. Since $\rho_{\cY}|_{E(\cY)} \equiv \rho_p|_{E(\cY)}$ (for any $p\in \cY$), and since $\rho_\cY > r'$ on $E(\cY)\backslash E_{r'}(\cY)$, we conclude that $c(\cY)$ is the unique minimum of $\rho_{\cY}$ in $E(\cY)$. Since $M$ is compact so is $M^k$, for $k\in \mathbb{N}$, hence we can minimize the value of $r'$ over all $k$-tuples $\cY \subset M$ as in the statement. From here on we will define $r_{\max}$ to be this minimum value of $r'$. 
\end{proof}


\begin{rem}
Given $r_{\max}$ as in Lemma \ref{lem:center} we have that:
\begin{itemize}
\item For any $\cP$, $\rho_{\cP}^2$ is a Morse min-type function on $B_{r_{\max}}(\cP)$. 
\item For every $\cY\subset \cP$ with $E_{r_{\max}}(\cY)$ nonempty, $c(\cY)$ is uniquely defined. 
\item For $r \leq r_{max}$, the exponential map $\exp_p$ is always defined for $v \in T_pM$ with $\abs{ v } \leq r$.
\end{itemize}
\end{rem}


\begin{rem}
In the 1920's Cartan used a different way to associate a center to a finite set of points in a negatively curved Riemannian manifold, called the \emph{center of mass}. This definition could be adapted to our context as follows. By the definition of $r_{\max}$, for each $p \in \cY$ the function $\rho_p$ is strictly convex in {$E_{r_{max}}(\cY)$}. Thus, the function 
$\max_{p \in \cY}  \rho_p(\cdot)$ 
is strictly convex in $B_{r_{max}}^{\cap}(\cY)$, and achieves a unique minimum there. This minimum is defined to be  the center of mass $\textnormal{cm}(\cY)$. We point out that in general, the center of mass need not be the same as a center $c(\cY)$ we defined above, and does not serve our purposes.
\end{rem}


\subsection{Critical points for the distance function.}

In classical Morse theory, the critical points of a functions are those points where  the gradient $\nabla f$ is zero. The index of a critical point is the number of negative eigenvalues of the Hessian $H_f$, which can be thought of as the number of independent directions we can leave the critical point along which the function values will be decreasing. Consequently, critical points of index $0$ are the minima, and if $f$ is defined over a $d$-dimensional manifold, then critical points of index $d$ correspond to the maxima.
We now wish to investigate the critical locus of $\rho^2_{\cP}$ in an analog fashion. However, since $\rho^2_{\cP}$ is non differentiable, definitions of critical points have to be adjusted. 
We start by noticing that $\rho^2_{\cP}$ is nonnegative and vanishes precisely at the points $p \in \mathcal{P}$. Thus, $\cP$ is the set of minima, or index $0$ critical points, of $\rho^2_\cP$. 
To find the critical points of higher index, we will use the results of \cite{gershkovich_morse_1997}, which require the following definition.


\begin{defn}\label{def:delta}
Let $\cY = \lbrace y_1,\ldots, y_k \rbrace \subset M$ and $p \in M$, then we define the polytope in $\Delta(\cY) \subset T_{p}M$, given by
\[
	\Delta(\cY) := \conv(\set{\nabla \rho^2_{y_i}(p)}_{i=1}^k),
	\]
where $\conv(\cdot)$ means convex hull, and $\nabla \rho^2_{y_i}$ is the gradient of $\rho^2_{y_i}$, defined as the unique vector field such that $g(\nabla \rho^2_{y_i}, v)=d\rho^2_{y_i}(v)$, for any vector field $v$.
\end{defn} 


The critical points of $\rho^2_{\cP}$ are then characterized by the following Proposition.


\begin{prop}\label{prop:CriticalPoint}
Let $c \in M$ be a critical point of $\rho^2_\cP$. Then, $c$ has index $k$ if and only if there exists a set $\cY\subset\cP$ of $k+1$ points such that:
\begin{enumerate}
\item $c(\cY)=c$, where $c(\cY)$ is the center of $\cY$ (see Lemma \ref{lem:center}),
\item  $\Delta(\cY) \in T_cM$ contains the origin $0\in T_cM$,
\item $B(\cY)\cap \cP = \cY$.
\end{enumerate}
\end{prop}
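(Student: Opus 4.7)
The plan is to invoke the min-type Morse theory of \cite{gershkovich_morse_1997}. By Lemma~\ref{lem:epsilon}, for $c \in B_{r_\mt}(\cP)$ the function $\rho^2_\cP$ is locally the minimum of finitely many smooth Morse functions $\{\rho^2_p\}_{p \in \cP}$, so the critical-point analysis reduces to that of a min-type function. The first step is to identify the \emph{active set} at $c$, namely $\cY := \{p \in \cP : \rho_p(c) = \rho_\cP(c)\}$. Near $c$ one has $\rho^2_\cP \equiv \min_{p_i \in \cY} \rho^2_{p_i}$, and $\cY$ is characterized by the empty-ball condition $B(\cY) \cap \cP = \cY$; this is exactly condition~(3).

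Second, the general criticality criterion from \cite{gershkovich_morse_1997} asserts that $c$ is a critical point of the min-type function $\rho^2_\cP$ if and only if $0 \in \conv\{\nabla \rho^2_{p_i}(c) : p_i \in \cY\} = \Delta(\cY)$, which is condition~(2). To deduce (1), I would observe that by the Morse genericity hypothesis on $\cY$, the equidistant locus $E(\cY)$ is a smooth submanifold of $M$ of dimension $d-k$ (since $\cY$ has $k+1$ points). On $E(\cY)$ all the restrictions $\rho_{p_i}|_{E(\cY)}$ coincide, and the criticality of $\rho^2_\cP$ at $c$ forces $c$ to be a critical point of the common restriction $\rho_p|_{E(\cY)}$ for any $p \in \cY$. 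By Lemma~\ref{lem:center}, this restriction is strictly convex with a unique minimum at $c(\cY)$, so $c = c(\cY)$, giving~(1).

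For the index statement, the strategy is to decompose $T_c M = T_c E(\cY) \oplus N_c E(\cY)$ with respective dimensions $d-k$ and $k$, and analyze the generalized Hessian of the min-type function in each factor. On $T_c E(\cY)$, equidistance forces $\rho^2_\cP \equiv \rho^2_{p_i}$ for any $p_i \in \cY$, and strict convexity at the minimum $c(\cY)$ yields $d-k$ ascending directions. On $N_c E(\cY)$, pick any nonzero normal vector $v$; writing $0 \in \Delta(\cY)$ as $\sum_i \lambda_i \nabla\rho^2_{p_i}(c) = 0$ with $\lambda_i > 0$ (genericity ensures strict positivity), pairing with $v$ forces some $\langle \nabla \rho^2_{p_i}(c), v \rangle < 0$ (it cannot vanish on all $i$ since that would place $v$ in $T_c E(\cY)$). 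Hence the corresponding $\rho^2_{p_i}$ strictly decreases to first order along $v$, and so does $\min_{p_i\in\cY}\rho^2_{p_i}$. This yields $k$ descending directions, so the index is exactly $k$. The converse (that any $\cY$ satisfying (1)--(3) yields an index-$k$ critical point at $c(\cY)$) is the same analysis read in reverse.

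The main technical obstacle I foresee is making the transverse-descent argument rigorous in the Riemannian setting rather than the Euclidean one. In Euclidean space this reduces to the well-known analysis of the squared distance to $k+1$ points (cf.~\cite{bobrowski_distance_2014}), which has a clean affine-geometric interpretation in terms of the $k$-dimensional affine hull of $\cY$. In the Riemannian case one must perform the analysis in geodesic normal coordinates at $c$, where by \eqref{eq:NormalCoords} and Lemma~\ref{lem:RiemannianMeasureComparison} the metric is Euclidean up to curvature corrections of order $r^2$; the conclusion then follows by a perturbation argument using that $r \le r_{\max} \le r_\mt$ is small and that both conditions (the genericity of $\cY$ and the fact that $0$ lies in the interior of $\Delta(\cY)$) are open. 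This is precisely the setting in which the general min-type Morse Lemma of \cite{gershkovich_morse_1997} applies and confirms that the index count is unchanged from the Euclidean model.
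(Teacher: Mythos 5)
Your proposal takes essentially the same route as the paper: both invoke the min-type Morse framework of \cite{gershkovich_morse_1997}, read off conditions (2) and (3) from the definition of a critical point of a min-type function, deduce (1) from the strict convexity of the distance functions restricted to the equidistant locus $E(\cY)$ (via Lemma~\ref{lem:center}), and obtain the index from the formula $I_c(\rho)=\operatorname{codim}E(\cY)+I_c(\rho|_{E(\cY)})=k+0$. Your extra step of re-deriving that index formula by counting $d-k$ ascending directions in $T_cE(\cY)$ and $k$ descending directions in $N_cE(\cY)$ is a reasonable sketch of what lies behind it, but the paper simply cites the formula from \cite{gershkovich_morse_1997}, and the technical worry you raise about passing from the Euclidean model to the Riemannian one is already absorbed into the choice of $r_{\max}$ together with the convexity result of Lemma~\ref{lem:Strictly_Convex}.
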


Figure \ref{fig:crit_pts} depicts the conditions in the last proposition.

\begin{proof}

{Let $c$ be a critical point} of the min-type function $\rho^2_\cP$ and suppose that the minimal representation of $\rho^2_\cP$ in a neighborhood of $c$ is of the form
$$\rho^2_\cP (\cdot)=\min_{i=1,...,k+1} \rho^2_{y_i}(\cdot).$$
Denoting $\cY = \set{y_1,\ldots, y_{k+1}}$, and given that the representation above is minimal, at $c$ we must have $\rho_{y_1}(c)=...=\rho_{y_{k+1}}( c)$.
In addition, using the definition of critical points of a min-type function from \cite{gershkovich_morse_1997}, we have that $c$ is a critical point of  each $\rho_{y_i}$ restricted to the smooth $d-(k-1)$ dimensional submanifold:
$$E(\cY) \cap B_{r_{\max}}(c) = \lbrace p \in B_{r_{\max}}(c)  \  \vert  \  \rho_{y_1}(p)=...=\rho_{y_{k+1}}(p) \rbrace.$$
However, by the definition of $r_{max}$, each distance function is strictly convex on $E_{r_{\max}}(\cY)$, and so has a unique critical point (a minimum) which is $c(\cY)$ by definition. Hence, by \cite{gershkovich_morse_1997} the index $I_c$ of $c=c(\cY)$ is
$$I_c(\rho) = k  + I_c(\rho \vert_{E(\cY)})=k.$$
The second and third conditions are now immediate from the definition of a critical point of a min-type function in \cite{gershkovich_morse_1997}. Conversely, if the conditions in the statement hold, then it immediately follows from the definition of a critical point of a Morse min type function in \cite{gershkovich_morse_1997} that $c$ is an index $k$ critical point.
\end{proof}


\begin{figure}[h]
\begin{center}
\includegraphics[scale=0.3]{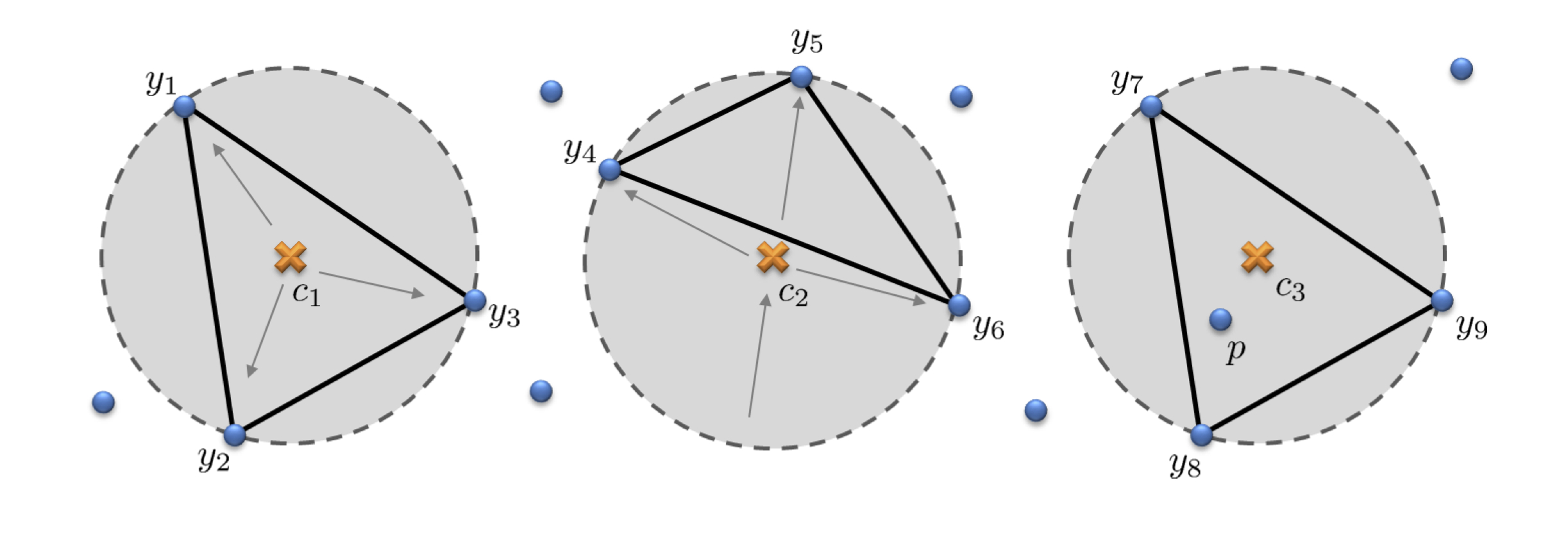}
\caption{\label{fig:crit_pts} Critical points of index $2$ in $\R^2$.
The blue points are the points of  $\cP$. We examine three subsets of $\cP$: $\cY_1 = \set{y_{_1},y_{_2},y_{_3}}$, $\cY_2 = \set{y_{_4},y_{_5},y_{_6}}$, and $\cY_3 = \set{y_{_7},y_{_8},y_{_9}}$. The orange x's are the centers  $c(\cY_i) = c_i$. The shaded balls are $B(\cY_i)$, and the interior of the triangles are $\Delta(\cY_i)$. The arrows represent the flow direction. For $\cY_1$ both conditions in Proposition \ref{prop:CriticalPoint} hold and therefore $c_1$ is a critical point. However, for $\cY_2$ condition (2) does not hold, and for $\cY_3$ condition (3) fails, therefore $c_2$ and $c_3$ are not critical points.}
\end{center}
\end{figure}


\begin{rem}
If $c \in M$ is a critical point of $\rho_{\cP}^2$ with $\cY \subset \cP$, and $c(\cY)=c$ as in proposition \ref{prop:CriticalPoint}. Then,  $c$ is not only a center $c(\cY)$ -  it is also the center of mass $\textnormal{cm}(\cY)$. However, this only true for critical points.
\end{rem}


\begin{rem}
In \cite{gershkovich_morse_1997}, the gradient of $\rho^2_\cP$ at a point $p \in M$ is defined to be $\Delta(\cY)$, where $\cY = \set{y_1,\ldots,y_k}$ forms a minimal representation of $\rho_\cP$ in a neighborhood of  $p$, i.e. $\rho^2_\cP (\cdot)= \min_i  \rho_{y_i}^2(\cdot)$. This definition of a critical point was first used (in an implicit way) in the work of Grove and Shiohama in \cite{Grove1977} and also in that of Gromov in \cite{Gromov1981}.
One could have equally defined $\Delta^*(\cY) \in T_p^*M$ as the convex hull of the $1$-forms $\lbrace d \rho_{y_i}(\cdot) \rbrace_{i=1}^{k+1}$. Then, the conditions $2$ and $3$ in the previous proposition have trivial analogues for $\Delta^*(\cY)$. We have chosen to work with $\Delta(\cY)$ rather than its dual in order to more closely follow the definitions in \cite{gershkovich_morse_1997}.
\end{rem}


Now that we have defined the critical points and their index, Morse theory (and in particular the inequalities discussed in Section \ref{sec:morse_ineq}) for $\rho_{\cP}^2$ follows from \cite{gershkovich_morse_1997}.
In particular, we can deduce the role of these critical points in generating the homology of the sublevel sets,
$$\rho_{\cP}^{-1}(0,r)=B_r(\cP),$$
which are homotopy equivalent to $\cC_r(\cP)$. 
While  we have considered $\rho_{\cP}^2$, since the sublevel sets of both $\rho_{\cP}$ and $\rho^2_{\cP}$ are the same union of balls (just at different levels), everything we discussed applies the same way to $\rho_{\cP}$.
Thus, from here own we shall refer about critical points and Morse theory for $\rho_{\cP}$, referring to the definitions we provided for $\rho^2_{\cP}$.


\subsection{Morse inequalities.}\label{sec:morse_ineq}


In proving the main results of this paper, we will make use Morse inequalities. In this section we present the version of these inequalities for Morse min-type functions which we shall apply to $\rho^2_{\cP}$. The proof uses a standard argument in algebraic topology and the reader unfamiliar with the basics of the subject is welcome to skip to remark \ref{rem:Morse_Inequalities}. 

Let $f$ be a min-type Morse function, and define:
\[
	C_k(a,b) := \# \text{ critical points $c$ of index $k$, such that $a < f(c) \le b$}
\]
The following lemma provides a slightly less standard version of the Morse inequalities.


\begin{lem}\label{lem:MorseInequalities}
Let $f:M \rightarrow \mathbb{R}$ be a Morse min-type function. For $a \in \mathbb{R}$ denote $M_a :=f^{-1}(-\infty, a]$. Then, for all $k \in \mathbb{N}$ the following inequalities hold 
\begin{equation}\label{eq:morse_ineq}
\beta_k(M_a)-\beta_k(M) \leq C_{k+1}(a, + \infty).
\end{equation}
\end{lem}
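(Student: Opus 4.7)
The plan is to sweep upward through the critical values of $f$ lying above level $a$, and to track how each critical point affects $\beta_k$. Since $M$ is compact, $f$ attains a finite maximum, so there exists some $b$ with $M_b = M$; thus I will reduce the inequality to the local change in homology as one passes a single critical value.

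The main ingredient I would invoke is the min-type analogue of the fundamental lemma of Morse theory from \cite{gershkovich_morse_1997}: if $c$ is a critical point of $f$ of index $j$ with $f(c) = v$, and no other critical values lie in $[v-\e, v+\e]$, then $M_{v+\e}$ is homotopy equivalent to $M_{v-\e}$ with a single $j$-cell attached along its boundary sphere. I would then apply the long exact sequence of the pair $(M_{v+\e}, M_{v-\e})$, whose relative homology is that of $S^j$. This yields the familiar dichotomy: attaching a $j$-cell to a space $X$ either increases $\beta_j$ by $1$ (when the attaching sphere is null-homologous in $X$), or decreases $\beta_{j-1}$ by $1$ (when it represents a nontrivial class), and no other Betti number is affected.

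From this I would extract the key structural observation for a sweep from $M_a$ to $M$: the number $\beta_k$ is \emph{unaffected} by critical points of index different from $k$ or $k+1$; index $k$ critical points can only increase it, and index $k+1$ critical points can only decrease it, each by at most $1$. Letting $I_k$ denote the number of index $k$ critical points in $(a, +\infty)$ whose attachment creates a new class, and $D_{k+1}$ the number of index $k+1$ critical points in $(a, +\infty)$ whose attachment kills a class, a telescoping sum across all critical values in $(a,b]$ then gives
\[
\beta_k(M) - \beta_k(M_a) = I_k - D_{k+1},
\]
and therefore $\beta_k(M_a) - \beta_k(M) = D_{k+1} - I_k \le D_{k+1} \le C_{k+1}(a, +\infty)$.

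The main technical point is not the counting above but rather the justification of the local cell-attachment model at a critical point of the non-smooth function $f$; here I would simply invoke the local normal forms of \cite{gershkovich_morse_1997}, which show that min-type Morse sublevel sets evolve exactly as in the smooth case. A minor subtlety requiring care is when several critical points share a single critical value: these can be handled one at a time in an arbitrary order, since their cell-attaching neighborhoods can be separated and the Betti-number arithmetic is order-independent.
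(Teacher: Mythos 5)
Your proof is correct, but it follows a genuinely different route from the paper's. The paper first reduces to the smooth Morse--Smale case via the approximation theorem of Gershkovich--Rubinstein, then applies the long exact sequence of the single pair $(M, M_a)$ to obtain $\beta_k(M_a) - \beta_k(M) \leq \beta_{k+1}(M,M_a)$, and finally invokes the standard \emph{relative} Morse inequalities, $\beta_{k+1}(M,M_a) \leq C_{k+1}(a,+\infty)$, as a black box. You instead re-derive the mechanism behind the relative Morse inequalities from scratch: you sweep through critical levels, use the cell-attachment model (also from Gershkovich--Rubinstein) together with the long exact sequence of each local pair $(M_{v+\e},M_{v-\e})$, extract the create/kill dichotomy, and telescope. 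Your approach is more elementary and self-contained, at the cost of being longer; it also makes the sharper identity $\beta_k(M) - \beta_k(M_a) = I_k - D_{k+1}$ explicit, which the paper's global argument hides. The paper's approach is slicker once one accepts the relative Morse inequalities off the shelf. One small point worth stating explicitly in your version: the clean create-or-kill dichotomy for a single cell attachment requires field coefficients (so the boundary map $H_j(D^j,S^{j-1}) \to H_{j-1}$ is either zero or injective); the paper works with field coefficients throughout, so this is available, but since your argument depends on it more visibly than the paper's does, it deserves a mention.
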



\begin{proof}
Using the approximation results of \cite{gershkovich_morse_1997} there is no loss of generality to restrict to the case when $f$ is a standard Morse-Smale function. Then, we consider the long exact sequence in homology for the pair $(M,M_a)$, namely
\[
\cdots\rightarrow H_{k+1}(M) \rightarrow H_{k+1}(M, M_a) \xrightarrow{\delta} H_k(M_a) \xrightarrow{i} H_{k}(M) \rightarrow \cdots
\]
The exactness of this sequence yields
\[
\splitb
\beta_k(M_a) & =  \dim H_{k}(M_a) = \dim( \im (i))+ \dim(\ker (i)) \\
& =  \dim( \im (i))+ \dim(\im (\delta)) \\
& \leq  \dim(H_k(M)) + \dim( H_{k+1}(M,M_a)).
\splite
\]
This proves that $\beta_k(M_a)- \beta_k(M) \leq  \beta_{k+1}(M,M_a)$. The statement then follows from the standard Morse inequalities applied to $f \vert_{M \backslash M_a}$, stating that $\beta_{k+1}(M,M_a) \leq C_{k+1}(a, + \infty)$.
\end{proof}


\begin{rem}\label{rem:Morse_Inequalities}
Applying Lemma \ref{lem:MorseInequalities} to the function $\rho_{\cP}^2$ we have $M_{r} = \rho_{\cP}^{-1}(-\infty , r] = B_{r} (\cP)$. Then, using the Nerve Lemma \ref{lem:nerve} we have $\beta_k(M_r)= \beta_k (\cC_r(\cP) )$ and so
\begin{equation}
\beta_k ( \cC_r (\cP) )- \beta_k(M) \leq C_{k+1}(r , + \infty),
\end{equation}
where $C_{k+1}(r, \infty)$ denotes the number of index $k+1$ critical points $c$ of $\rho_{\cP}^2$ with $\rho_{\cP}^2 (c) > r^2$.
\end{rem}


\section{Change of variables (Blaschke-Petkantschin-type Formula)}


To prove the main results in this paper we will need to evaluate probabilities related to the existence of certain critical points. These computations often result in complicated integral formulae. In this section we present a change-of-variables technique that simplifies these calculations significantly. This technique can be thought of as a Riemannian generalization of the Blaschke-Petkantschin formula that appeared in  \cite{edelsbrunner2016expected,miles1971isotropic}. We start by introducing some useful notation.

\begin{itemize}
\item $Gr(k,d)$ denotes the Grassmannian of $k$-planes in a $d$-dimensional real vector space. When we pick local coordinates, there is a fixed isomorphism $\mathbb{R}^d \cong T_cM$. Then, we may refer to $Gr(k,T_pM)$ when we want to emphasize the fact that we are parametrizing $k$-planes in the tangent space to $M$ at $p$. 

\item Pick normal coordinates $(x^1 , \ldots , x^d)$ centered at $p \in M$ associated with an orthonormal frame $\lbrace (\partial_{x^i})_c \rbrace_{i=1}^d$ of $T_pM$. For $V \in Gr(k,T_pM)$ and $r>0$, we define $S_r(V) \subset M$ to be the image under the exponential map of $(k-1)$-dimensional sphere of radius $r$ centered at the origin and equipped with the induced metric from $g$. In addition, we denote by $\mathbb{S}_r(V)$ the sphere of radius $r$ equipped with the Euclidean metric  $g_E = \delta_{ij} dx^i \otimes dx^j$. Notice that as a manifolds $S_r(V) = \mathbb{S}_r(V)$ by the Gauss lemma, the goal of the notation is to distinguish the metrics with which they are equipped.

\item We define $\ind_r:2^M\to\set{0,1}$ as,  
\[
\ind_r ( \cY) := \ind\set{ E_{r_{\max}}(\cY) \ne \emptyset \text{ and } \rho(\cY) \leq r},
\]
where $\ind\set{\cdot}$ stands for the indicator function.

\item Subsets of $M$ will be referred to as either $\cY\subset M$ or $\by \in M^k$, depending on the context. To keep notation simple, we will allow functions defined on $2^M$ (such as $c(\cdot),\rho(\cdot),\ind_r(\cdot)$), to be applied to $\by \in M^k$ as well.

\item Given $\cY = \lbrace y_1, \ldots , y_k \rbrace \subset M$ with well defined center $c=c(\cY)$ we  let  $v_i \in T_c M$ be such that $y_i = \exp_c(v_i)$, in particular $\abs{ v_i } = \rho(\cY)$. Let $(x^1, \ldots , x^d)$ be normal coordinates such that the $(\partial_{x^i})_c$ form an orthonormal frame of $(T_cM, \langle \cdot , \cdot \rangle)$. Suppose the $k$-vectors $v_i$ lie in a $(k-1)$-dimensional vector subspace $V \subset T_cM$ (this will be shown to be the case in the proof of Lemma \ref{lem:Integral}), then they generate a $(k-1)$-dimensional parallelogram. For $u \in \mathbb{R}^+$, let $\Upsilon_{u}(\textbf{v})$, for $\textbf{v}=(v_1, \ldots , v_k)$, be the $(k-1)$-volume of the parallelogram generated by the $u \frac{v_i}{\abs{ v_i }}$ in $T_c M$.
Moreover, notice that this is a homogeneous function of degree $(k-1)$ in $u$.
\end{itemize}

The following lemma will be  useful for us in the proofs of the main results.
The main idea is that instead of integrating over tuples $\by\in M^{k+1}$, we can perform a change of variables so that we first choose a $(k-1)$-sphere on which the points will be placed (with center $c$ and radius $u$), and then we place $k+1$ points on that sphere.


\begin{lem}\label{lem:Integral}
Let $\mathcal{P}$ and $r_{\max}$ be as in the discussion above, and let $r< r_{\max}$. Then, there is an invariant measure $\dmu_{k,d}$ on $Gr(k, d)$, such that for any smooth $f: M^{k+1} \rightarrow \mathbb{R}$, the integral $\int_{M^{k+1}} f(\by) \ind_r(\by) \abs{ \dvol_g(\by) }$ can be written as:
\begin{equation}\label{eq:big_int}
\begin{split}
 &\int_{M} \abs{ \dvol_g(c) }    \int_0^r du \ u^{dk-1}  \int_{Gr(k, T_{c}M) } \dmu_{k,d}(V)   \\  
&  \times \left(  \prod_{i=1}^{k+1} \int_{\mathbb{S}_1(V)}  \sqrt{\det(g_{ \exp_c(uw_i) } )} \abs{  \dvol_{\mathbb{S}_1(V)}(w_i) }\right) \ \Upsilon_1^{d-k}(\textbf{w}) f(\exp_c( u \textbf{w})) ,
\end{split}
\end{equation}
where $\textbf{w}=(w_1,...,w_{k+1}) \in (\mathbb{S}_1(V))^{k+1}$, $\Upsilon_1(\textbf{w})$ the $k$-volume of the parallelogram generated by the $\textbf{w}$ in $T_cM$, and $\dvol_{\mathbb{S}_{1}(V)}$ is the Riemannian measure of the Euclidean unit sphere in $V$.
\end{lem}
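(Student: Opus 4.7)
The strategy is to introduce the change-of-variables map
\[
\Phi : (c, u, V, \bw) \longmapsto \big(\exp_c(u w_1), \ldots, \exp_c(u w_{k+1})\big),
\]
defined on the open subset of $M \times (0, r_{\max}) \times Gr(k, TM) \times (\mathbb{S}_1(V))^{k+1}$ where the $w_i$ affinely span $V$, and to verify that $\Phi$ is a bijection onto a full-measure subset of $\{\by \in M^{k+1} : \ind_r(\by) = 1\}$ whose Jacobian recovers the integrand of \eqref{eq:big_int}. A dimension count confirms that both domain and target have dimension $d(k+1)$.

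For the bijection, Lemma \ref{lem:center} associates to each generic $\by$ a unique center $c(\by) \in M$ and radius $u = \rho(\by) \in (0, r_{\max})$. In geodesic normal coordinates at $c(\by)$, the pullback vectors $v_i = \exp_{c(\by)}^{-1}(y_i)$ all have Euclidean norm $u$. Moreover, the characterization of $c(\by)$ as the minimum of $\rho_\cY|_{E(\cY)}$ forces each $v_i$ to be orthogonal to $T_{c(\by)} E(\cY)$ (compare with the gradient computation in Proposition \ref{prop:CriticalPoint}); since $\dim T_c E(\cY) = d - k$, the $v_i$ lie in a unique $k$-dimensional linear subspace $V(\by) \subset T_{c(\by)}M$, and the origin is automatically their Euclidean circumcenter inside $V(\by)$. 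Setting $w_i = v_i/u \in \mathbb{S}_1(V(\by))$ provides the inverse of $\Phi$ away from a null set of degenerate configurations.

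For the Jacobian, I would factor $\Phi$ through the intermediate map $(c, u, V, \bw) \mapsto (c, \bv)$ with $v_i = u w_i$, followed by the fibrewise exponential $(c, \bv) \mapsto \by$. The second step contributes $\prod_{i=1}^{k+1} \sqrt{\det g_{\exp_c(u w_i)}}$, as the Riemannian volume form on $M$ pulls back via $\exp_c$ to $\sqrt{\det g_{\exp_c(v)}}\,\abs{dv}$ on $T_cM$. The first step is handled by the \emph{Euclidean} Blaschke--Petkantschin formula (cf.\ \cite{miles1971isotropic,edelsbrunner2016expected}) applied in each tangent fibre $T_cM \cong \R^d$: for tuples whose Euclidean circumcenter is fixed at $0 \in T_cM$,
\[
\abs{dv_1 \wedge \cdots \wedge dv_{k+1}} \;=\; u^{dk-1}\,\Upsilon_1^{d-k}(\bw) \, du \, d\mu_{k,d}(V) \prod_{i=1}^{k+1} \abs{\dvol_{\mathbb{S}_1(V)}(w_i)},
\]
where $d\mu_{k,d}$ is the invariant measure on $Gr(k,d)$ arising from this decomposition. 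Integrating $c$ against $\abs{\dvol_g(c)}$ and invoking Fubini (no over-counting occurs because $c(\by)$ is uniquely determined) then assembles the factors into \eqref{eq:big_int}.

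The principal technical subtlety is ensuring that the two Jacobian contributions genuinely decouple in this product form. The use of geodesic normal coordinates at $c$ is precisely what makes this possible: the volume form on $M^{k+1}$ pulls back \emph{fibrewise over $c$} to $\prod_i \sqrt{\det g_{\exp_c(v_i)}}\,dv_i$ on $(T_cM)^{k+1}$, so the flat Blaschke--Petkantschin change of variables can be applied inside each tangent fibre while the outer $\dvol_g(c)$-integration handles the manifold geometry at the single point $c$. The remaining burden is verifying that $c(\by)$ is a smooth function of $\by$ away from the null set of degenerate configurations, which follows from the implicit function theorem applied to the system of equations defining $E(\cY)$ together with the criticality condition on $\rho_\cY|_{E(\cY)}$ used in Lemma \ref{lem:center}.
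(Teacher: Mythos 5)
Your proposal follows essentially the same route as the paper: parametrize $\by$ by $(c,u,V,\bw)$ via Lemma~\ref{lem:center}, pull back the Riemannian volume form fibrewise to $T_cM$ through $\exp_c$ in geodesic normal coordinates, and then apply the Euclidean Blaschke--Petkantschin formula inside the tangent fibre to produce the $u^{dk-1}$, $\Upsilon_1^{d-k}$, and Grassmannian factors. The one step where you diverge is in showing that the vectors $v_i = \exp_{c(\by)}^{-1}(y_i)$ lie in a common $k$-dimensional subspace of $T_{c(\by)}M$: the paper establishes $v_i = -u(\nabla\rho_{y_i})_c$ and then argues by contradiction that the gradients cannot be linearly independent (using a dimension count on $E(\by)\cap\exp_c(W)$), whereas you observe directly that the minimality of $c(\by)$ on $E(\cY)$ forces each $\nabla\rho_{y_i}$ to project to zero on $T_cE(\cY)$, so all $v_i$ lie in the $k$-dimensional orthogonal complement $(T_cE(\cY))^\perp$. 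Your orthogonality argument is cleaner and more direct; the paper's contradiction argument buys nothing extra here. You also explicitly flag the need for $c(\by)$ to be smooth in $\by$ (via the implicit function theorem) and for the two Jacobian contributions to decouple -- both genuine technical points that the paper glosses over -- so your write-up is if anything slightly more careful about what must actually be verified.
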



\begin{proof}
We start by noticing that since $r < r_{\max}$ the center $c=c(\by)$, as in Lemma \ref{lem:center}, is well-defined and such that the points in $\by$ are all at the same distance $u\le r$ from $c$. Hence, the points lie in a normal sphere centered at $c$, and can be written as $y_i = \exp_{c}(v_i)$, for some $v_i \in T_{c}M$ with $\abs{ v_i }=u$. We will show below that these $v_i$-s lie in a $k$-dimensional subspace $V \subset T_{c(\by)}M$. In this case, \eqref{eq:big_int} is  the result of integrating first over the center point $c$,  then over the possible distances $u=\rho(\by) \in [0,r]$, then over all possible $k$-dimensional subspaces $V$ in $T_{c}M$ containing the vectors $v_i$, and finally, over the $(k-1)$-spheres in $V$, where the $v_i$-s live. Next, we will justify \eqref{eq:big_int} by examining how the measures change under this reparametrization. 
 
Given the center $c = c(\by)$ and fixing normal coordinates $(x^1, \ldots , x^d)$ centered in $c$, we have
\begin{eqnarray}\nonumber
\dvol_g(\by) & = & \bigwedge_{i=1}^{k+1} \dvol_g(y_i) = \bigwedge_{i=1}^{k+1}  \sqrt{ \abs{ \det(g_{y_i}) } } \  dx^1(y_i) \wedge \ldots \wedge dx^d(y_i) \\ \nonumber
& = & \prod_{i=1}^{k+1} \sqrt{\abs{ \det(g_{y_i}) } } \ \bigwedge_{i=1}^{k+1}  dx^1(y_i) \wedge \ldots \wedge dx^d(y_i) .
\end{eqnarray}
Now let $u$ denote the distance between $y_i$ and $c$, then we have that $y_i=\exp_c(v_i)=\exp_c(u \frac{v_i}{\abs{ v_i }})$, for some $v_i \in T_cM$. In fact, in the coordinates $(x^1, \ldots , x^d)$ we have $v_i = \sum_{j=1}^d x^j(y_i) \left( \frac{\partial}{\partial x^i} \right)_c$. Then, using the Euclidean version of the Blaschke-Petkantschin formula \cite{miles1971isotropic} in the coordinates $(x^1, \ldots, x^d)$ we have
\[
\abs {\dvol_g(\by) }  = \left( \prod_{i=1}^{k+1} \sqrt{ \abs{ \det(g_{ \exp_c( v_i) } ) }} \right) \Upsilon_u^{d-k}(\bv) \  \abs {\dvol_g(c) } \ du \   d\mu_{k,d} (V) \abs{  \bigwedge_{i=1}^{k+1}   \dvol_{\mathbb{S}_u(V)}(v_i)  }  .
\]
Recall that $\Upsilon_u(\textbf{v})$ denotes the $k$-volume of the parallelogram spanned by the $v_i$-s in $V \subset T_cM$, measured using the Euclidean metric on $V$ induced by taking the $\partial_{x^i}$ to form an orthonormal frame on $T_cM$. Equivalently, if we choose the coordinates $(x^1, \ldots , x^d)$ so that $\lbrace ( \partial_{x^i} )_c \rbrace_{i=1}^d$ forms an orthonormal frame of $(T_cM, \langle \cdot , \cdot \rangle_c )$, then $\Upsilon_u(\textbf{v})$ is simply the $k$-volume of the parallelogram spanned by the $v_i$-s in $T_c M$. In particular, $\Upsilon_u$ is homogeneous of degree $k$ in $u$, so that $\Upsilon_u = u^{k} \Upsilon_1$. Similarly $\dvol_{\mathbb{S}_u(V)}$ is homogeneous of degree $k-1$, and therefore $\dvol_{\mathbb{S}_u(V)}= u^{k-1} \dvol_{\mathbb{S}_1(V)}$. Putting it all  together we have
\[
\abs{ \dvol_g(\by) } = \abs{ \dvol_g(c) }  u^{dk-1} du  \ d\mu_{k,d} (V)  \abs{ \bigwedge_{i=1}^{k+1} \sqrt{ \abs{ \det(g_{ \exp_c(v_i) } ) }}   \ \dvol_{\mathbb{S}_1(V)} \left( \frac{v_i}{\abs{ v_i }} \right) } \ \Upsilon_1^{d-k}(\textbf{v}),
\]
Finally, note that  $y_i=\exp_c(v_i)= \exp_c ( u \frac{v_i}{\abs{ v_i }} )$, and by setting $w_i =\frac{v_i}{\abs{ v_i }}$ we obtain \eqref{eq:big_int}.\\

To complete the proof, we need to show that  $v_1,\ldots, v_{k+1}$ lie in a $k$-dimensional subspace of $T_cM$.
We start by showing that $v_i =-u  (\nabla \rho_{y_i})_c$, where $u=\rho_{y_i}(c)= \abs{ v_i }$. This follows from the fact that by definition, $v_i$ is such that $\rho_{y_i}(\exp_{c}(s v_i))=u(1-s)$ for all $s \in [0,1]$. In particular $s v_i$ is the point on the sphere of radius $s u$ in $T_cM$, such that $\exp_c(s v_i)$ minimizes the distance to $y_i$. {Hence, for any $v' \in T_{s v_i} T_cM \cong T_cM$ with $\langle v' , v_i\rangle_c=0$ 
$$\langle  \nabla \rho_{y_i} , (d \exp_c )_{s v_i} (v') \rangle_{\exp_c(s v_i)}=  \frac{d}{dt} |_{t=0} \rho_{y_i}(\exp_c(s v_i + t v'))=0,$$
and similarly by differentiating $\rho_{y_i}(\exp_{c}(s v_i))=u(1-s)$ with respect to $s$
$$ \langle \nabla \rho_{y_i} , (d \exp_c )_{s v_i} v_i \rangle_{\exp_c(s v_i)} = -u.$$}
The Gauss Lemma guarantees that $\abs{ d \exp_{sv_i} v_i }= \abs{ v_i } =u$ and so we conclude that for $s \neq 0$, $(\nabla \rho_{y_i})_{\exp_c(sv_i)}= -u^{-1} (d \exp_c )_{s v_i} v_i$. The continuity of $\nabla \rho_{y_i}$ in a normal ball and the fact that $(d \exp_c)_0$ is the identity then implies that $v_i=-u \nabla \rho_{y_i}$ at $c$, as we wanted to show.
Now we simply need to show that the $\nabla \rho_{y_i}$'s for $i \in \lbrace 1, ..., k+1 \rbrace$ are linearly dependent at $c$. Arguing by contradiction, suppose they are linearly independent and so span a $(k+1)$-dimensional space $W \subset T_{c(\by)}M$. Let $W= \text{span} \lbrace \nabla \rho_{y_i} \rbrace_{i=1}^{k+1}$, for a generic set $\by \in M^{k+1}$, $E(\by) \cap \exp_{c}(W) \subset E(\by)$ { is a nonempty $1$-dimensional manifold }(it is nonempty since $c \in E(\by) \cap \exp_{c}(W)$). By definition $c \in E(\by) \cap \exp_{c}(W)$ is a critical point of all the $\rho_{y_i}$'s restricted to $E(\by)$ and so { the orthogonal projection of $(\nabla \rho_{y_i})_c$'s to $T_c (E(\by) \cap \exp_{c}(W))$ must vanish}. However,  by definition $\lbrace \nabla \rho_{y_i} \rbrace_{i=1}^{k+1}$ span the tangent space to $W$, and therefore we have obtained a contradiction with the fact that $E(\by) \cap \exp_{c}(W)$ is a nonempty $1$-dimensional submanifold.
\end{proof}


\begin{rem}
In the previous proof, the fact that the $w_i$-s lie in a $k$-dimensional subspace of $T_cM$ can be intuitively explained as follows. If this is not true, then the image under the exponential map of a small ball around $0$ in $W$ is a smooth $(k+1)$-dimensional manifold which intersects $E(\by)$ along a $1$-dimensional manifold. Moreover, there is a direction along this intersection such that the function $\rho_{y_i}(\cdot)$ is decreasing, and so contradicting the fact that $c(\by)$ is the minimum.
\end{rem}


\section{Expected Betti Numbers - Upper Bound}\label{sec:upper}


In this section we present an upper bound on the Betti numbers of a random \cech complex in terms of $\Lambda$ (recall that $\Lambda = n\omega_d r^d$  is approximately the expected number of points inside a ball of radius $r$).
This upper bound is interesting by itself, and also useful for finding the upper threshold in Theorem \ref{thm:main}. The main result in this section is the following.


\begin{prop}\label{prop:betti_order}
Let $n \to \infty$ and $r \to 0$ in such a way that $\Lambda \to  \infty$ and $\Lambda r  \to 0$ 
Then, for every $1\le k\le  d-1$ there exists a positive constant $b_{k}>0$ (depending on $k$, $d$, and $g$) such that
$$\meanx{\beta_k(r))} \le \beta_k(M) + b_{k}{n\Lambda^k e^{-\Lambda}}.$$
\end{prop}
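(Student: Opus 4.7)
The plan is to combine the Morse-theoretic bound from Remark \ref{rem:Morse_Inequalities} with Palm theory for the Poisson process and the change-of-variables formula of Lemma \ref{lem:Integral}. From $\beta_k(\cC_r) \le \beta_k(M) + C_{k+1}(r,+\infty)$, where $C_{k+1}(r,+\infty)$ counts index-$(k+1)$ critical points of $\rho_{\cP_n}^2$ with value exceeding $r^2$, it suffices to show $\E[C_{k+1}(r,+\infty)] \le b_k\, n\Lambda^k e^{-\Lambda}$. By Proposition \ref{prop:CriticalPoint}, each such critical point corresponds uniquely to a generic subset $\cY \subset \cP_n$ of size $k+2$ with $c(\cY)$ well-defined, $0 \in \Delta(\cY)$, and $B_{\rho(\cY)}(c(\cY)) \cap \cP_n = \cY$. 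Applying Mecke's formula for the Poisson process (which exploits the spatial independence described in Section \ref{sec:Poisson}) yields
\[
\E[C_{k+1}(r,+\infty)] = \frac{n^{k+2}}{(k+2)!}\int_{M^{k+2}} \tilde h_r(\by)\, e^{-n\vol(B_{\rho(\by)}(c(\by)))}\, \abs{\dvol_g(\by)},
\]
where $\tilde h_r$ is the indicator that $\by$ is generic, $\rho(\by) \in (r, r_{\max})$, and $0 \in \Delta(\by)$.

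I next apply Lemma \ref{lem:Integral} (with $k+1$ replaced by $k+2$) to reparametrize this integral over tuples $(c, u, V, \bw) \in M \times (r, r_{\max}) \times Gr(k+1, T_cM) \times \mathbb{S}_1(V)^{k+2}$, producing a Jacobian $u^{d(k+1)-1}$ together with Riemannian density factors $\sqrt{\abs{\det g_{\exp_c(uw_i)}}}$. Using Corollary \ref{cor:Volume} to bound $\vol(B_u(c)) \ge \omega_d u^d(1 - C u^2)$ and Lemma \ref{lem:RiemannianMeasureComparison} to replace each Riemannian density by $1 + O(u^2)$, the Grassmannian and unit-sphere integrals collapse to a finite Euclidean constant $J_k = J_k(d)$ (the same one arising in the classical Blaschke--Petkantschin computation restricted by the condition $0 \in \Delta(\bw)$). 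With $\Lambda_u := n\omega_d u^d$, the identity $n^{k+2} u^{d(k+1)-1}\,du = \tfrac{n}{d\, \omega_d^{k+1}}\Lambda_u^k\, d\Lambda_u$ reduces the task to estimating
\[
\frac{J_k(1+o(1))\,n}{d\, \omega_d^{k+1}}\int_\Lambda^{\Lambda^*} t^k \exp\!\bigl(-t\bigl(1 - C(t/(n\omega_d))^{2/d}\bigr)\bigr)\, dt,
\]
with $\Lambda^* := n\omega_d r_{\max}^d$. Since $\Lambda\to\infty$ and $\Lambda r\to 0$ jointly force $r\to 0$ and hence $\Lambda r^2 \to 0$, splitting the integration domain at an intermediate threshold $t_0$ satisfying $\Lambda \ll t_0 \ll n$ makes the correction $C(t/(n\omega_d))^{2/d}$ uniformly $o(1)$ on $[\Lambda, t_0]$, and the standard incomplete-Gamma asymptotic $\int_\Lambda^\infty t^k e^{-t}\,dt \sim \Lambda^k e^{-\Lambda}$ delivers the main term. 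On $[t_0, \Lambda^*]$ one chooses $r_{\max}$ small enough that $1 - Cu^2 \ge 1/2$ throughout, making the contribution super-polynomially smaller than $\Lambda^k e^{-\Lambda}$.

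The main obstacle is this last asymptotic step: the curvature correction in the Riemannian volume bound produces an $e^{+C\Lambda_u u^2}$ factor of the ``wrong'' sign relative to the desired $e^{-\Lambda}$ decay, so one must control it carefully in the dominant range $u \sim r$. The hypothesis $\Lambda r \to 0$ is precisely what guarantees that this correction is uniformly $o(1)$ in that dominant region; away from it, the raw decay $e^{-\Lambda_u}$ with $\Lambda_u \gg \Lambda$ is more than enough to make the contribution negligible.
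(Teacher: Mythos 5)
Your mechanical computation (Palm/Mecke, Lemma \ref{lem:Integral}, the substitution $\Lambda_u = n\omega_d u^d$, the incomplete-gamma asymptotic, and the control of the curvature correction via $\Lambda r \to 0$) matches the paper's Lemma \ref{lem:crit_pts} in spirit and would carry through. But there is a genuine gap at the very first step: you write $\beta_k(\cC_r) \le \beta_k(M) + C_{k+1}(r,+\infty)$ as an unconditional inequality and then bound its expectation. This inequality is not unconditionally true. The Morse inequality of Lemma \ref{lem:MorseInequalities} applies to a function that is Morse min-type on all of $M$, and by Lemma \ref{lem:epsilon} the function $\rho_{\cP_n}^2$ has this property only on $B_{r_{\mt}}(\cP_n)$. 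Unless the balls of radius $r_{\mt}$ (or some $r_0 \le r_{\mt}$) cover $M$, the framework gives no control on $\beta_k(\cC_r) - \beta_k(M)$ by counting critical points, because excess $k$-cycles need not be killed by any index-$(k+1)$ critical point within the regime where the min-type structure and Proposition \ref{prop:CriticalPoint} apply. Your parenthetical restriction ``$\rho(\by) \in (r, r_{\max})$'' in the indicator $\tilde h_r$ silently converts $C_{k+1}(r,+\infty)$ into $C_{k+1}(r,r_{\max})$ without justifying that this suffices.

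The paper repairs this by conditioning on the coverage event $E = \set{B_{r_0}(\cP_n) = M}$ for a carefully chosen $r_0 = r(\omega_d/\alpha\,(1+\abs{\log r}))^{1/d}$. On $E$, the whole manifold is a sublevel set of $\rho_{\cP_n}^2$, the function is globally Morse min-type, and any $k$-cycle of $\cC_r$ not present in $H_k(M)$ must be terminated by a critical point of index $k+1$ in the range $(r,r_0]$, so $\hat\beta_k(r) \le C_{k+1}(r,r_0)$; Lemma \ref{lem:crit_pts} then produces the $O(n\Lambda^k e^{-\Lambda})$ bound. On $E^c$, one uses the trivial combinatorial bound $\beta_k(r) \le \binom{\abs{\cP_n}}{k+1}$, a coverage estimate built from a finite $\tfrac{r_0}{2}$-net, and the specific choice of $r_0$ to show this contribution is $o(n\Lambda^k e^{-\Lambda})$. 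To complete your argument you need to add this dichotomy: the Morse count controls things only after you have ensured coverage, and without coverage you need a separate crude bound whose expectation is negligible for your choice of cutoff radius.
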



To prove Proposition \ref{prop:betti_order} we will use Morse theory discussed in Section \ref{sec:Morse}. The main idea is to use the Morse inequalities \eqref{eq:morse_ineq} and  bound  the number of critical points of $\rho_{\cP_n}$. Defining,
\[
	C_k(r_1, r_2) := \# \text{ critical points $c$ of $\rho_{\cP_n}$ with index $k$, such that $\rho_{\cP_n}(c) \in (r_1,r_2]$},
\]
we prove the following lemma. 


\begin{lem}\label{lem:crit_pts}
Let $n\to \infty$ and $r,r_0\to 0$ such that $r =o(r_0)$, $\Lambda\to \infty$, and $\Lambda_{r_0} r_0^{2}\to 0$, where $\Lambda_{r_0} := \omega_d nr_0^d$.
Then for every $k\ge 1$ we have
\[
\mean{C_{k}(r, r_0)}= O(n \Lambda^{k-1} e^{-\Lambda} ).
\]
\end{lem}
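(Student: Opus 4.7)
The plan is to compute $\mean{C_k(r,r_0)}$ via Mecke's formula for Poisson processes, applied to the combinatorial characterization of critical points in Proposition \ref{prop:CriticalPoint}. A point configuration $\cY = \{y_1,\ldots,y_{k+1}\} \subset \cP_n$ gives an index-$k$ critical point of $\rho_{\cP_n}$ exactly when (i) $\cY$ is generic with well-defined center $c(\cY)$, (ii) $0 \in \Delta(\cY) \subset T_{c(\cY)}M$, and (iii) the open ball $B_{\rho(\cY)}(c(\cY))$ contains no other point of $\cP_n$. The first two conditions depend only on $\cY$, while the third, by the spatial independence of the Poisson process and condition (iii), contributes a factor $e^{-n\vol(B_{\rho(\cY)}(c(\cY)))}$. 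Thus Mecke's formula gives
\begin{equation*}
\mean{C_k(r,r_0)} \;=\; \frac{n^{k+1}}{(k+1)!}\int_{M^{k+1}} h(\by)\,\ind\{r<\rho(\by)\le r_0\}\, e^{-n\,\vol(B_{\rho(\by)}(c(\by)))}\,\abs{\dvol_g(\by)},
\end{equation*}
where $h(\by)$ is the indicator for the generic/convex-hull conditions.

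Next I would invoke the Blaschke--Petkantschin formula of Lemma \ref{lem:Integral} to reparametrize the integration by $(c,u,V,\mathbf{w})$ with $c\in M$, $u=\rho(\by)\in(r,r_0]$, $V\in Gr(k,T_cM)$, and $\mathbf{w}\in(\mathbb{S}_1(V))^{k+1}$. After this change of variables, the integrand splits cleanly: the exponential $e^{-n\vol(B_u(c))}$ depends only on $u$ and $c$, the determinantal $\sqrt{\det g}$ factors together with $\Upsilon_1^{d-k}(\mathbf{w})$ and the genericity/convex-hull indicator give a bounded angular integrand, and $\abs{\dvol_g(c)}$ integrates to $\vol(M)=1$. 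Using Corollary \ref{cor:Volume} and Lemma \ref{lem:RiemannianMeasureComparison}, the Riemannian volume of $B_u(c)$ is pinched between $\omega_d u^d(1\pm C u^2)$, uniformly in $c$, and similarly the sphere density is close to the Euclidean one. Consequently the angular integral is bounded by a constant $K_{k,d}$ depending only on $k$, $d$, and the curvature of $g$, coming essentially from the Euclidean probability that $k+1$ uniform points on $\mathbb{S}^{d-1}$ contain the origin in their convex hull, weighted by $\Upsilon_1^{d-k}$.

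After these reductions the bound becomes
\begin{equation*}
\mean{C_k(r,r_0)} \;\le\; C_{k,d,g}\, n^{k+1}\int_r^{r_0} u^{dk-1}\, e^{-n\omega_d u^d(1 - Cu^2)}\,du.
\end{equation*}
Here I would use the hypothesis $\Lambda_{r_0}r_0^2 \to 0$ to absorb the curvature correction: for $u\le r_0$,
\begin{equation*}
e^{Cn\omega_d u^{d+2}} \;\le\; e^{C\Lambda_{r_0} r_0^2} \;=\; 1+o(1),
\end{equation*}
so the exponential may be replaced by $e^{-\Lambda_u}$ with $\Lambda_u := n\omega_d u^d$ at the cost of a bounded multiplicative factor. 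Substituting $\lambda = \Lambda_u$, one gets $u^{dk-1}du = \lambda^{k-1}/(d(n\omega_d)^k)\,d\lambda$ and the prefactor collapses to a constant multiple of $n$:
\begin{equation*}
\mean{C_k(r,r_0)} \;\le\; C'_{k,d,g}\, n \int_\Lambda^{\Lambda_{r_0}} \lambda^{k-1} e^{-\lambda}\,d\lambda \;\le\; C'_{k,d,g}\, n\,\Gamma(k,\Lambda).
\end{equation*}
Finally, the standard asymptotic $\Gamma(k,\Lambda)\sim \Lambda^{k-1}e^{-\Lambda}$ as $\Lambda\to\infty$ yields the stated bound $O(n\Lambda^{k-1}e^{-\Lambda})$.

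The main obstacle is not the bookkeeping in the above chain, but verifying uniform control of the Riemannian-to-Euclidean corrections across the range $u\in(r,r_0]$, since $r_0$ may be much larger than the scale where the Taylor expansions in \eqref{eq:NormalCoords}--\eqref{eq:sphere_vol} are sharp. The saving grace is precisely that the correction in the exponent is $O(n u^{d+2})$, which stays $o(1)$ under the assumption $\Lambda_{r_0}r_0^2\to 0$; the correction to the polynomial factor $u^{dk-1}$ is harmless because it only multiplies by $1+O(u^2)$. I would need to track these constants carefully so that the final constant $C'_{k,d,g}$ depends only on the metric and on $k,d$, not on the sequences $r,r_0,n$, which is what lets the bound be uniform.
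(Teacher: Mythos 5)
Your proposal is correct and follows essentially the same route as the paper's proof: Palm/Mecke formula, the Blaschke--Petkantschin change of variables of Lemma \ref{lem:Integral}, uniform control of the Riemannian-to-Euclidean corrections via the hypothesis $\Lambda_{r_0}r_0^2\to 0$, and the incomplete-gamma asymptotics. The only cosmetic difference is that the paper substitutes $s=u/r$ and expresses the tail integral through the finite-sum form of the incomplete gamma function, whereas you substitute $\lambda=n\omega_d u^d$ directly; the resulting estimates are the same.
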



\begin{rem}\label{rem:const}
The proofs will make use of various constant values. Some of them will be given a name, while  the ones  whose value is not relevant for the main results will be denoted by $C>0$ (which might depend on $k,d,g$).
\end{rem}

\begin{proof}

Let $c$ be a critical point of the distance function $\rho_{\cP_n}$ with $\rho_{\cP_n}(c) \in (r, r_0]$.  Following the discussion in Section \ref{sec:Preliminaries} and Proposition \ref{prop:CriticalPoint}, we know that $c$ is generated by a subset $\cY\subset\cP_n$, so that $c = c(\cY)$, and $\cY$ satisfies the following:
\begin{equation}\label{eq:crit_pt_cond}
\textrm{(1) } 0 \in \Delta(\cY) \subset T_cM ,\qquad \textrm{(2) } {B(\cY)} \cap \cP = \emptyset,\qquad \textrm{(3) }  r < \rho(\cY)\leq r_0,
\end{equation}
where $\Delta(\cY)$ is defined in  \ref{def:delta}, and $B(\cY) := B_{\rho(\cY)}(c(\cY))$. Note that in this case we have $\rho_{\cP_n}(c) = \rho(\cY)$. Next, we define the following indicator functions:
\begin{equation}\label{eq:h_funcs}
\begin{split}
	h(\cY) &:= \ind\set{ 0 \in \Delta(\cY)},\\
	h_{r, r_0}(\cY) &:= h(\cY)\ind\set{r < \rho(\cY) \le r_0},\textrm{ and }\\
	g_{r, r_0}(\cY, \cP) &:= h_{r,r_0}(\cY)\ind\set{B(\cY)\cap (\cP\backslash \cY) = \emptyset}.
\end{split}
\end{equation}
With these definitions we can now write
\[
	C_k(r, r_0) = \sum_{\substack{\cY\subset\cP_n\\\abs{\cY} = k+1}} g_{r, r_0}(\cY ,\cP_n).
\]
Applying Palm theory to the mean value (see Theorem \ref{thm:prelim:palm}  in the appendix) we have that
\[
\mean{C_k(r, r_0)} = \frac{n^{k+1}}{(k+1)!} \mean{g_{r, r_0}(\cY' ,\cY' \cup\cP_n)},
\]
where $\cY'$ is a set of $(k+1)$ $\iid$ points, uniformly distributed on $M$, and independent of $\cP_n$. Next, the properties of the Poisson process $\cP_n$ imply that
\[
	\cmean{\ind\set{B(\cY')\cap \cP_n = \emptyset}}{\cY'} = \cprob{\cP_n(B(\cY'))=0}{\cY'}= e^{-n\vol(B(\cY'))} .
\]
Therefore, 
\[
\mean{C_k(r, r_0)} = \frac{n^{k+1}}{(k+1)!}\int_{M^{k+1}} h_{r, r_0}(\by) e^{-n\vol(B(\by))} \abs{ \dvol_g(\by)}.
\]

We can now apply Lemma \ref{lem:Integral} with $f(\by)=h(\by) e^{-n\vol(B(\by))} $, and have
\[
\begin{split}
\mean{C_k(r,r_0)} = \frac{n^{k+1}}{(k+1)!}&\int_{M} \abs{ \dvol_g(c) } \int_r^{r_0} du \ u^{dk-1}  \int_{Gr(k, T_{c}M) } \dmu_{k,d}(V) \\
			& \times \left(  \prod_{i=1}^{k+1} \int_{\mathbb{S}_1(V)}  \sqrt{ \abs{ \det(g_{  \exp_c(uw_i) } ) } } \  \abs{ \dvol_{\mathbb{S}_1(V)}(w_i)  } \right) \Upsilon_1^{d-k}(\textbf{w}) f(\exp_c( u \textbf{w})) ,
\end{split}
\]
where $c =c(\by)$, $u = \rho(\by)$, and $\by=\exp_c(u \textbf{w})$. Replacing $f(\by)$ with $h(\exp_c(u \textbf{w}))e^{-n\vol(B_u(c))}$, we have
\[
\begin{split}
\mean{C_k(r,r_0)} = \frac{n^{k+1}}{(k+1)!}&\int_{M} \abs{ \dvol_g(c) } \int_r^{r_0} du \ u^{dk-1}e^{-n \vol(B_u(c))}  \int_{Gr(k, T_{c}M) } \dmu_{k,d}(V) \\
			& \times \left(  \prod_{i=1}^{k+1} \int_{\mathbb{S}_1(V)}  \sqrt{ \abs{ \det(g_{  \exp_c(u w_i) } ) }} \ \abs{  \dvol_{\mathbb{S}_1(V)}(w_i) } \right) \Upsilon_1^{d-k}(\textbf{w}) h(\exp_c( u \textbf{w})).
\end{split}
\]
As the Grassmannian $Gr(k,T_cM)$ is compact, there is a subspace $V_{\max} \subset T_cM$ which maximizes the last integral over $(\mathbb{S}_1(V))^{k+1}$ and we have
\begin{equation}\label{eq:C_k_mean_intermediate1}
\splitb
\mean{C_k(r, r_0)} \leq & C {n^{k+1}} \int_{M} \abs{ \dvol_g(c) } \int_r^{r_0} du \ u^{dk-1} e^{-n\vol(B_u(c))} \\ 
& \times \left(  \prod_{i=1}^{k+1} \int_{\mathbb{S}_1(V_{max})}  \sqrt{ \abs{ \det(g_{ \exp_c(uw_i) } ) }} \abs{  \dvol_{\mathbb{S}_1(V_{max})}(w_i)} \right)  \Upsilon_1^{d-k}(\textbf{w}) h(\exp_c( u \textbf{w})),
\splite
\end{equation}
where $C> 0$ only  depends on the dimension $d$ and the index $k$. Using  \eqref{eq:RiemannianMeasure} and Lemma \ref{lem:approx_sphere_vol}, for each $y_i = \exp_c(u w_i)$, we can write 
\[
\sqrt{ \abs{ \det(g_{y_i} ) } } =  1 - \frac{Ric_{m n}}{3}x^m(y_i) x^n(y_i) + O(u^3) ,
\]
The second-order term above is bounded by $\frac{1}{3} \abs{ Ric } r_0^2$ (in fact only by the values of $Ric$ restricted to $V_{max}$). In addition, $\Upsilon_1(\bw)$ (the $k$-volume of the parallelogram generated by the $w_i$) is  bounded from above, since $\bw$ contains unit vectors. Putting it all back into \eqref{eq:C_k_mean_intermediate1} yields
\[
\splitb
\mean{C_k(r, r_0)}  \leq & C{n^{k+1}} (1+ c_{_R} r_0^2)^{k+1}  \int_{M} \abs{ \dvol_g(c) } \int_r^{r_0} du \ u^{dk-1} e^{-n\vol(B_{u}(c))}\\
 & \times \int_{(\mathbb{S}_1)^{k+1}} \abs{ \dvol_{{\mathbb{S}}_1}(\bv) } h(\exp_c( \bv)) \\ 
 = &  C n^{k+1}\int_{M} \abs{ \dvol_g(c) } \int_r^{r_0} du \ u^{dk-1} e^{-n\vol(B_{u}(c))} ,
\splite
\]
where $c_R$ depends on the metric $g$. The constant in $C$ in the last line includes the product of $(1+ c_{_R} r_0^2)^{k+1}$ with integral of $h$ over the $(k+1)$ spheres. 
Recall from  \eqref{eq:ball_vol} and that for small $u$ we have 
$$\vol(B_{u}(c))= \omega_d u^d \left( 1-\frac{s(c)}{6(d+2)} u^2 + O(u^3) \right).$$
Thus, using the Taylor expansion $e^{ x}=1+ x +O(x^2)$,  and the fact that $u\le r_0 \to 0$, we have
\begin{equation}\label{eq:volume_estimate_normal_coords}
\splitb
e^{-n\vol(B_u(c))} &= e^{-n\omega_d u^d(1- \frac{s(c)}{6(d+2)}u^2 + o(u^2))} \\
&= e^{-n\omega_d u^d} \param{1+ \frac{s(c)}{6(d+2)}n\omega_du^{d+2} + o(n u^{d+2}) } \\
&\le e^{-n\omega_d u^d} \param{1+ s_{\max} n\omega_dr_0^{d+2}} 
\splite
\end{equation}
where $s_{\max}= \sup_{c \in M} \frac{s(c)}{6(d+2)}+\delta $ for some $\delta>0$.  
Applying the change of variables $s=\frac{u}{r}$, and recalling that $\Lambda = n\omega_d r^d$, yields
\begin{equation}\label{eq:C_k_ineq_1}
\splitb
\mean{C_k(r, r_0)} & \leq   C n^{k+1}  r^{dk}  \param{1+ s_{\max} n\omega_dr_0^{d+2}} \int_{M} \abs{ \dvol_g(c) } \int_1^{\frac{r_0}{r}} ds \ s^{dk-1} e^{-\Lambda s^d} ,\\
&= C n \Lambda^k\param{1+ s_{\max} \Lambda_{r_0}r_0^{2}} \int_{M} \abs{ \dvol_g(c) } \int_1^{\frac{r_0}{r}} ds \ s^{dk-1} e^{-\Lambda s^d}.
\splite
\end{equation}
 The last integral is known as the \emph{lower incomplete gamma function} and has a closed form expression which yields,
\begin{eqnarray}\nonumber
\mean{C_k(r, r_0)} & \le & n C (1+ s_{\max} \Lambda_{r_0}r_0^{2}) \param{ \left( 1-e^{-\Lambda_{r_0}}\sum_{j=0}^{k-1}\frac{\Lambda_{r_0}^j}{j!} \right) - \left( 1-e^{-\Lambda}\sum_{j=0}^{k-1}\frac{\Lambda^j}{j!} \right)  } \\ 
& = & n  C (1+ s_{\max}\Lambda_{r_0}r_0^{2}) \param{ e^{-\Lambda}\sum_{j=0}^{k-1}\frac{\Lambda^j}{j!} - e^{-\Lambda_{r_0}}\sum_{j=0}^{k-1}\frac{\Lambda_{r_0}^j}{j!}   }.\label{eq:crit_pts_final}
\end{eqnarray}

 Finally, using the  assumptions that $\Lambda_{r_0}r_0^{2} \rightarrow 0$,  $\Lambda \rightarrow  \infty$, and  $r =o(r_0)$ yields $\mean{C_{k}(r, r_0)}= O(n \Lambda^{k-1} e^{-\Lambda} )$, and that completes the proof.

\end{proof}


We are now ready to prove Proposition \ref{prop:betti_order}.


\begin{proof}[Proof of Proposition \ref{prop:betti_order}]

Let $\hat\beta_k(r) := \beta_k(r)-\beta_k(M)$, then we need to show that $\meanx{\hat\beta_k(r)}\le b_{k} n\Lambda^k e^{-\Lambda}$ for some $b_{k}>0$. 
Using Morse theory, and in particular  Lemma \ref{lem:MorseInequalities}, it is enough to control the number of critical points of index $k+1$ occurring at a radius greater than $r$ (i.e.~$C_{k+1}(r, +\infty)$).

Let $r < r_0 < r_{\max}$  such that  $r=o(r_0)$, and $\Lambda_{r_0}r_0^{2}\to 0$. Let $E$ denote the event that $B_{r_0}(\cP_n)$ covers $M$, then
\begin{equation}\label{eq:mean_beta_k_cond}
	\meanx{\hat\beta_k(r)} = \cmeanx{\hat\beta_k(r)}{E}\prob{E} + \cmeanx{\hat\beta_k(r)}{E^c}\prob{E^c}.
\end{equation}

The proof will now be split into two steps, dealing with evaluating each of the terms in the sum above.

\noindent\textbf{Step 1}: We prove that: $\cmeanx{\hat\beta_k(r)}{E}\prob{E} = O(n \Lambda^k e^{-\Lambda}).$

If the event $E$ occurs, then $\rho_{\cP_n}(x)\le r_0$ everywhere on $M$, and therefore Lemma \ref{lem:MorseInequalities} applied to $\rho_{\cP_n}$ implies that any nonvanishing $k$-cycle in $\cC_r$ which is mapped to a trivial cycle in $M$, is terminated by a critical point of index $k+1$ with value  in $(r,r_0]$.
Therefore, we must have $\hat\beta_k(r) \le  C_{k+1}(r, r_0)$, and then
\eqb \label{eq:beta_bound_crit}
\cmeanx{\hat\beta_k(r)}{E}\prob{E} \le  \cmeanx{ C_{k+1}(r, r_0)}{E}\prob{E} 
\le\meanx{ C_{k+1}(r, r_0)}.
\eqe

Since $\Lambda\to \infty$ and $\Lambda_{r_0}r_0^{2}\to 0$, using Lemma \ref{lem:crit_pts} we have that $\mean{C_{k+1}(r, r_0)}= O(n \Lambda^{k} e^{-\Lambda} )$, and that completes the first step.

\noindent\textbf{Step 2:} We prove that: $\cmeanx{\hat\beta_k(r)}{E^c}\prob{E^c} = o(n \Lambda^k e^{-\Lambda}).$

For any simplicial complex, the $k$-th Betti number is bounded by the number of $k$-dimensional faces (see, for example, \cite{hatcher_algebraic_2002}). Since the number of faces is bounded by $\binom{\abs{\cP_n}}{k+1}$, we have that 
\begin{equation}\label{eq:mean_betak_no_cover}\begin{split}
\cmeanx{ \beta_k(r)}{E^c} \prob{E^c}&\le \cmean{\binom{ \abs{ \cP_n }}{k+1}}{E^c}\prob{E^c} \\
	&= \sum_{m=k+1}^\infty \binom{m}{k+1}\cprob{\abs{\cP_n} = m}{E^c}\prob{E^c} \\
	&=\sum_{m=k+1}^\infty \binom{m}{k+1}\cprob{E^c}{\abs{\cP_n}=m}\prob{\abs{\cP_n} = m},
	\end{split}
\end{equation}
where we used Bayes' Theorem. Since $\cP_n$ is a homogeneous Poisson process with intensity $n$ we have that $\prob{\abs{\cP_n}=m} = \frac{e^{-n}n^m}{m!}$, and also that given $\abs{\cP_n}=m$ we can write $\cP_n$ as a set of $m$ $\iid$ random variables $\cX_m = \set{X_1,\ldots, X_m}$ uniformly distributed on $M$. Therefore,
\[
	\cprob{E^c}{\abs{\cP_n}=m} = \prob{ B_{r_0}(\cX_m) \ne M}.
\]
Next, we will bound this coverage probability.
Let $\cS$ be a $\frac{r_0}{2}$-net of $M$, i.e.~for every $x \in M$ there is a point $s\in \cS$ such that $\dist(x,s)\le \frac{r_0}{2}$.
                                                                                                                                                                                                                                                                                                                                                                                                                                                                                                                                                                                                                                                                                                                                                                                                                                                                                                                                                                                                                                                                                                                                                                                                                                                                                                                                                                                                                                                                                                                                                                                                                                                                                                                                                                                                                                                                                                                                                                                                                                                                                                                                                                                                                                                                                                                                                                                                                          We can find such a $\frac{r_0}{2}$-net with $\abs{\cS} \le c_d r_0^{-d}$, where $c_d$ is a constant that depends only on $d$ and the metric $g$. Note that if for every $s\in\cS$ there exists  $X_i\in \cX_m$ with $\rho(s,X_i)\le \frac{r_0}{2}$, then for every $x\in M$ we have
\[
\dist(x,X_i) \le \dist(x,s) + \dist(s,X_i) \le r_0,
\]
and therefore $B_{r_0}(\cX_m) = M$. Thus, if $B_{r_0}(\cX_m) \ne M$ then there exists $s\in \cS$ with $\rho_{_{\cX_m}}(s) > \frac{r_0}{2}$, which yields
\[
\prob{ B_{r_0}(\cX_m) \ne M )}\le \sum_{s\in\cS}\prob{\rho_{\cX_m}(s) >\frac{r_0}{2}} \le c_d r_0^{-d} (1-\alpha r_0^d )^m ,
\]
for any $\alpha \leq  2^{-d}\omega_d \left(1- s_{max} r_0^2 \right)$, using Corollary \ref{cor:Volume}. Putting it all back into \eqref{eq:mean_betak_no_cover} we have
\[
\splitb
\cmeanx{\beta_k(r)}{E^c} \prob{E^c} & \le  \sum_{m=k+1}^\infty \binom{m}{k+1} c_d r_0^{-d} (1-\alpha r_0^d)^m \frac{e^{-n}{n^m}}{m!} \\
& =  C r_0^{-d}(n(1-\alpha r_0^d))^{k+1} e^{-\alpha nr_0^d} \\
&\le C r_0^{-d} n^{k+1} e^{-\alpha nr_0^d}.
\splite
\]
To finish the proof, we  take $r_0 = r \left( \frac{\omega_d}{\alpha} \left( 1 + \abs{ \log r  } \right) \right)^{1/d}$,
and use the assumption that $\Lambda r \to 0$. In particular, one can show that: (a) $r=o(r_0)$, (b) $\Lambda_{r_0}r_0^{2}\to 0$, and (c) $r_0^{-d} n^{k+1} e^{-\alpha nr_0^d} = o(n e^{-\Lambda}\Lambda^k)$.
Therefore,
\begin{equation}\label{eq:bound_bk_Ec}
\cmeanx{\beta_k(r)}{E^c} \prob{E^c}=  o(n e^{-\Lambda}\Lambda^k).
\end{equation}
Finally, recall that $\hat\beta_k(r) = \beta_k(r) -\beta_k(M)$. Note that $\cmean{\beta_k(M)}{E^c}= \beta_k(M)$, and in addition, similar calculations to the ones above yield $\prob{E^c} = O(e^{-\alpha n r_0^d}) = o(n\Lambda^k e^{-\Lambda})$.
These facts together with \eqref{eq:bound_bk_Ec} show that $\cmeanx{\hat\beta_k(r)}{E^c} \prob{E^c}=  o(n e^{-\Lambda}\Lambda^k)$, and conclude the proof.\\
\end{proof}


\begin{rem}
Notice that if the scalar curvature is everywhere negative and  instead of requiring that $n r^{d+2} \rightarrow 0$ take $n r^{d+3} \rightarrow 0$, then the above bound gets smaller, namely:
$$\meanx{\beta_k(r)} \le \beta_k(M) + b_{k}(1+s_{\max} \Lambda_{r_0} r_0^2){n\Lambda^k e^{-\Lambda}},$$
with $s_{\max}= \delta + \sup_{c \in M} \frac{s(c)}{6(d+2)}<0$ (i.e. we choose $\delta$ so that $s_{\max}<0$). It is therefore conceivable that the scalar curvature affects some lower-order term the phase transition of Theorem \ref{thm:main}.
\end{rem}


\section{Expected Betti Numbers - Lower Bound}\label{sec:lower}


In this section we compute a lower bound for the Betti numbers of $\cC_r$ in terms of $\Lambda$. While in Section \ref{sec:upper} we obtained an upper bound by making use of the Morse inequalities to simply count critical points, in order to obtain a lower bound we must proceed differently, since even if there are many critical points it is not clear which homology degree they contribute to.
Thus, we shall instead consider a special type of critical points that are guaranteed to generate nontrivial cycles. These were first introduced in \cite{bobrowski_vanishing_2015} and named $\Theta$-cycles, after their unique structure. We start by stating the main result in this section.


\begin{prop}\label{prop:betti_order2}
Let $n\to\infty$ and $r\to 0$ such that  $\Lambda\to \infty$ and $\Lambda r^{2} \to 0$, then for every $1\le k\le  d-1$ there exists $a_{k}>0$ (depending only on $k,d$ and the metric $g$) such that
\[
\meanx{\beta_k(r)} \ge a_{k} n\Lambda^{k-2}e^{-\Lambda}.
\]
Moreover, if $(M,g)$ has everywhere positive scalar curvature, one just needs to assume that $nr^{d+2}$ stays bounded.
\end{prop}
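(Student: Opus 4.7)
To obtain a lower bound for $\mean{\beta_k(r)}$ we cannot simply apply the Morse inequalities in reverse, because an index $k$ critical point can either create a new $k$-cycle or kill an existing $(k-1)$-cycle. Following \cite{bobrowski_vanishing_2015}, the plan is to isolate a family of configurations of points of $\cP_n$---the $\Theta$-cycles---that are guaranteed to produce a nontrivial $k$-cycle in $\cC_r$ that is \emph{not} killed by any higher-order chain. Counting these configurations will yield the claimed lower bound.

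\textbf{$\Theta$-cycles and their independence.} A $\Theta$-cycle will be a subset $\cY \subset \cP_n$ of a fixed constant number of points (depending only on $k$ and $d$), lying inside a small ball $B_\rho(c)$ of radius $\rho \le r$ around some ``center'' $c$, and arranged so that (a) they form two (or more) index-$k$ critical configurations of $\rho_{\cP_n}^2$ sharing a common face, and (b) the ball $B_\rho(c)$ contains no other points of $\cP_n$. Condition (a) guarantees that $\cY$ spans a nontrivial $k$-cycle $\gamma(\cY)$ locally in $B_r(\cY)$; condition (b) guarantees that this local cycle cannot be filled in by any $(k+1)$-chain from $\cC_r$ supported near $\cY$. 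If two $\Theta$-cycles $\cY_1,\cY_2$ have well-separated centers (distance $> 3r$), then their local support balls are disjoint and the homology classes $[\gamma(\cY_1)], [\gamma(\cY_2)]$ lie in the direct summand of $H_k(\cC_r)$ corresponding to separate path components of $B_r(\cP_n)$, hence are linearly independent. A Mayer--Vietoris argument over this decomposition will then show that the number of disjoint $\Theta$-cycles is a lower bound for $\beta_k(\cC_r)$.

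\textbf{Counting via Palm theory and Blaschke--Petkantschin.} Let $N_\Theta$ denote the number of such (disjoint) $\Theta$-cycles in $\cP_n$. Writing $N_\Theta = \sum_{\cY \subset \cP_n} g_\Theta(\cY,\cP_n)$ for a suitable indicator $g_\Theta$ encoding the geometric shape, emptiness of $B_\rho(c(\cY))$, and the separation condition, Palm theory (Theorem~A.1) converts $\mean{N_\Theta}$ into an integral of the form
\[
\frac{n^m}{m!}\int_{M^m} h_\Theta(\by)\, e^{-n\vol(B_{\rho(\by)}(c(\by)))}\, \abs{\dvol_g(\by)},
\]
where $m$ is the number of points in the configuration. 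The Riemannian change of variables of Lemma~\ref{lem:Integral} reparametrizes this integral in terms of a center $c \in M$, a radius $u \in (0,r]$, a Grassmannian direction $V$, and a tuple of unit vectors on $\mathbb{S}_1(V)$, exactly in the style of Section~\ref{sec:upper}. The curvature corrections to $\sqrt{\abs{\det g}}$ and to $\vol(B_u(c))$ from Lemmas~\ref{lem:approx_sphere_vol}--\ref{lem:RiemannianBallsComparison} and Corollary~\ref{cor:Volume} produce multiplicative factors of $1+O(r^2)$, which are negligible since $\Lambda r^2 \to 0$. Substituting the expansion \eqref{eq:volume_estimate_normal_coords} for $e^{-n\vol(B_u(c))}$ and changing variables $s = u/r$ reduces the remaining integral to a lower incomplete gamma expression; tracking the combinatorial constants shows it is bounded below by a positive multiple of $n\Lambda^{k-2}e^{-\Lambda}$. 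The separation (disjointness) condition on distinct $\Theta$-cycles costs only a constant factor, by a standard inclusion-exclusion versus a second-moment bound on overlapping configurations.

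\textbf{Main obstacle.} The hardest part is checking that the local topological argument identifying a $\Theta$-configuration with a nontrivial $k$-cycle---valid in $\R^d$ and, by translation invariance, on the flat torus---survives in the general Riemannian setting. The issue is that whether $k+1$ balls of radius $r$ around points in $M$ have a common intersection depends subtly on the geometry, and in particular the argument that ``two critical $k$-configurations sharing a common face form a $k$-sphere in the nerve'' uses the Euclidean structure of the tangent space. The fix is to work in normal coordinates centered at $c(\cY)$, rescale by $r$, and use Lemma~\ref{lem:RiemannianBallsComparison} to sandwich the Riemannian union of balls between two Euclidean unions of balls of radii $(1 \pm \nu r)r$; the nerve of each of these is combinatorially the same as the nerve of the rescaled Euclidean configuration, provided the configuration is generic (which can be enforced inside $h_\Theta$). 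The Nerve Lemma~\ref{lem:nerve} then transports the local Euclidean $\Theta$-cycle argument to the Riemannian setting, completing the proof. The sharper statement for positively curved manifolds follows by exploiting that the $O(r^2)$ curvature corrections to $\vol(B_u(c))$ are then of a favorable sign in the exponent.
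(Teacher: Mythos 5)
Your high-level strategy matches the paper's: construct a family of local point configurations (``$\Theta$-cycles'') that are guaranteed to create nontrivial $k$-cycles that persist to radius $r$, and count them via Palm theory plus the Blaschke--Petkantschin change of variables (Lemma~\ref{lem:Integral}). However, there are two substantive gaps.

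First, you never derive the exponent $k-2$. If one integrates over radii $u\in(0,r]$ and imposes only the index-$k$ criticality and an empty-ball condition, the Palm/Blaschke--Petkantschin computation produces $n\Lambda^{k}e^{-\Lambda}$, not $n\Lambda^{k-2}e^{-\Lambda}$. The two lost powers of $\Lambda$ are not ``combinatorial constants'' to be tracked; they are the crux of the lower bound. In the paper's proof they arise from the careful calibration of Lemma~\ref{lem:Betas}: in order for the $\Theta$-cycle born at radius $\rho(\cY)\le r$ to still be uncovered at radius $r$, one needs $\rho(\cY)\in(r_1,r]$ with $r_1\approx r(1-\xi^2/2c_g^2)$, while the empty-ball radius must be $r_2\approx r(1+\xi)$; the shell integral $\int_{r_1/r}^{1}s^{dk-1}\,ds\sim\xi^2$, and the empty-ball cost $e^{-\Lambda(1+\xi)^d}$ forces $\xi\lesssim\Lambda^{-1}$. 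Taking $\xi=\Lambda^{-1}$ yields exactly the factor $\Lambda^{-2}$. Without engaging with this trade-off your computation does not produce the claimed bound, and (since the claimed bound is what matches the lower threshold $\log n+(k-2)\log\log n$ in Theorem~\ref{thm:main}) the miss is not cosmetic.

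Second, your description of the $\Theta$-cycle construction differs from the paper's and is not obviously usable as stated. The paper's $\Theta$-cycle is a \emph{single} $(k+1)$-point set $\cY$ whose center $c(\cY)$ is an index-$k$ critical point, plus an annulus coverage condition $A_\phi(\cY)\subset B_{\rho(\cY)}(\cP)$ (Lemma~\ref{lem:theta_cycle}), and the probabilistic cost of the annulus coverage is handled by a separate net argument (Step~1 of Lemma~\ref{lem:theta_cycle_order}) which you do not mention. You instead posit a configuration consisting of ``two (or more) index-$k$ critical configurations sharing a common face'' producing a $k$-sphere in the nerve, together with a disjointness requirement and a Mayer--Vietoris argument for linear independence. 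That is a genuinely different route; it could in principle be made to work, but it involves a larger point configuration and would change the exponent of $\Lambda$ in the count, so it is not a drop-in replacement for the paper's argument. In the paper there is no disjointness condition: independence comes from the fact that each $\Theta$-cycle introduces an \emph{uncovered} $k$-simplex, which can never be a boundary, and the Riemannian input that keeps it uncovered is the excess inequality of Lemma~\ref{lem:intersect} (Appendix~D), which controls $\dist(c(\cY),x)$ for $x\in B_r^\cap(\cY)$ via a Hessian bound on $\rho^2$; this, rather than the ball-sandwiching of Lemma~\ref{lem:RiemannianBallsComparison}, is the key geometric adaptation for the lower bound.
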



As mentioned earlier, the proof uses the strategy of \cite{bobrowski_vanishing_2015}, and follows from combining the following lemmas. We start with some definitions. Let $\mathcal{P} \subset M$ and let $\mathcal{Y} \subset \mathcal{P}$ be a generic set. For $\alpha>0$  define the closed annulus
$$A_{\alpha}(\cY)= { B_{\rho(\cY)}(c(\cY)) } \backslash B^\circ_{\alpha \rho(\cY)}(c(\cY)),$$
where $B_r^\circ(p)$ is an open ball. Notice that $\alpha>0$ represents a scale invariant quantity. 
The following lemma is taken from \cite{bobrowski_vanishing_2015} where it is proved for the torus. However, the proof remains the same for any compact Riemmanian manifold $M$.


\begin{lem}[Lemma 7.1 in \cite{bobrowski_vanishing_2015}]\label{lem:theta_cycle}
Let $\mathcal{P} \subset M$, and let $\mathcal{Y} \subset \mathcal{P}$ be a set of  $k+1$ points, such that $c(\cY)$ is a critical point of index $k$. Define
$$\phi = \phi(\cY) := \frac{1}{2 \rho(\cY)} \min_{v \in \partial \Delta(\cY)} \abs{ v } ,$$
If $\rho(\cY) < r_{\max}$ and $A_{\phi}(\cY) \subset B_{\rho(\cY)}(\cP)$, then the critical point $c(\cY)$ generates a new nontrivial cycle in $H_{k}(B_{\rho(\cY)}(\cP))$. 
\end{lem}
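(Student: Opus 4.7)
The plan is to exhibit a new nontrivial $k$-cycle in $H_k(B_{\rho(\cY)}(\cP))$ generated by the critical event at $c := c(\cY)$. Set $\rho := \rho(\cY)$, $\phi := \phi(\cY)$, and $\sigma := [y_0,\dots,y_k]$. By the Nerve Lemma \ref{lem:nerve} it suffices to work in the \cech complex $\cC_\rho(\cP)$. Since $c \in \bigcap_{y \in \cY} B_\rho(y)$, the $k$-simplex $\sigma$ is present in $\cC_\rho(\cP)$; and by the minimality in the definition of $\rho(\cY)$ (Lemma \ref{lem:center}), combined with condition (3) of Proposition \ref{prop:CriticalPoint}, $\sigma$ is absent from $\cC_{\rho'}(\cP)$ for any $\rho' < \rho$. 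Thus $\sigma$ is inserted into the filtration at exactly time $\rho$.

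The standard persistent-homology dichotomy now applies: inserting the $k$-simplex $\sigma$ at time $\rho$ either creates a new class in $H_k$, if $\partial\sigma$ is already a boundary in $\cC_\rho(\cP) \setminus \{\sigma\}$, or it destroys a class in $H_{k-1}$ otherwise. My aim is to show that the annular-coverage hypothesis $A_\phi(\cY) \subset B_\rho(\cP)$ forces the first alternative. First, I would pass to Euclidean normal coordinates at $c$, using Lemma \ref{lem:RiemannianBallsComparison} together with $\rho < r_{\max}$, so that the balls in question may be treated as Euclidean up to topologically negligible distortion. In this local model, the definition $\phi = \frac{1}{2\rho}\min_{v \in \partial \Delta(\cY)} |v|$ is chosen so that the non-degeneracy $0 \in \mathrm{int}(\Delta(\cY))$ (built into the index-$k$ hypothesis, which ensures $\phi>0$) yields a concrete retraction of the "star" $\bigcup_{y \in \cY} B_\rho(y)$ near $c$: after puncturing at $c$, the union retracts onto a topological $(k-1)$-sphere $\Sigma$ concentric with $c$ and lying inside $A_\phi$. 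Under the Nerve-Lemma correspondence, this geometric retraction produces a chain homology $\partial\sigma \sim [\Sigma]$ inside $\cC_\rho(\cP) \setminus \{\sigma\}$.

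The endgame is then clean. Since $\Sigma \subset A_\phi \subset B_\rho(\cP)$ and $A_\phi$ deformation retracts onto $S_\rho(c) \cong S^{d-1}$, and since the standing hypothesis $1 \le k \le d-1$ gives $k-1 \le d-2 < d-1$, the sphere $\Sigma$ is null-homotopic inside the covered annulus. Approximating this null-homotopy by the $\cP$-balls that cover $A_\phi$ realises it as a simplicial $k$-chain $\tau$ in $\cC_\rho(\cP) \setminus \{\sigma\}$ with $\partial\tau = [\Sigma]$; combined with the chain homology from the previous paragraph, $\partial\sigma$ is itself a boundary in $\cC_\rho(\cP) \setminus \{\sigma\}$, and inserting $\sigma$ therefore produces a new $k$-class, proving the lemma. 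The main obstacle is the step $\partial\sigma \sim [\Sigma]$: rigorously producing the retraction of the star at $c$ onto a concentric $(k-1)$-sphere in $A_\phi$ and promoting it to a simplicial chain. Both hypotheses are used here in an essential way, namely $\phi > 0$ (which prevents the star from collapsing pathologically) and the full coverage of $A_\phi$ (which supplies overlapping $\cP$-balls to realise the bounding chain simplicially). This is essentially the argument of \cite{bobrowski_vanishing_2015}, transferred to the Riemannian setting via the normal-coordinate comparisons of Section \ref{sec:ineqs}.
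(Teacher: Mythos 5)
The paper does not actually re-prove this lemma; it explicitly states that it is Lemma 7.1 of \cite{bobrowski_vanishing_2015} and that ``the proof remains the same for any compact Riemannian manifold.'' So the only internal evidence of the intended argument is the remark in the proof of Lemma \ref{lem:Betas} that ``every $\Theta$-cycle introduces an uncovered $k$-simplex $\Delta$ \ldots\ Since uncovered simplexes cannot be boundaries\ldots''. Your structure --- pass to the \cech complex, observe that $\sigma$ is inserted at exactly $\rho(\cY)$ and is then not a face of any $(k+1)$-simplex (by Proposition \ref{prop:CriticalPoint}(3)), and use the insertion dichotomy so that the annulus coverage forces a birth rather than a death in $H_{k-1}$ --- is the same reasoning, just reorganized through the persistent-homology lens, which is a legitimate and arguably cleaner phrasing.

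The genuine gap is the middle step, and you rightly flag it: the assertion ``$\partial\sigma \sim [\Sigma]$ inside $\cC_\rho(\cP)\setminus\{\sigma\}$'' and the subsequent ``approximating this null-homotopy \ldots realises it as a simplicial $k$-chain $\tau$ \ldots with $\partial\tau=[\Sigma]$'' conflate a geometric $(k-1)$-sphere in $B_\rho(\cP)$ with a simplicial chain in the nerve. The Nerve Lemma gives a homotopy equivalence $|\cC_\rho(\cP)|\simeq B_\rho(\cP)$, but it does not transport the individual chain $\partial\sigma$ to the concrete sphere $\Sigma$, and in fact $\cC_\rho(\cP)\setminus\{\sigma\}$ has no matching geometric model under the nerve correspondence (removing one simplex from the nerve does not correspond to removing some open set from the union of balls). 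You also need $\tau$ to avoid $\sigma$, which is not automatic once the nullhomotopy in $A_\phi$ is ``simplicialised.'' Filling the gap requires either (i) constructing the $\Theta$-cycle directly as a simplicial cycle out of simplices spanned by the points of $\cP$ covering $A_\phi$ together with $\sigma$, and checking $\sigma$ appears with nonzero coefficient, or (ii) working entirely with singular chains in $B_\rho(\cP)$: compare $B_{\rho-\eps}(\cP)$ with $B_\rho(\cP)$ (the former is the geometric model for $\cC_\rho(\cP)\setminus\{\sigma\}$), show the relevant small $(k-1)$-sphere around $c$ bounds in $B_{\rho-\eps}(\cP)$ using the covered annulus, and match this with the local-homology jump at $c$. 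Either way, the geometric-to-simplicial bridge is where the substance of the proof lies, and it is not carried out here.
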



The cycles created this way are called $\Theta$-cycles, and the the idea behind the proof of Proposition \ref{prop:betti_order2} is to count them. Let $\eps>0$, and define $\beta^{\eps}_k(r)$ to be the number of $\Theta$-cycles generated by those $\mathcal{Y}$, such that
\[
\text{(C1)}\ \rho(\cY) \in (r_1,r] ,\quad \text{(C2)}\ B_{r_2}(c(\cY)) \cap \cP = \cY, \quad\text{and} \quad \text{(C3)}\ \phi(\cY) \ge \eps,
\]
where $r_2>r>r_1>0$ are positive real constants (to be determined later).
The next lemma shows that indeed, the $\Theta$-cycles counted by $\beta^{\eps}_k$ provide a lower bound on the Betti numbers. In the statement of the next result, the constant $c_g$ is taken from Lemma \ref{lem:intersect}.


\begin{lem}\label{lem:Betas}
Let $r, r_2 \in \mathbb{R}^+$ with $r_2 > r$, then for $r_1 > r \sqrt{ 1 -\frac{1}{c_g^2} \left( \frac{r_2}{r} -1 \right)^2 }$ and any $\eps \in (0,1)$, we have $\beta_k(r) \geq \beta^{\eps}_k (r)$.
\end{lem}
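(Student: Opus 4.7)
The plan is to show that every subset $\cY$ generating a $\Theta$-cycle counted in $\beta_k^\eps(r)$ produces a nontrivial homology class in $H_k(\cC_r)\cong H_k(B_r(\cP))$, and that these classes are linearly independent. The construction of the class itself is handled by Lemma~\ref{lem:theta_cycle}: I would first verify its hypothesis $A_\phi(\cY) \subset B_{\rho(\cY)}(\cP)$. Since $c(\cY)$ is an index-$k$ critical point (so $0 \in \Delta(\cY)$), the $\rho(\cY)$-balls around the points of $\cY$ cover the annulus $A_\phi(\cY)$ in the Euclidean tangent picture; the lower bound $\phi(\cY) \geq \eps$ from (C3) prevents the configuration from being degenerate, and the Riemannian comparison results (Lemmas~\ref{lem:RiemannianMeasureComparison} and~\ref{lem:RiemannianBallsComparison}) transfer this coverage into the Riemannian setting when $r$ is small. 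Lemma~\ref{lem:theta_cycle} then delivers a nontrivial class $\gamma_\cY \in H_k(B_{\rho(\cY)}(\cP))$.

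The main step is showing that $\gamma_\cY$ survives the inclusion $B_{\rho(\cY)}(\cP) \hookrightarrow B_r(\cP)$. Geometrically, the $\Theta$-cycle $\gamma_\cY$ encloses the ``core hole'' $B^\circ_{\phi\rho(\cY)}(c(\cY))$, and it becomes trivial in $H_k(B_r(\cP))$ only if some ball $B_r(p)$ with $p \in \cP$ reaches into this hole. Condition (C2) rules out any such $p$ inside $B_{r_2}(c(\cY))$, so any offending $p$ must satisfy $\dist(p,c(\cY))>r_2$. On the other hand, for $B_r(p)$ to meet the core hole one would need $\dist(p,c(\cY)) \le r + \phi\rho(\cY) \le r + \rho(\cY)$. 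The assumption $r_1 > r\sqrt{1 - c_g^{-2}(r_2/r - 1)^2}$, together with Lemma~\ref{lem:intersect} controlling Riemannian ball intersections via $c_g$, is calibrated precisely so that these two inequalities are incompatible whenever $\rho(\cY) > r_1$. Hence $\gamma_\cY$ is not killed at radius $r$.

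Linear independence of the classes $\gamma_{\cY_i}$ for distinct $\cY_i$ then follows from the isolation in (C2): each $\gamma_{\cY_i}$ is supported near its own center $c(\cY_i)$, and cycles supported near essentially disjoint isolation balls are independent in $H_k(B_r(\cP))$ by a standard Mayer--Vietoris argument (with a small additional check when two isolation balls happen to overlap but the $\cY_i$ remain distinct). The main obstacle is the second step: the Pythagorean-looking inequality on $r_1$ is not ad hoc but encodes the precise geometric constraint relating $c(\cY)$, a hypothetical outside point $p$ at distance $>r_2$, and the tangential reach of $B_r(p)$ into the core hole of radius $\phi\rho(\cY)$. Making this bound sharp in the Riemannian setting is what forces the appearance of $c_g$ from Lemma~\ref{lem:intersect}, and is the only nontrivial piece of bookkeeping in the proof.
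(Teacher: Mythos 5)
Your outline for the first step (extracting the class from Lemma~\ref{lem:theta_cycle}) is fine, but the hypothesis $A_\phi(\cY)\subset B_{\rho(\cY)}(\cP)$ requires no separate verification: it is part of the definition of $\beta_k^\eps(r)$ via the indicator $\ind\{A_\eps(\cY)\subset B_{\rho(\cY)}(\cP)\}$ inside $g_r^\eps$, together with $\phi\ge\eps \Rightarrow A_\phi\subset A_\eps$.

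The genuine gap is in your second step. Your claim that ``$\gamma_\cY$ becomes trivial in $H_k(B_r(\cP))$ only if some ball $B_r(p)$ reaches into the core hole'' is an intuition, not an argument: there is no duality or linking lemma available in the paper to turn ``the hole is not covered'' into ``the specific chain $\gamma_\cY$ is not a boundary,'' and such a statement is false in general for cycles that merely sit near an uncovered region. The paper's actual mechanism is combinatorial and much simpler: the $\Theta$-cycle contains the $k$-simplex $[\cY]$ with nonzero coefficient, and $[\cY]$ is \emph{uncovered} (not a face of any $(k+1)$-simplex) in $\cC_{\rho(\cY)}$; an uncovered $k$-simplex cannot appear in any image of $\partial_{k+1}$, so any cycle containing it is non-bounding. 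The work is then to show $[\cY]$ stays uncovered in $\cC_r$, i.e.\ $B_r^{\cap}(\cY)$ stays disjoint from $B_r(p)$ for all $p\in\cP\setminus\cY$. Your geometric estimate is also miscalibrated: you compare $r_2$ to $r+\phi\rho(\cY)$ (the reach of an outside ball into the core hole), but the hypothesis on $r_1$ is instead calibrated against $c_g\sqrt{r^2-r_1^2}$, the bound from Lemma~\ref{lem:intersect} on $\dist(c(\cY),x)$ for $x\in B_r^\cap(\cY)$. Indeed, unwinding $r_1 > r\sqrt{1-c_g^{-2}(r_2/r-1)^2}$ gives exactly $r_2 - c_g\sqrt{r^2-r_1^2} > r$, which combined with $\dist(p,c(\cY))\ge r_2$ (condition (C2)) and the triangle inequality yields $\dist(p,x)>r$. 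Your version of the bound would produce a different (and not obviously sufficient) constraint. Finally, linear independence comes for free from the uncovered-simplex argument --- each $[\cY_i]$ is a distinct uncovered simplex appearing in its own $\gamma_{\cY_i}$ --- so the Mayer--Vietoris machinery you invoke is unnecessary once the right mechanism is in place.
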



\begin{proof}
We need to show that any such $\Theta$-cycle created prior to $r$ still exists at $r$ and so gives rise to a nonzero element in $H_k(\cC_r)$, i.e. it contributes to $\beta_k(r)$.

The proof of Lemma \ref{lem:theta_cycle} uses the fact that every $\Theta$-cycle introduces an uncovered $k$-simplex $\Delta$  in $\cC_{r_1}$ (i.e.~$\Delta$ is not a face of any $(k+1)$-simplex). Since uncovered simplexes cannot be boundaries, it is thus enough to show that $\Delta$ is still uncovered at radius $r$. This requires that $B_r^{\cap}(\cY)$, does not intersect any of the balls $B_{r}(p)$, for $p \in \cP\backslash \cY$. Using Lemma \ref{lem:intersect}, and condition (C1) above, we have that for all $x \in B_r^\cap(\cY)$,
$$\dist( c(\cY) , x) \leq c_g \sqrt{r^2-\rho(\cY)^2} \le c_g \sqrt{r^2-r_1^2}.$$
Using condition (C2), for all $p\in \cP\backslash \cY$ we have $\dist(p,c(\cY)) \geq r_2$. Thus, using the triangle inequality we have
$$\dist(p, x) \geq \dist(p, c(\cY)) - \dist(c(\cY),x) \geq r_2 - c_g \sqrt{r^2-r_1^2}.$$
Hence, if  we take  $r_1 > r \sqrt{ 1 -\frac{1}{c_g^2} \left( \frac{r_2}{r} -1 \right)^2 }$, we get that $\dist(p, x) > r$, which implies that $B_r(\cP \backslash \cY) \cap B_r^\cap(\cY) = \emptyset$. Thus, $\Delta$ is still uncovered at radius $r$, which completes the proof.\\
\end{proof}


Next, we define the following (related to conditions (C1)-(C3) above).
\[
\begin{split}
	h_r^{\eps}(\cY) &:= h_{r_1,r}(\cY)\ind\set{\phi(\cY) \ge \eps},\\
	g_r^\eps(\cY,\cP) & := h_r^\eps(\cY,\cP)\ind\set{B_{r_2}(c(\cY))\cap (\cP\backslash \cY) = \emptyset}\ind\set{A_{\eps}(\cY) \subset B_{\rho(\cY)}(\cP )}.
	\end{split}
\]
Thus, we can write
\begin{equation}\label{eq:Beta_k_Epsilon}
	\beta_k^\eps(r) = \sum_{\cY\subset \cP_n} g_r^\eps(\cY,\cP_n).
\end{equation}

The last result we need before proving Proposition \ref{prop:betti_order2} is the following lemma.


\begin{lem}\label{lem:theta_cycle_order}
Let $\eps >0$ be sufficiently small, and let $r>0$ be such that $\Lambda\to \infty$, and $\Lambda r^{2}\to 0$. Then there exists $a_k>0$, and there exists a choice of $r_1,r_2$ with $0<r_1<r<r_2$, such that 
\[
	\meanx{\beta_k^\eps(r)} \geq a_k n\Lambda^{k-2}e^{-\Lambda}.
\]
\end{lem}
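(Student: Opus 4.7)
The plan is to apply Palm theory (Mecke's formula for Poisson processes) together with the Blaschke--Petkantschin change of variables of Lemma \ref{lem:Integral}, and then choose $r_1, r_2$ to balance the width of the $\rho$-window against the emptiness probability of $B_{r_2}$. From \eqref{eq:Beta_k_Epsilon} and Theorem \ref{thm:prelim:palm},
\[
\meanx{\beta_k^\eps(r)} = \frac{n^{k+1}}{(k+1)!} \int_{M^{k+1}} h_r^\eps(\by)\, Q(\by)\, \abs{\dvol_g(\by)},
\]
where $\by$ is independent of $\cP_n$ and $Q(\by) := \probx{B_{r_2}(c(\by))\cap \cP_n = \emptyset \text{ and } A_\eps(\by) \subset B_{\rho(\by)}(\by \cup \cP_n)}$. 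Applying Lemma \ref{lem:Integral} converts this to an integral over $(c,u,V,\bw)\in M\times (r_1,r]\times Gr(k,T_cM)\times (\mathbb{S}_1(V))^{k+1}$ with the Jacobian $u^{dk-1}\Upsilon_1^{d-k}(\bw)\prod_i\sqrt{\det g_{\exp_c(uw_i)}}$, whose density factors are uniformly bounded between positive constants for $u$ small by Lemma \ref{lem:RiemannianMeasureComparison}.

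I would take $r_2 = r(1+\delta)$ and $r_1 = r\sqrt{1-\delta^2/c_g^2}$ with $\delta = 2/(d\Lambda) \to 0$. This choice guarantees: (i) Lemma \ref{lem:Betas} applies, so the counted $\Theta$-cycles survive to radius $r$; (ii) $r_2 < 2r_1$ for $\Lambda$ large, so Poisson points in the shell $B_{2u}(c)\setminus B_{r_2}(c)$ can reach into $B_u(c)$ and help cover the annulus; and (iii) $\Lambda_{r_2} = \Lambda(1+\delta)^d = \Lambda + 2 + o(1)$, whence $e^{-\Lambda_{r_2}} \geq C_1 e^{-\Lambda}$ by Corollary \ref{cor:Volume}. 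Splitting $\cP_n$ into its independent Poisson restrictions to $B_{r_2}(c)$ and to the shell, the ``empty inner ball'' event contributes $e^{-n\vol(B_{r_2}(c))} \geq C_1 e^{-\Lambda}$. For the annulus-coverage event, I would restrict the $\bw$-integral to a compact subset $\mathcal{W}\subset(\mathbb{S}_1(V))^{k+1}$ of configurations with $\phi(\bw) > \eps$ and $0 \in \mathrm{int}\,\Delta(\bw)$. On $\mathcal{W}$, the uncovered set $A_\eps(\cY)\setminus B_{\rho(\cY)}(\cY)$ can be covered, via Lemma \ref{lem:RiemannianBallsComparison} for the Euclidean-to-Riemannian transfer, by a bounded number $N=N(\eps,k,d)$ of balls of radius $\rho(\cY)$ centered in the shell; since the shell has Poisson intensity $\Theta(\Lambda)\to\infty$, the probability that each of these $N$ locations is occupied is bounded below by some $C_2 = C_2(\eps,k,d,g) > 0$, and so $Q(\by) \geq C_1 C_2 e^{-\Lambda}$ on $\mathcal{W}$.

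Putting everything together and using $\vol(M)=1$, the bounded Jacobian on $\mathcal{W}$, and the estimate $\int_{r_1}^r u^{dk-1}\,du \geq r_1^{dk-1}(r-r_1) \geq C_3 r^{dk}\delta^2 = C_3' r^{dk}/\Lambda^2$, one concludes
\[
\meanx{\beta_k^\eps(r)} \geq c\, n^{k+1}\, r^{dk}\, \Lambda^{-2}\, e^{-\Lambda} = a_k\, n\Lambda^{k-2}e^{-\Lambda}.
\]
The main obstacle is securing the uniform-in-$\Lambda$ constant $C_2$: one must combine a deterministic covering lemma for the uncovered region of $A_\eps(\cY)$ (obtained from compactness of $\mathcal{W}$ and continuity) with a Poisson concentration argument in the shell. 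The fine-tuning $\delta = 2/(d\Lambda)$ maximizes the product $\delta^2 e^{-\Lambda(1+\delta)^d}$, with $r-r_1 \asymp r\delta^2$ supplying the $\delta^2$ factor and the emptiness probability supplying the exponential factor; this optimum is of order $\Lambda^{-2}e^{-\Lambda}$, which is exactly what produces the $\Lambda^{-2}$ in the claimed bound.
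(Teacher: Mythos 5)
Your proposal follows the same route as the paper: Palm theory plus the Blaschke--Petkantschin change of variables from Lemma \ref{lem:Integral}, the choice $r_2=r(1+\Theta(\Lambda^{-1}))$ and $r_1 = r(1-\Theta(\Lambda^{-2}))$ with the $\delta^2$-width of the $u$-integral supplying the $\Lambda^{-2}$ and the emptiness of $B_{r_2}$ supplying the $e^{-\Lambda}$, and a deterministic covering of $A_\eps\setminus B_\rho(\cY)$ by $O_\eps(1)$ balls whose occupation is controlled by Poisson spatial independence. The only substantive difference is in the annulus-coverage step: the paper proves the stronger statement that the conditional probability $p_\eps(\by)\to 1$ uniformly via an $(\eps\rho/2)$-net of $A_\eps$ and the explicit volume bound $\vol(B_{\rho(1-\eps/2)}(s)\setminus B_{r_2})\geq C\eps^d r^d$, whereas you only claim a positive constant lower bound $C_2$; both suffice here since one only needs $Q(\by)\geq c\,e^{-\Lambda}$. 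Two minor technical points worth fixing in a write-up: (a) Lemma \ref{lem:Betas} requires the strict inequality $r_1 > r\sqrt{1-\delta^2/c_g^2}$, so take $r_1$ slightly larger than your stated $r\sqrt{1-\delta^2/c_g^2}$ (e.g.\ $r(1-\delta^2/(2c_g^2))$, which is larger and still gives $r-r_1\asymp r\delta^2$); and (b) the claim ``each of the $N$ locations is occupied with probability $\geq C_2$'' needs to be promoted to a bound on the probability that all $N$ are \emph{simultaneously} occupied (given $B_{r_2}$ empty), which the paper gets by a union bound over the net after verifying that each ball $B_{\rho(1-\eps/2)}(s)$ has Poisson mass of order $\Lambda\eps^d\to\infty$ outside $B_{r_2}$.
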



\begin{proof}
Let $\eps>0$, then the expectation of $\beta_k^{\eps}(r)$, can be computed in a similar way to the computation in the proof of Proposition \ref{prop:betti_order}. Suppose that  $r_2 < r_{\max}$ (defined in Section \ref{sec:Preliminaries}), then using  Palm theory and the properties of Poisson processes we have (Theorem \ref{thm:prelim:palm}),
\begin{equation}\label{eq:mean_C_k_th}
\begin{split}
	\meanx{\beta_k^{\eps}(r)} &= \frac{n^{k+1}}{(k+1)!}\mean{g_r^\eps(\cY', \cY'\cup\cP_n)}\\
	&= \frac{n^{k+1}}{(k+1)!}  \int_{M^{k+1}} h_r^{\eps}(\by) p_{\eps}(\by) e^{-n \Vol (B_{r_2}(c(\by))) } \abs{ \dvol_g(\by) },
\end{split}
\end{equation}
where
\[
p_{\eps}(\by) := \cprob{A_\eps(\cY') \subset B_{\rho(\cY')}(\cP_n\cup\cY')}{ \cY'=\by, \cP_n\cap B_{r_2}(c(\cY')) = \emptyset }.
\]
We shall now evaluate the integral in \eqref{eq:mean_C_k_th} in two steps.

\noindent\textbf{Step 1:} We show that $p_\eps \to 1$ uniformly in $\by$ as $n\to \infty$. Denoting $\P_{_\emptyset}(\cdot) := \cprob{\cdot}{\cP_n\cap B_{r_2}(c(\by)) = \emptyset}$,   we have that
\[ 
p_{\eps}(\by) \ge\P_{\emptyset}\param{A_\eps(\by) \subset B_{\rho(\by)}(\cP_n)}.
\]
In the following we will use the shorthand notation:
\[
	\rho = \rho(\by),\quad A_\eps = A_\eps(\by), \quad B_{r_2} = B_{r_2}(c(\by)),\quad p_\eps = p_\eps(\by).
\]
Then, using equation \eqref{eq:sphere_vol} we have 
$$\Vol(A_{\eps}) = d \omega_d \int_{\eps \rho}^{\rho} u^{d-1}\param{1-\frac{s(c(\by))}{6d}u^2 + O(u^3)}du\leq \omega_d \rho^d (1- \eps^d) (1+s_{\max}\rho^2), $$
where $s_{\max}= \sup_M (-\frac{s(c(\by))}{6d}) +\delta$, for some $\delta>0$  as in the proof of Proposition \ref{prop:betti_order}. 
Let $\cS$ be a $(\eps\rho/2)$-net of $A_\eps$, i.e. for every $x\in A_{\eps}$ there exists $s\in \cS$ with $\dist(x,s) \le \eps\rho/2$.
Since $M$ is a  compact manifold, we can find a positive  constant $c$ such that $c^{-1}u^d \leq \Vol(B_{u}(p)) \leq cu^d$ for all $p \in M$ and $u \leq r_{\max}$. Therefore,
\eqb \label{eq:points_in_cover}	
\abs{\cS} \leq C \frac{ \vol(A_{\eps}) }{ \inf_{p\in M}{ \Vol(B_{\frac{\eps\rho}{2}}(p) )}} \leq C \frac{\rho^d(1-\eps^d)}{(\eps\rho/2)^d} = C \frac{1- \eps^d}{\eps^d},
\eqe
where the constant $C$ changes in each inequality (see Remark \ref{rem:const}), and depends only on the metric $g$. In other words,  the bound for the number of points in this net only depends on $\eps$ and the metric $g$. If for all $s\in \cS$, we have $\cP_n\cap B_{\rho(1- \eps/2)}(s)\neq \emptyset$, then the triangle inequality implies that $A_{\eps} \subset B_\rho(\cP_n)$.
Notice that since we are conditioning on the event $\set{\cP_n\cap B_{r_2} = \emptyset}$, and using spatial independence  property of the Poisson process, for every $s\in\cS$ we have
\[
 \P_\emptyset\param{\cP_n\cap B_{\rho(1-\eps/2)}(s)= \emptyset  } = e^{-n \vol(B_{\rho(1-\eps/2)}(s)\backslash B_{r_2} )},
\]
and therefore,
\begin{equation}\label{eq:lower_bound_pphi}
\splitb
p_{\eps} & \geq  \P_{\emptyset}\param{ \forall s\in \cS : \cP_n\cap B_{\rho(1-\eps/2)}(s)\ne \emptyset  } \\ 
& =  1- \P_\emptyset\param{ \exists s\in \cS : \cP_n\cap B_{\rho(1-\eps/2)}(s)= \emptyset  } \\ 
& \ge  1- \sum_{s \in \cS} e^{-n \vol(B_{\rho(1-\eps/2)}(s)\backslash B_{r_2})} \\ 
& \geq  1- C \max_{s\in \cS}e^{-n \vol(B_{\rho(1-\eps/2)}(s)\backslash B_{r_2} )},
\splite
\end{equation}
where  for the last inequality we used \eqref{eq:points_in_cover}, and the fact that $\eps$ is fixed.
In order to estimate $\vol(B_{\rho(1-\eps/2)}(s) \backslash B_{r_2})$ we look at the radial arc-length parametrized geodesic $\gamma$, from $c(\by)$ to $s$ (see Figure \ref{fig:geodesic}). This geodesic first enters $B_{\rho(1-\eps/2)}(s) \backslash B_{r_2}$ at a point $p_{\text{in}}= \gamma(r_2)$ which is at distance $r_2$ from $c(\by)$ and leaves it through $p_{\text {out}}= \gamma(\rho(1-\eps/2)+ \dist(s, c(\by)))$. Note that since $s\in A_{\eps}$, we have that $\dist(s,c(\by))\ge \eps\rho$. Therefore,
$$\dist(p_{\text{in}},p_{\text{out}}) = \rho(1-\eps/2)+ \dist(s, c(\by))-r_2 \geq \rho(1+\eps/2) - r_2.$$


\begin{figure}[h]
\centering
\includegraphics[scale=0.3]{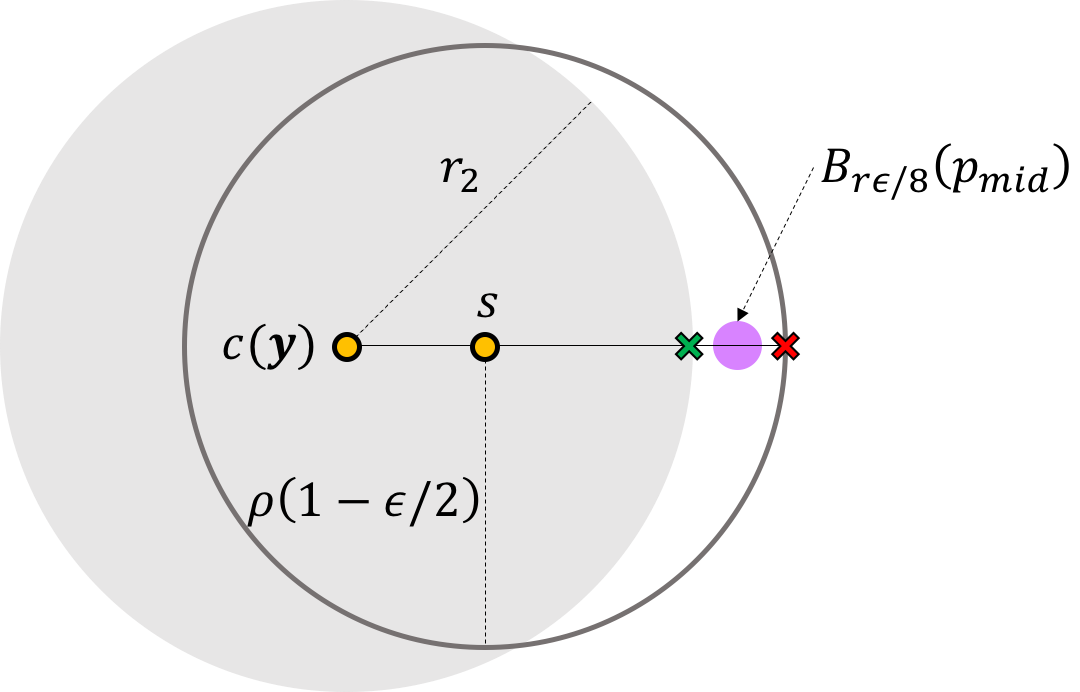}
\caption{\label{fig:geodesic}  Bounding the volume of $R = B_{\rho(1-\eps/2)} \backslash B_{r_2}$.The solid line going through $c(\by)$ and $s$ represents the geodesic. This geodesic enters the region $R$ through the green marker ($p_{\text{in}}$) and leaves through the red marker ($p_{\text{out}}$). The small shaded disc is a ball of radius $r\eps/8$ which we show to be contained inside $R$, and we use its volume as a lower bound for $\vol(R)$. }
\end{figure}


Now, take 
\begin{equation}\label{eq:r1r2}
r_2= r(1+  \xi),\quad \text{ and } \quad r_1=r (1-\xi^2/2c_g^2) 
\end{equation}
 with $\xi =o(\eps)$ ($\xi$ is to be determined later). Then, these satisfy the conditions in the statement of Lemma \ref{lem:Betas} and
$$\dist(p_{\text{in}}, p_{\text{out}}) \geq r_1(1+\eps/2) - r_2 >{ \frac{r \eps}{4},}$$ 
for $\xi =o( \epsilon)$. As a consequence, the ball of radius $\frac{r \eps}{8}$ centered at the midpoint $p_{\text{mid}}$ from $p_{\text{in}}$ to $p_{\text{out}}$ is  completely contained in $B_{\rho(1-\eps/2)}(s) \backslash B_{r_2}$, i.e.
$$B_{r \eps/8}(p_{\text{mid}}) \subset B_{\rho(1-\eps/2)}(s) \backslash B_{r_2}.$$
Hence $\vol(B_{\rho(1-\eps/2)}(s) \backslash B_{r_2}) \geq C \eps^d r^d$, for some constant $C>0$, which  depends on the metric, but can be taken to be independent of $\by$ and $s$. Putting this back into \eqref{eq:lower_bound_pphi}, we have
$$1-Ce^{-Cn\eps^dr^d} \leq p_{\eps}(\by) \leq 1,\quad \forall \by \in M^{k+1}.$$
Since $\Lambda  \to \infty$ we conclude that  $p_\eps(\by)$ converges uniformly to $1$, as $\eps> \eps_0>0$, for some $\eps_0$, i.e. stays bounded away from zero.


\noindent\textbf{Step 2:} We estimate the integral in  \eqref{eq:mean_C_k_th} and show that $\mean{\beta_k^\eps(r)} = \Omega( n \Lambda^{k-2} e^{- \Lambda })$.

Combining the result of step $1$ with \eqref{eq:mean_C_k_th}, we have that for sufficiently large $n$
\[
\meanx{\beta_k^\eps(r)} \geq \frac{1}{2} \frac{n^{k+1}}{(k+1)!} \int_{M^{k+1}} h_r^{\eps}(\by)  e^{-n \Vol (B_{r_2}(c(\by))) } \abs{ \dvol_g(\by) },
\]
Next, we use normal coordinates to estimate the volume of $B_{\rho(\by)}(c(\by))$ for $\rho(\by) < r$, as in Corollary \ref{cor:Volume}. This use is very similar to that used in equation \eqref{eq:volume_estimate_normal_coords}, in the proof of Lemma \ref{lem:crit_pts}, yielding
$$e^{-n\vol(B_{r_2}(c(\by)))} \geq e^{-\Lambda_{r_2} }(1+s_{\min}\Lambda_{r_2} r^2),$$
where $\Lambda_{r_2}=\omega_d nr_2^d$ and $s_{\min}= \inf_{c\in M} \frac{s(c)}{6(d+2)} +\delta$, for some $\delta>0$. We proceed as in the proof of Proposition \ref{prop:betti_order} and then
\[
\meanx{\beta_k^\eps(r)} \geq   D^\eps_k (1+s_{\min}\Lambda r^2)(1-c_{_R} r^2) n \Lambda^k e^{-\Lambda_{r_2}} \int_{\frac{r_1}{r}}^{1}  s^{dk-1} ds,
\]
where $c_{_R}=\sup_{V \in Gr(k,TM)} \abs{-\frac{Ric^{V}}{3}}+ \nu$ and
\begin{eqnarray}\nonumber
 D_k^\eps & := & \frac{1}{2\omega_d^k(k+1)!} \int_M \abs{ \dvol_g(c) } \int_{Gr(k, T_cM)}  {\dmu_{k,d} }(V) \times \\ \nonumber
 & &  \times \abs{  \prod_{i=1}^{k+1} \int_{\mathbb{S}_1(V)}   \inf_{s \in (r_1, r)}\sqrt{\abs{ \det(g(\exp_c(u w_i))) }} \  \dvol_{\mathbb{S}_1(V)}(w_i) }  \Upsilon_1^{d-k}(\textbf{w}) h^\eps(\exp_c( u \textbf{w}))
\end{eqnarray}
with $h^\eps(\by) := h(\by)\ind\set{\phi(\by) \ge \eps}$. First we notice that from \eqref{eq:RiemannianMeasure} we have that if $r$ and $r_1$ are small enough, then each of the  terms $\det(g(\exp_c(u w_i)))$ is as close to $1$ as we want. Secondly, we recall that
\[
	\phi(\exp_c(u \textbf{w})) = \frac{1}{2} \min_{w \in \partial\Delta(\exp_c( \textbf{w} ))} \abs{ w },
\]
which is a nonnegative continuous function of $\textbf{w}$ vanishing along a measure zero set consisting of those $\textbf{w}$ for which $0 \in T_cM$ is contained in a face of their convex hull. This is a measure zero set and  for any sufficiently small $\eps>0$, the support of $\ind \set{\phi(\exp_c(u \textbf{w})) \ge \eps}$  has a nonzero measure. Also, notice that if $\phi(\exp_c(u \textbf{w})) \ge \eps$  then $\Upsilon_1(\textbf{w})$ (the $k$-volume of the simplex $\Delta(\textbf{w})$) is  bounded from below by a quantity of the order of $\epsilon^k$. Putting these two facts together we conclude that for all sufficiently small $\epsilon>0$, we do have $ D_k^\eps > 0$. Moreover, since we set $r_1=r(1-\xi^2/2c_g^2)$ we have $\int_{\frac{r_1}{r}}^{1}s^{dk-1} ds = \xi^2/2c_g^2+O((\xi^2/2c_g^2)^2 ) \ge \xi^2/3c_g^2$, and thus
\begin{equation}\label{eq:mean_theta_bound}
	\mean{\beta_k^\eps(r)} \geq  \frac{D_k^\eps}{3c_g^2} (1+s_{\min}\Lambda r^2)(1-c_{_R} r^2)^{k+1}  n \Lambda^{k} \xi^2  e^{- \Lambda_{r_2}} .
\end{equation}
Note that in the exponent we have $\Lambda_{r_2}=\Lambda (1+\xi)^d$ (since we set $r_2 = r(1+\xi)$), and therefore $e^{-\Lambda_{r_2}}=e^{-\Lambda (1+\xi)^d}=e^{-\Lambda} e^{\Lambda (-d\xi + o(\xi))}$. However, as $\Lambda \to + \infty$ this second exponential, $e^{\Lambda (-d\xi + o(\xi))}$, is bounded from below if and only if $\xi$ decays at least as fast as $O(\Lambda^{-1})$. Therefore, we take $\xi=\Lambda^{-1}$, in which case $e^{-\Lambda_{r_2}} \geq \frac{1}{2}e^{-d} e^{-\Lambda}$, and putting it back into \eqref{eq:mean_theta_bound} we have
$$\mean{\beta_k^\eps(r)} \geq  \frac{D_k^\eps}{6c_g^2} e^{-d}(1+s_{\min}\Lambda r^2)(1-c_{_R} r^2)^{k+1}  n \Lambda^{k-2}   e^{- \Lambda}.$$
Now if we take $\Lambda \to + \infty$, but $\Lambda r^2 \to 0$, then the statement follows.\\
\end{proof}


\begin{rem}\label{rem:Scalar_Curvature_phi}
A more refined conclusion of the statement above would be that given a sufficiently small $\eps>0$ and $r>0$, then for $r_1=r(1-\frac{\Lambda^{-2}}{2c_g^2})$ and $r_2=r(1+\Lambda^{-1})$ the following holds. 
\begin{enumerate}
\item If $\Lambda \rightarrow + \infty$ and $\Lambda r^2 \to 0$, then $\mean{\beta_k^\eps(r)} \geq C(k,d,\eps,g) n \Lambda^{k-2} e^{-\Lambda}$, for some $C(k,d,\eps,g)>0$ depending on $k,d,\eps$ and the metric $g$.
\item If $(M,g)$ has everywhere positive scalar curvature, then as $\Lambda \to + \infty$ one does not need to assume that $\Lambda r^2 \to 0$. Instead it is enough to assume this stays bounded, in which case
$$\mean{\beta_k^\eps(r)} \geq C n \Lambda^{k-2}(1+ s_{\max}\Lambda r^2) e^{-\Lambda},$$
for some $C>0$ as in the previous bullet.
\end{enumerate}
\end{rem}


Putting all the previous lemmas together, we can now prove the main result of this section.


\begin{proof}[Proof of Proposition \ref{prop:betti_order2}]

It follows from Lemma \ref{lem:Betas} that $\mean{\beta_k(r)} \geq \mean{\beta_k^{\eps}(r)}$, and using  Lemma \ref{lem:theta_cycle_order}, we have
$$ \mean{\beta_k(r)} \geq a_{k} n \Lambda^{k-2} e^{-\Lambda}, $$
for some $a_{k}$ depending only on $k$, $g$ and $d$.\\
\end{proof}


\section{Second Moment Calculations}\label{sec:second_moment}


The following proposition will be used to provide the lower threshold in Theorem \ref{thm:main}.
It uses a second moment argument, based on  Chebyshev's inequality $\prob{\abs{X-\mean{X}}\ge a} \le \frac{\var{X}}{a^2}$.


\begin{prop}\label{prop:second_moment}
For all sufficiently small $\epsilon>0$, and $1 \leq k \leq d-1$ the following holds.
Fix $\delta \in (0,1)$. If $\Lambda = \log n + (k-2)\log\log n -w(n)$, with $w(n)\to\infty$, then
$$\lim_{n \rightarrow + \infty} \prob{ \beta_k^{\epsilon} (r) >\delta\ \mean{\beta_k^{\eps}(r)} } =1 . $$
\end{prop}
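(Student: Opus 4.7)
The plan is to apply Chebyshev's inequality and reduce the problem to a variance estimate. First I will verify that $M_n := \mean{\beta_k^\eps(r)}$ tends to infinity: under the hypothesis $\Lambda = \log n + (k-2)\log\log n - w(n)$, a direct calculation shows $n\Lambda^{k-2}e^{-\Lambda} = (1+o(1))e^{w(n)}$, and Lemma \ref{lem:theta_cycle_order} then gives $M_n \to \infty$. Chebyshev reduces the claim to showing $\var{\beta_k^\eps(r)} = o(M_n^2)$, since
\[
\prob{\beta_k^\eps(r) \le \delta M_n} \le \prob{\abs{\beta_k^\eps(r) - M_n} \ge (1-\delta)M_n} \le \frac{\var{\beta_k^\eps(r)}}{(1-\delta)^2 M_n^2}.
\]

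I will expand $\beta_k^\eps(r)^2$ as an ordered double sum over $(k+1)$-subsets $\cY_1, \cY_2 \subset \cP_n$ and partition by $j = \abs{\cY_1 \cap \cY_2} \in \set{0,1,\ldots,k+1}$. The multivariate Palm formula for the Poisson process yields $\mean{\beta_k^\eps(r)^2} = \sum_{j=0}^{k+1} I_j$, where each $I_j$ is a $(2(k+1)-j)$-fold integral of the conditional expectation $\mean{g_r^\eps(\cY_1',\cdot)\,g_r^\eps(\cY_2',\cdot) \mid \cY_i' \subset \cP_n}$. Bounding the conditional expectation by dropping the cover indicators gives the upper bound $h_r^\eps(\cY_1')h_r^\eps(\cY_2')\, e^{-n\vol(B_{r_2}(c(\cY_1'))\cup B_{r_2}(c(\cY_2')))}$. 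The diagonal term $I_{k+1}$ equals $M_n$ and is therefore $o(M_n^2)$.

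For $j=0$ I split $I_0 = I_0^{\mathrm{far}} + I_0^{\mathrm{near}}$ depending on whether $\dist(c(\cY_1'),c(\cY_2'))$ exceeds $L_n := 2r_2 + 2r$. On the far event the exclusion balls and the cover-test regions of $\cY_1'$ and $\cY_2'$ are pairwise disjoint, so Poisson spatial independence factorizes the joint expectation and $I_0^{\mathrm{far}} = M_n^2(1+o(1))$: the missing ``near-product'' contribution has mass $O(r^d/\vol(M)) \cdot M_n^2 = O(\Lambda/n)\cdot M_n^2 = o(M_n^2)$. On the near event, I apply Lemma \ref{lem:Integral} to each $\cY_i'$ around its center and parametrize the relative position by $t = \dist(c_1, c_2) \in (0, L_n)$. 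The crucial step is the lens estimate $\vol(B_{r_2}(c_1)\cup B_{r_2}(c_2)) - \omega_d r_2^d \gtrsim \omega_d r_2^{d-1} t$ for small $t$, a Euclidean computation transferred to $(M,g)$ via Lemma \ref{lem:RiemannianBallsComparison}. This yields an extra exponential suppression $e^{-c_d \Lambda t/r_2}$ which, integrated against $t^{d-1}\,dt$, contributes only $O(r_2^d \Lambda^{-d} e^{-\Lambda})$. Combined with the two radial factors $\int h_r^\eps(\cY_i')\,d\cY_i' \sim r^{dk}\Lambda^{-2}$ from the proof of Lemma \ref{lem:theta_cycle_order}, this gives $I_0^{\mathrm{near}} = O(n\Lambda^{2k-d-3}e^{-\Lambda})$, whence $I_0^{\mathrm{near}}/M_n^2 = O((\log n)^{k-d-1} e^{-w(n)})$ vanishes because $k \le d-1$ forces $k-d-1 \le -2$.

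The cases $1 \le j \le k$ are analogous: sharing $j$ points constrains $c(\cY_2')$ to lie on the $(d-j+1)$-dimensional equidistant submanifold through them, reducing the integration volume, and the same lens mechanism yields $I_j = o(M_n^2)$. Summing, $\var{\beta_k^\eps(r)} = (I_0^{\mathrm{far}} - M_n^2) + I_0^{\mathrm{near}} + \sum_{j=1}^{k+1} I_j = o(M_n^2)$, and Chebyshev concludes. The main obstacle will be the quantitative Riemannian lens estimate, because without the $t$-linear gain in $\vol(B_{r_2}(c_1)\cup B_{r_2}(c_2)) - \omega_d r_2^d$ the naive bound $e^{-\Lambda}$ for the exclusion factor (uniform in $t$) would give $I_0^{\mathrm{near}} \sim n\Lambda^{2k-3}e^{-\Lambda}$, which fails to beat $M_n^2$ when $k \ge 2$ and $w(n)$ grows slowly; transferring the Euclidean lens volume computation uniformly over $(M,g)$ requires Lemmas \ref{lem:RiemannianMeasureComparison} and \ref{lem:RiemannianBallsComparison}.
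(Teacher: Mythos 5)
Your proof is essentially correct and follows the same overall strategy as the paper: Chebyshev's inequality, expansion of $\mean{(\beta_k^\eps(r))^2}$ via Palm theory, a far/near split on $\dist(c_1,c_2)$, and a lens-volume estimate for the near contributions partitioned by $j = |\cY_1\cap\cY_2|$. Two small presentational differences from the paper are worth noting. For the far $j=0$ part, the paper shows $\mean{T_1} - M_n^2 \le 0$ \emph{exactly}, by introducing an independent copy $\cP_n'$ of the Poisson process and observing that when $B_{2r}(c_1)\cap B_{2r}(c_2)=\emptyset$ spatial independence makes the coupled and uncoupled conditional expectations agree, so $\mean{\Delta g_r^\eps}=0$ — this is cleaner than, but ultimately equivalent to, your $O(\Lambda/n)\cdot M_n^2$ volume-mass estimate for the missing near slab. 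For the near $I_0$ estimate, the paper does not integrate the exponential suppression $e^{-c_d\Lambda t/r_2}$ directly against $t^{d-1}\,dt$ as you do; instead it introduces a cutoff $\alpha r$, uses the crude bound $e^{-\Lambda}$ for $t \le \alpha r$ and $e^{-(1+\omega_d\alpha/\omega_{d-1})\Lambda}$ for $t>\alpha r$, and then optimizes $\alpha \sim \log\log n/\log n$ at the end — your direct integration gives a slightly sharper $\Lambda^{-d}$ in place of the paper's $(\alpha^d + e^{-c\alpha\Lambda})$, but both comfortably beat $M_n^2$. Both routes are valid; yours trades the paper's exact far-part cancellation for a volume estimate, and its $\alpha$-thresholding for a direct incomplete-gamma integral.
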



\begin{proof}(Proof of proposition \ref{prop:second_moment})
Since $\beta_k^\eps(r) \ge 0$, then using Chebyshev's inequality we have
\[
\prob{\beta_k^\eps(r) \le \delta\ \mean{\beta_k^{\eps}(r)}} \le \frac{\Var(\beta_k^{\epsilon} (r))}{(1-\delta)^2\meanx{\beta_k^{\epsilon}(r)}^2},
\]
where  $\Var(\beta_k^{\epsilon} (r))= \meanx{ (\beta_k^{\epsilon}(r))^2 }- \meanx{\beta_k^{\epsilon}(r)}^2$. 
Thus, showing that the right hand side converges to zero will prove the statement.
Recall from equation \eqref{eq:Beta_k_Epsilon} that $\beta_k^\eps(r) = \sum_{\cY\subset \cP_n} g_r^\eps(\cY,\cP_n)$ and so  we can write 
\begin{eqnarray}
(\beta_k^{\epsilon}(r))^2 & = &  \sum_{\cY_1 ,\cY_2 \subset \cP_n } g_r^\eps(\cY_1,\cP_n) g_r^\eps(\cY_2,\cP_n) ,
\end{eqnarray}
where $\cY_i$ ($i=1,2$) run through all subsets of $\cP_n$ with $(k+1)$-points. Next, 
defining
\[
\Phi_r(\cY_1,\cY_2) := \ind\set{B_r(c(\cY_1))\cap B_r(c(\cY_2)) = \emptyset}.
\]
we can write
\begin{eqnarray}\nonumber
\meanx{ (\beta_k^{\epsilon} (r))^2} & = &    \meanx{ \sum_{\cY_1, \cY_2 \subset \cP_n}    g_r^\eps(\cY_1,\cP_n) g_r^\eps(\cY_2,\cP_n) } \\ \nonumber
& = &   \meanx{  \sum_{\cY_1, \cY_2 \subset \cP_n}   g_r^\eps(\cY_1,\cP_n) g_r^\eps(\cY_2,\cP_n)  \Phi_{2r}(\cY_1,\cY_2) }\\ \nonumber
& +&    \meanx{ \sum_{\cY_1, \cY_2 \subset \cP_n}   g_r^\eps(\cY_1,\cP_n) g_r^\eps(\cY_2,\cP_n) (1-\Phi_{2r}(\cY_1,\cY_2)) } \\ \nonumber
& = & \meanx{T_1} +  \meanx{T_2},
\end{eqnarray}
where  $T_1$ and $T_2$ are the first and second sums appearing in the expectation above.  Thus, we have
\begin{eqnarray}\nonumber
\Var(\beta_k^{\epsilon}(r)) & = & \meanx{(\beta_k^{\epsilon}(r))^2}- \meanx{\beta_k^{\epsilon}(r)}^2 \\ \nonumber
& = & ( \meanx{T_1}-\meanx{\beta_k^{\epsilon}(r)}^2 ) +   \meanx{T_2},
\end{eqnarray}
and our next step is to bound the terms $( \meanx{T_1}-\meanx{\beta_k^{\epsilon}}^2 )$ and $\meanx{T_2}$ separately.
Using Palm theory (Theorem \ref{thm:prelim:palm}) we can write
\begin{equation}\label{eq:second_moment_1}
\begin{split}
\meanx{\beta_k^{\epsilon}(r)}^2 & =  \frac{n^{2k+2}}{((k+1)!)^2} \meanx{g_r^{\epsilon}(\cY'_1 , \cY'_1 \cup \cP_n)g_r^{\epsilon}(\cY'_2 , \cY'_2 \cup \cP'_n)} \\ 
\meanx{T_1 } & =  \frac{n^{2k+2}}{((k+1)!)^2} \meanx{g_r^{\epsilon}(\cY'_1 , \cY' \cup \cP_n)g_r^{\epsilon}(\cY'_2 , \cY' \cup \cP_n)  \Phi_{2r}(\cY_1,\cY_2)   },
\end{split}
\end{equation}
where $\cY'_1, \cY'_2$ are independent sets of $k+1$ points uniformly distributed in $M$, $\cY'= \cY'_1 \cup \cY'_2$, and $\cP_n'$ an independent copy of $\cP_n$. Thus, using \eqref{eq:second_moment_1} we have 
{
 \begin{eqnarray}\nonumber
 \meanx{T_1}-\meanx{\beta_k^{\epsilon}(r)}^2  & =  &  \frac{n^{2k+2}}{((k+1)!)^2}  \Big(  \meanx{g_r^{\epsilon}(\cY'_1 , \cY' \cup \cP_n)g_r^{\epsilon}(\cY'_2 , \cY' \cup \cP_n) \Phi_{2r}(\cY'_1,\cY'_2) } \\ \nonumber
& & \ \ \ \ - \meanx{  g_r^{\epsilon}(\cY'_1 , \cY'_1 \cup \cP_n)g_r^{\epsilon}(\cY'_2 , \cY'_2 \cup \cP'_n)  \Phi_{2r}(\cY'_1,\cY'_2)}    \\ \nonumber
& & \ \ \ \ - \meanx{   g_r^{\epsilon}(\cY'_1 , \cY'_1 \cup \cP_n)g_r^{\epsilon}(\cY'_2 , \cY'_2 \cup \cP'_n) (1- \Phi_{2r}(\cY'_1,\cY'_2))   }   \Big) \\ \nonumber
& \leq &   \frac{n^{2k+2}}{((k+1)!)^2}  \Big(  \meanx{g_r^\eps(\cY'_1 , \cY'_1 \cup \cP_n)g_r^\eps(\cY'_2 , \cY'_2 \cup \cP_n)  \Phi_{2r}(\cY'_1,\cY'_2)}    \\ \nonumber
& & \ \ \ \ - \meanx{  g_r^\eps(\cY'_1 , \cY'_1 \cup \cP_n)g_r^\eps(\cY'_2 , \cY'_2 \cup \cP'_n)  \Phi_{2r}(\cY'_1,\cY'_2)  }  \Big). \\ \nonumber
& = & \frac{n^{2k+2}}{((k+1)!)^2} \meanx{\Delta g_r^\eps}
\end{eqnarray}}
where
$$\Delta g^\eps_r:= \param{g_r^\eps(\cY'_1 , \cY'_1 \cup \cP_n)g_r^\eps(\cY'_2 , \cY'_2 \cup \cP_n) - g_r^\eps(\cY'_1 , \cY'_1 \cup \cP_n)g_r^\eps(\cY'_2 , \cY'_2 \cup \cP'_n)}\Phi_{2r}(\cY'_1,\cY'_2).$$
Similarly to \cite{bobrowski_vanishing_2015}, we can show that $\mean{\Delta g_r^\eps} = 0$ as follows.
Consider the conditional distribution of $\Delta g_r^\eps$ given $\cY'_1,\cY'_2$, and denote $\E_{\cY'_1,\cY'_2}\set{\cdot} = \cmean{\cdot}{\cY'_1,\cY'_2}$. If $\Delta g_r^\eps \ne 0$ then necessarily $B_{2r}(c(\cY'_1))\cap B_{2r}(c(\cY'_2))=\emptyset$. Using the spatial independence property of the Poisson process, together with the fact that the value of $g_r^\eps(\cY'_i, \cY'_i\cup \cP_n)$ only depends on the points of $\cP_n$ lying inside $B_{2r}(c(\cY'_i))$,
we conclude that
\[
\splitb
\E_{\cY'_1,\cY'_2}\set{g_r^\eps(\cY'_1,\cY'_1\cup\cP_n)g_r^\eps(\cY'_2,\cY'_2\cup\cP_n)}   &= \E_{\cY'_1,\cY'_2}\set{g_r^\eps(\cY'_1,\cY'_1\cup\cP_n)}\E_{\cY'_1,\cY'_2}\set{g_r^\eps(\cY'_2,\cY'_2\cup \cP_n)} \\
&= \E_{\cY'_1,\cY'_2}\set{g_r^\eps(\cY'_1,\cY'_1\cup\cP_n)}\E_{\cY'_1,\cY'_2}\set{g_r^\eps(\cY'_2,\cY'_2\cup \cP'_n)} \\
\splite
\]
since $\cP_n$ and $\cP_n'$ are independent and have the same distribution. Thus, we have that $\E_{\cY_1,\cY_2}\set{\Delta g_r} = 0$.
Consequently, $\mean{\Delta g_r} = \mean{\E_{\cY_1,\cY_2}\set{\Delta g_r} }=0$.

Next, we wish to bound  $\meanx{T_2}$. We start by writing,
\begin{eqnarray}\nonumber
T_2 & = &   \sum_{\cY_1, \cY_2 \subset \cP_n}   g_r^\eps(\cY_1,\cP_n) g_r^\eps(\cY_2,\cP_n) (1-\Phi_{2r}(\cY_1,\cY_2))\\ \label{eq:I_j_First_Appearence}
& = & \sum_{j=0}^{k+1} \sum_{ \abs{ \cY_1 \cap \cY_2 } =j} g_r^\eps(\cY_1,\cP_n) g_r^\eps(\cY_2,\cP_n)  (1-\Phi_{2r}(\cY_1,\cY_2)).
\end{eqnarray}
In the following, it will be convenient to refer to the inner sum as $I_j$. Lemmas \ref{lem:I_0_Computations} and \ref{lem:I_j_Computations} respectively provide upper bounds on $\mean{I_0}$ and  $\mean{I_j}$ for $j \geq 1$. The largest of these is the upper bound for $\mean{I_0}$ which yields
$$\meanx{T_2} \leq C n \Lambda^{2k+1} \Lambda^{-4} e^{-\Lambda} \left( e^{- \omega_d \alpha \Lambda / \omega_{d-1} } + \alpha^d \right), $$ for any $\alpha < 1$.
Using the fact that $\meanx{\beta_k^{\epsilon}(r)}\ge a_k n \Lambda^{k-2} e^{- \Lambda}$ and $\Lambda r \to 0$, we get
\begin{eqnarray}\nonumber
\frac{\meanx{T_2}}{\meanx{\beta_k^{\epsilon}(r)}^2} & \leq & C \frac{ n \Lambda^{2k+1} \Lambda^{-4} e^{-\Lambda}  }{ n^2 \Lambda^{2k-4} e^{- 2\Lambda} } ( e^{- \omega_d \alpha \Lambda / \omega_{d-1} } + \alpha^d ) \leq \frac{\Lambda e^{\Lambda}}{n} (  e^{- \omega_d \alpha \Lambda / \omega_{d-1}}+\alpha^d  ).
\end{eqnarray}
Taking $\Lambda=\log n + (k-2) \log \log n - w(n)$ and $\alpha = \frac{k \omega_{d-1}}{\omega_d} \frac{\log \log n}{\log n}$ we have
\begin{eqnarray}\nonumber
\frac{\meanx{T_2}}{\meanx{\beta_k^{\epsilon}(r)}^2} & \leq & C e^{-w(n)} (\log n)^{k-1} \left( e^{- \frac{\omega_d}{ \omega_{d-1}} \alpha \log n} + \alpha^d \right)  \\ \nonumber
& \leq & C e^{-w(n)} (\log n)^{k-1} \left( \frac{1}{(\log n)^k} + \frac{(\log \log n)^d}{(\log n)^d} \right)  \\ \nonumber
& \leq & C e^{-w(n)}  \left( \frac{1}{\log n} + \frac{(\log \log n)^d}{(\log n)^{d+1-k}} \right) \to 0, \ \text{as} \ n \to + \infty .
\end{eqnarray}
\end{proof}


To conclude this section, we are left with bounding the $\mean{I_j}$ terms used in the proof of the previous proposition. Recall that these are defined by
$$I_j=\sum_{ \abs{ \cY_1 \cap \cY_2 } =j} g_r^\eps(\cY_1,\cP_n) g_r^\eps(\cY_2,\cP_n)  (1-\Phi_{2r}(\cY_1,\cY_2)),$$
and first appeared in \eqref{eq:I_j_First_Appearence}. The following two statements provide the needed bounds.


\begin{lem}[Estimate on $I_0$]\label{lem:I_0_Computations} For any $0 < \alpha < 1$ we have
$$\meanx{I_0} \leq C n \Lambda^{2k+1} \Lambda^{-4} e^{-\Lambda} \left( e^{- \omega_d \alpha \Lambda / \omega_{d-1} } + \alpha^d \right)  . $$
\end{lem}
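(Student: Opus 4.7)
The plan is to apply Palm theory together with the spatial independence of the Poisson process to reduce $\meanx{I_0}$ to a deterministic double integral over two $(k+1)$-tuples of points, and then to estimate that integral by splitting according to the distance between the two centers $c_1 := c(\cY_1)$ and $c_2 := c(\cY_2)$. First I would invoke Theorem \ref{thm:prelim:palm} to write
\[
\meanx{I_0} = \frac{n^{2k+2}}{((k+1)!)^2}\meanx{g_r^\eps(\cY_1',\cY'\cup\cP_n)\,g_r^\eps(\cY_2',\cY'\cup\cP_n)\bigl(1-\Phi_{2r}(\cY_1',\cY_2')\bigr)},
\]
where $\cY_1',\cY_2'$ are independent uniform $(k+1)$-tuples in $M$ and $\cY' = \cY_1'\cup\cY_2'$. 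Dropping the cross-conditions $B_{r_2}(c_1)\cap\cY_2' = \emptyset$ and $B_{r_2}(c_2)\cap\cY_1' = \emptyset$ gives an upper bound, and spatial independence of $\cP_n$ then produces the factor $e^{-n\vol(B_{r_2}(c_1)\cup B_{r_2}(c_2))}$, reducing matters to a deterministic integral over $(\by_1,\by_2) \in M^{k+1}\times M^{k+1}$ with integrand $h_r^\eps(\by_1)h_r^\eps(\by_2)\,e^{-n\vol(B_{r_2}(c_1)\cup B_{r_2}(c_2))}\bigl(1-\Phi_{2r}(\by_1,\by_2)\bigr)$.

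Next I would apply Lemma \ref{lem:Integral} separately to each $\by_i$ integration, repeating the estimates from the proof of Lemma \ref{lem:theta_cycle_order}. With the choices $r_1 = r(1-\Lambda^{-2}/(2c_g^2))$ and $r_2 = r(1+\Lambda^{-1})$, each inner integration over $(u_i,V_i,\bw_i)$ yields a factor of order $r^{dk}\Lambda^{-2}$. Writing $e^{-n\vol(B_{r_2}(c_1)\cup B_{r_2}(c_2))} = e^{-n\vol(B_{r_2}(c_1))}\,e^{-n\Delta V(c_1,c_2)}$ with $\Delta V(c_1,c_2) := \vol(B_{r_2}(c_2)\setminus B_{r_2}(c_1))$, and using $\Lambda_{r_2} = \Lambda(1+\Lambda^{-1})^d \leq \Lambda + C$ to pull out an overall $e^{-\Lambda}$, the task reduces to bounding
\[
\cI := \int_M \abs{\dvol_g(c_1)}\int_{\dist(c_1,c_2)\leq 4r} e^{-n\Delta V(c_1,c_2)}\,\abs{\dvol_g(c_2)},
\]
the distance constraint coming from $1-\Phi_{2r} = \ind\{B_{2r}(c_1)\cap B_{2r}(c_2)\neq\emptyset\}$.

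To estimate $\cI$, I would split the inner integration at $\dist(c_1,c_2) = \alpha r$. On $\{\dist(c_1,c_2)\leq \alpha r\}$ I drop the exponential and bound the $c_2$-volume by $\vol(B_{\alpha r}(c_1))\leq C\alpha^d r^d$ via Corollary \ref{cor:Volume}. On $\{\alpha r < \dist(c_1,c_2)\leq 4r\}$ I lower bound $\Delta V$ via the Euclidean lens-volume identity
\[
\Delta V^E(t) = 2\omega_{d-1}\int_0^{t/2}(r_2^2-s^2)^{(d-1)/2}\,ds,
\]
transferred to $M$ via Lemmas \ref{lem:RiemannianBallsComparison} and \ref{lem:RiemannianMeasureComparison}, which gives $\Delta V(c_1,c_2)\gtrsim \omega_{d-1}r^{d-1}\dist(c_1,c_2)$ uniformly for $\dist(c_1,c_2)\leq 4r$. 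Hence $e^{-n\Delta V(c_1,c_2)}\leq e^{-\omega_d\alpha\Lambda/\omega_{d-1}}$ after absorbing constants using $nr^d = \Lambda/\omega_d$, and the $c_2$-volume of this region is at most $Cr^d$. Combining yields $\cI \leq Cr^d(\alpha^d + e^{-\omega_d\alpha\Lambda/\omega_{d-1}})$, and assembling all the factors with $nr^d = \Lambda/\omega_d$ gives $\meanx{I_0}\leq Cn\Lambda^{2k+1}\Lambda^{-4}e^{-\Lambda}(\alpha^d + e^{-\omega_d\alpha\Lambda/\omega_{d-1}})$, as claimed.

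The main technical obstacle is the uniform lower bound on the Riemannian lens-volume $\Delta V(c_1,c_2)$ in terms of $t = \dist(c_1,c_2)$: the explicit Euclidean formula does not hold verbatim on $M$, so one must combine Lemma \ref{lem:RiemannianBallsComparison} (comparing the union of two Riemannian balls to the corresponding Euclidean union in normal coordinates centered at the midpoint of $c_1$ and $c_2$) with the density expansion \eqref{eq:RiemannianMeasure}, and verify that the multiplicative comparison is uniform in $(c_1,c_2)$ across the whole range $t\in(0,4r]$ for $r$ small enough. Once this uniform comparison is in hand, the rest of the argument is routine bookkeeping along the lines described above.
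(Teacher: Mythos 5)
Your proposal is correct and follows essentially the same approach as the paper: Palm theory to reduce $\meanx{I_0}$ to a deterministic double integral, a Lemma \ref{lem:Integral}-style change of variables applied to each $(k+1)$-tuple with $r_1 = r(1-\Lambda^{-2}/(2c_g^2))$, $r_2 = r(1+\Lambda^{-1})$ producing the $\Lambda^{-4}$ factor, and a split of the $c_2$-integration at $\dist(c_1,c_2)=\alpha r$ with the two pieces estimated exactly as you describe. The only cosmetic differences are that the paper bounds the union volume directly via the Taylor expansion \eqref{eq:vol_taylor} together with Lemmas \ref{lem:RiemannianBallsComparison} and \ref{lem:RiemannianMeasureComparison} rather than factoring out the base ball volume and a lens-volume excess, and it cites Corollary \ref{cor:prelim:palm2} (the $j=0$ case) rather than Theorem \ref{thm:prelim:palm}.
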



\begin{lem}[Estimates on $I_j$, for $j \geq 1$]\label{lem:I_j_Computations} For any $0 < \alpha < 1$, we have
$$\meanx{I_j} \leq C n \Lambda^{2k+1-j} r^{j(k-j)+1} \Lambda^{-4} e^{-\Lambda} \left( e^{- \omega_d \alpha \Lambda / \omega_{d-1} } + \alpha^{d-j+1} \right)  . $$
\end{lem}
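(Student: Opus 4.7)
The proof closely parallels that of Lemma \ref{lem:I_0_Computations}, with the added complication that the $j$ shared points couple the two $\cY_i$-integrations and require a more delicate Jacobian analysis.

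First, I would apply Palm theory (Theorem \ref{thm:prelim:palm}) with the ordered splitting $\cY_1\cup\cY_2=\cZ\cup\cX_1\cup\cX_2$ where $|\cZ|=j$ and $|\cX_i|=k+1-j$. The Poisson void probability for $B_{r_2}(c_1)\cup B_{r_2}(c_2)$, together with the bounds $p_\eps\le 1$ and $\ind\{A_\eps\subset B_\rho(\cP)\}\le 1$, yields
\[
\meanx{I_j}\le C\,n^{2k+2-j}\!\int_{M^{2k+2-j}}\! h_r^\eps(\cY_1)\,h_r^\eps(\cY_2)\,\ind\{\dist(c_1,c_2)\le 2r\}\,e^{-n\vol(U)}\,|\dvol_g|,
\]
with $U=B_{r_2}(c_1)\cup B_{r_2}(c_2)$.

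Next, I would change variables. Apply Lemma \ref{lem:Integral} to $\cY_1$, introducing $(c_1,u_1,V_1,\bw_1)$; the shared points become $z_i=\exp_{c_1}(u_1 w_{1,i})$ for $i<j$. For the $\cX_2$-integration with $\cZ$ held fixed, apply a constrained version of Lemma \ref{lem:Integral} to $\cY_2$: parametrize by $(c_2,u_2,V_2,\bw_2)$ but impose the $j$ coincidence delta functions $\prod_{i<j}\delta^d_M\!\bigl(\exp_{c_2}(u_2 w_{2,i})-z_i\bigr)$. These deltas force $c_2\in E(\cZ)$ (a submanifold of dimension $d-j+1$ passing through $c_1$), determine $u_2=\dist(c_2,\cZ)$, confine $V_2$ to the Grassmannian of $k$-planes containing the $j$-dimensional span of the fixed directions (i.e.\ $V_2\in Gr(k-j,d-j)$), and fix the first $j$ unit vectors in $\bw_2$; their Jacobian is tracked against the weight $u_2^{dk-1}\Upsilon_1^{d-k}(\bw_2)\prod_i\sqrt{\det g}$ from Lemma \ref{lem:Integral}. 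The free integration then involves $c_2\in E(\cZ)\cap B_{2r}(c_1)$ subject to $u_2\in(r_1,r]$ (which extracts a slab of thickness $r\Lambda^{-2}$), the Grassmannian $Gr(k-j,d-j)$, and the remaining directions on $\mathbb{S}_1(V_2)$.

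Finally, as in Lemma \ref{lem:I_0_Computations}, split the $c_2$-integration by $s=\dist(c_1,c_2)$. On $\{s\le\alpha r\}$ bound $e^{-n\vol(U)}\le e^{-\Lambda}$ and use $\vol(E(\cZ)\cap B_{\alpha r}(c_1))\le C(\alpha r)^{d-j+1}$, producing the $\alpha^{d-j+1}$ term; on $\{\alpha r<s\le 2r\}$ use the spherical-cap lower bound $\vol(U)\ge\omega_d r^d+c\,\omega_{d-1}\alpha r^d$ (which reduces to the Euclidean case via Lemma \ref{lem:RiemannianBallsComparison}) to produce the $e^{-\omega_d\alpha\Lambda/\omega_{d-1}}$ term. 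Collecting all factors---$u_1$-integration $r^{dk}\Lambda^{-2}$, the Jacobian residue from the $j$ coincidence deltas, the $c_2$-slab $r^{d-j+1}\Lambda^{-2}$, bounded Grassmannian/spherical integrals, and the void split---and converting $n$-powers to $\Lambda$-powers via $\Lambda=n\omega_d r^d$ yields the claimed bound. The most delicate step, and the main obstacle I would expect, is the precise derivation of the Jacobian arising from integrating the $j$ coincidence deltas against the $u^{dk-1}\Upsilon_1^{d-k}$ weight of Lemma \ref{lem:Integral}; this is where the additional factor $r^{j(k-j)+1}$ is produced, reflecting both the intersection geometry of the spheres $S_{u_1}(c_1),S_{u_2}(c_2)$ along which the shared points live and the interaction of the $\phi(\cY_2)\ge\eps$ non-degeneracy condition with the constrained Grassmannian integration.
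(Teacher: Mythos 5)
Your proposal captures the overall architecture correctly --- Palm theory with the ordered splitting on the $j$ shared points, reparametrization so that $c_2$ is confined to the $(d-j+1)$-dimensional equidistant set $E(\cZ)$, the split by $\dist(c_1,c_2)/r \lessgtr \alpha$ producing the $\alpha^{d-j+1}$ and $e^{-\omega_d\alpha\Lambda/\omega_{d-1}}$ terms, and the two radius constraints each contributing a factor $\Lambda^{-2}$. This matches the paper in structure. Your choice of formalism differs: you impose the shared-point constraints via coincidence delta functions and then analyze their Jacobian, whereas the paper sidesteps this entirely by building a concrete iterated change of variables --- first $(c_1, u_1, V_1, \bw_1)$ for $\cY_1$ as in Lemma \ref{lem:Integral}, then $c_2 = \exp_{c_1}(sw)$ with $s < \alpha r$ and $w \in \mathbb{S}_1(T_{c_1}E(\cZ))$, then $(u_2, W, \bw_2)$ for the remaining $k+1-j$ points with $W$ a $(k-j)$-dimensional complement to $\mathrm{span}\{(\nabla\rho^2_{z_i})_{c_2}\}$. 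In that parametrization the Jacobian is a transparent product of radial factors, and the power $d(2k+1-j)+j(k-j)+1$ comes from tallying the exponents $u_1^{dk-1}$, $s^{d-j}$, and $u_2^{(k-j)(d-(k-j))+(k-1)(k+1-j)}$ under the radial integrations --- no separate Jacobian analysis is needed.

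The genuine gap in your proposal is precisely the step you yourself flag: you assert that the delta-function Jacobian ``is where the additional factor $r^{j(k-j)+1}$ is produced,'' but you do not derive it, and you explicitly identify it as ``the main obstacle I would expect.'' That factor is the entire new content of this lemma relative to Lemma \ref{lem:I_0_Computations}; leaving it as a placeholder means the proof is missing its central computation. Moreover, the delta-function route you chose makes that computation substantially harder than necessary, both because $\delta^d_M(\exp_{c_2}(u_2 w_{2,i}) - z_i)$ is not well-defined on a manifold without first fixing a chart, and because extracting the correct Jacobian from the $u_2^{dk-1}\Upsilon_1^{d-k}$ weight against $j$ simultaneous $d$-dimensional deltas requires tracking exactly how those constraints split across $(u_2, V_2, \bw_2)$ --- which is the reparametrization the paper writes down directly. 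You also write $V_2 \in Gr(k-j, d-j)$; the paper integrates over $Gr(k-j,d)$, which is larger but harmless for an upper bound. The fix is to abandon the delta formalism and instead spell out the iterated parametrization explicitly, at which point the claimed power of $r$ follows from elementary bookkeeping of the radial exponents.
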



\begin{proof}[Proof of Lemma \ref{lem:I_0_Computations}]
We start estimating $\mean{I_0}$ by using Palm theory (Corollary \ref{cor:prelim:palm2}) as follows.
\begin{equation}\label{eq:Integral_I_0}
\begin{split}
\meanx{I_0} &=   \meanx{ \sum_{\abs{ \cY_1 \cap \cY_2 } = 0}   g_r^\eps(\cY_1,\cP_n) g_r^\eps(\cY_2,\cP_n)  (1-\Phi_{2r}(\cY_1,\cY_2))}  \\ 
&=    \frac{n^{2k+2}}{ ((k+1)!)^2} \meanx{  g_r^\eps(\cY'_1, \cY'\cup \cP_n) g_r^\eps(\cY'_2, \cY'\cup \cP_n)  (1-\Phi_{2r}(\cY'_1,\cY'_2))} \\ 
& \leq  \frac{n^{2k+2}}{ ((k+1)!)^2} \int_{M^{2k+2}} h_r^\eps( \by_1) h_r^\eps( \by_2) e^{-n \Vol(\by_1, \by_2)} (1-\Phi_{2r}(\by_1,\by_2))\abs{ \dvol_g(\by_1,\by_2) }.
\end{split}
\end{equation}
where $\by_1 , \by_2 \in M^{k+1}$ are $(k+1)$-tuples of points, 
$\Vol(\by_1, \by_2) = \vol(B_{r_2}(c_1) \cup B_{r_2}(c_2))$,
and $c_i = c(\by_i)$.

Next, we need to compare the volume of this union with the Euclidean volume of Euclidean balls with a slightly different radius. Using Lemma \ref{lem:RiemannianBallsComparison} to compare the balls, and Lemma \ref{lem:RiemannianMeasureComparison} to compare the volumes yields
\begin{eqnarray}\nonumber
\Vol(B_{r_2}(c_1) \cup B_{r_2}(c_2)) & \geq & \Vol(B^E_{(1-\nu' r)r_2}(c_1) \cup B^E_{(1-\nu' r)r_2}(c_2)) \\ \nonumber
& \geq & (1-\nu r^2) \Vol^E(B^E_{(1-\nu' r)r_2}(c_1) \cup B^E_{(1-\nu' r)r_2}(c_2)) \\ \nonumber
& \geq & (1-\nu r^2) \Vol^E(B^E_{(1-\nu' r)r}(c_1) \cup B^E_{(1-\nu' r)r}(c_2)) \\ \nonumber
& \geq & (1-\nu r^2) (2 \omega_d (1-\nu' r)^d r^d - \Vol^E(B^E_{(1-\nu' r)r}(c_1) \cap B^E_{(1-\nu' r)r}(c_2) ) \\ \nonumber
& \geq & (1- (d\nu' r + \nu r^2) +o(r^2))  \omega_d r^d  \\ \nonumber
& &\times \left( 1 +\frac{\omega_{d-1}}{\omega_d} \frac{\dist(c_1,c_2)}{r} + O \left( \frac{\dist(c_1,c_2)^2}{r} \right)  \right).
\end{eqnarray}
where we used the fact that $r_2 > r$, and in the last inequality we used the following Tailor expansion for the volume of the intersection of two balls (see Appendix C in \cite{bobrowski_vanishing_2015}), 
\begin{equation}\label{eq:vol_taylor}
\Vol^E(B^E_r(c_1) \cap B^E_r(c_2)) = \omega_d r^d - \omega_{d-1} r^{d-1} \dist(c_1,c_2)+O(r^{d-2} \dist^2(c_1,c_2)).
\end{equation}
 Next, let $\alpha>0$, to be determined later, and separate the integration in \eqref{eq:Integral_I_0} into two regions
\begin{eqnarray}\nonumber
\Omega_1 & = & \lbrace  (\by_1, \by_2) \in M^{2k+2} \ \vert \ \frac{\dist(c_1,c_2)}{r} \leq \alpha  \rbrace, \\ \nonumber
\Omega_2& = & \lbrace  (\by_1, \by_2) \in M^{2k+2} \ \vert \ \frac{\dist(c_1,c_2)}{r}> \alpha  \rbrace.
\end{eqnarray}
Then, in $S_1$ we have $\Vol(\by_1, \by_2) \geq  (1- (d\nu' r + \nu r^2) ) \omega_d r^d$. Moreover, using a similar change of variables to that of Lemma \ref{lem:Integral}, and taking $c_2$ to be in polar coordinates around $c_1$, we have

\[
\begin{split}
I_0^{(1)} &:=\int_{\Omega_1}  h_r^{\epsilon} ( \by_1)  h_r^{\epsilon}( \by_2)  e^{-n \Vol(\by_1, \by_2)} (1-\Phi_{2r}(\by_1,\by_2))\abs{ \dvol_g(\by_1,\by_2) }  \\
\leq &  \int_{\Omega_1} h_r^{\epsilon}( \by_1) h_r^{\epsilon}( \by_2) e^{-(1- (d\nu' r + \nu r^2) ) \Lambda} \abs{ \dvol_g(\by_1,\by_2) } \\
 \leq & C e^{-(1- (d\nu' r + \nu r^2) ) \Lambda} \int_M \abs{ \dvol_g(c_1) } \int_0^{\alpha r} ds \int_{\mathbb{S}_1(T_{c_1} M)} s^{d-1} \dvol_{\mathbb{S}_1(T_{c_1} M)}(w)  \\ 
  \times &\prod_{i=1}^2 \int_{r_1}^r d u_i  \int_{Gr(k,d)} u_i^{k(d-k)} d\mu_{k,d}(V) \int_{(\mathbb{S}_1(V))^{k+1}} u_i^{(k-1)(k+1)} \dvol_{(\mathbb{S}_1(V))^{k+1}} (\bw_i)  h^\eps(\exp_{c_i}(u_i\bw_i) ),
\end{split}
\]
with $c_2= \exp_{c_1}(sw)$, $\by_i = \exp_{c_i}(\bw_i)$ and $\mathbb{S}_1(V)$ denotes the unit sphere in the $k$-dimensional vector space $V$. Note that the term $C$ includes not only an upper bound for  metric-related terms, but also a bound for the $\Upsilon$ terms representing the parallelogram-volume. After carrying out the integration in the radial coordinates we get
\begin{eqnarray}\nonumber
I_0^{(1)}& \leq & C e^{-(1- (d\nu' r + \nu r^2) ) \Lambda} (\alpha r )^d r^{2dk} \left( r^{dk} -  r_1^{dk} \right)^2 \\ \nonumber
& \leq &  C e^{-(1- (d\nu' r + \nu r^2) ) \Lambda} (\alpha r )^d r^{2dk} \left( 1- \left( \frac{r_1}{r} \right)^{dk} \right)^2 ,
\end{eqnarray}
for some new constant $C$ depending on $k,d$ and the metric $g$ (see Remark \ref{rem:const}). Then, taking $r_1=(1-\xi^2/2c_g^2)r$ with $\xi=\Lambda^{-1}$ (as in \eqref{eq:r1r2}) we conclude that
\begin{eqnarray}\nonumber
I_0^{(1)}& \leq &  C  e^{-(1- (d\nu' r + \nu r^2) ) \Lambda} r^{d(2k+1)}  \alpha^d \Lambda^{-4}.
\end{eqnarray}  At this stage we require that $\Lambda r \to 0$ as $n \to + \infty$. Then, $e^{-\Lambda + (d\nu' r + \nu r^2) \Lambda} \sim e^{-\Lambda}$, this yields
\begin{eqnarray}\nonumber
I_0^{(1)} & \leq &  C \alpha^d \Lambda^{-4} e^{-\Lambda}  r^{d(2k+1)} .
\end{eqnarray}
We now turn to  evaluate  the integral over $\Omega_2$. Firstly, notice that $\Vol(\by_1, \by_2)$ is increasing with $\dist(c_1,c_2)$ and so attains its minimum value when $\dist(c_1,c_2)=\alpha r$ in this set. Secondly, notice that for the term $(1-\Phi_{2r}(\by_1,\by_2))$ to be nonzero we must have $\dist(c_1,c_2)\le 4r$. Inserting $\dist(c_1,c_2)= \alpha r$ in the Taylor expansion \eqref{eq:vol_taylor} and using  a similar change of variables yields
\[
\begin{split}
I_0^{(2)} &:=\int_{\Omega_2}  h_r^{\epsilon} ( \by_1)  h_r^{\epsilon}( \by_2)  e^{-n \Vol(\by_1, \by_2)}(1-\Phi_{2r}(\by_1,\by_2)) \abs{ \dvol_g(\by_1,\by_2) } \\ 
& \leq   \int_{\Omega_2} h_r^{\epsilon}( \by_1) h_r^{\epsilon}( \by_2) e^{-(1- (d\nu' r + \nu r^2) ) (1+ \omega_d \alpha / \omega_{d-1} ) \Lambda} \abs{ \dvol_g(\by_1,\by_2) } \\ 
& \leq  {C e^{- (1+ \omega_d \alpha / \omega_{d-1} ) \Lambda} }\int_M \abs{ \dvol_g(c_1) } \int_{\alpha r}^{4r} ds \int_{\mathbb{S}_1(T_{c_1}M)} s^{d-1} \dvol_{\mathbb{S}_1(T_{c_1}M)}(w) \times \\ 
&  \times \prod_{i=1}^2 \int_{r_1}^r d u_i  \int_{Gr(k,d)} u_i^{k(d-k)} d\mu_{k,d} \int_{(\mathbb{S}_1(V))^{k+1}} u_i^{(k-1)(k+1)} \dvol_{(\mathbb{S}_1(V))^{k+1}} (\bv_i) h_r^{\epsilon}( \exp_{c_i}(u_i\bw_i)) .
\end{split}
\]
In this case the integration in the radial coordinates yields
\begin{eqnarray}\nonumber
\int_{\Omega_2}  h_r^{\epsilon} ( \by_1) h_r^{\epsilon}( \by_2)  e^{-n \Vol(\by_1, \by_2)} \abs{ \dvol_g(\by_1,\by_2) }  \leq  C \Lambda^{-4} e^{-\Lambda} e^{- \omega_d \alpha \Lambda / \omega_{d-1} }  r^{d(2k+1)}.
\end{eqnarray}
Putting these all together into \eqref{eq:Integral_I_0} we have
$$\meanx{I_0} \leq C n \Lambda^{2k+1} \Lambda^{-4} e^{-\Lambda} \left( e^{- \omega_d \alpha \Lambda / \omega_{d-1} } + \alpha^d \right)  . $$
\end{proof}


\begin{proof}[Proof of Lemma \ref{lem:I_j_Computations}]
As before we evaluate $I_j$ using Palm theory (Corolllary \ref{cor:prelim:palm2})
\begin{equation}\label{eq:Integral_I_j}
\begin{split}
\meanx{I_j} &=   \meanx{ \sum_{\abs{ \cY_1 \cap \cY_2 } = j}   g_r^\eps(\cY_1,\cP_n) g_r^\eps(\cY_2,\cP_n)  (1-\Phi_{2r}(\cY_1,\cY_2))}  \\ 
&=    \frac{n^{2k+2-j}}{ j!((k+1-j)!)^2} \meanx{  g_r^\eps(\cY'_1, \cY'\cup \cP_n) g_r^\eps(\cY'_2, \cY' \cup \cP_n)  (1-\Phi_{2r}(\cY'_1,\cY'_2))} \\ 
& \leq  \frac{n^{2k+2-j}}{ j!((k+1-j)!)^2} \int_{M^{2k+2}} h_r^\eps( \by_1) h_r^\eps( \by_2) e^{-n \Vol(\by_1, \by_2)} (1-\Phi_{2r}(\by_1,\by_2))\dvol_g(\by).
\end{split}
\end{equation}
where 
\[
\begin{split}
\by &= (y_1,\ldots, y_{2k+2-j})\subset M^{2k+2-j},\\
\by_1 &= (y_1,\ldots, y_{k+1}),\\
\by_2 &= (y_1,\ldots, y_j, y_{k+2},\ldots, y_{2k+2-j}).
\end{split}
\]
We can repeat the previous computations to show that
\[
\begin{split}
\Vol(\by_1,\by_2) & \ge (1- (d\nu' r + \nu r^2))  \omega_d r^d  \\
& \times \left( 1 +\frac{\omega_{d-1}}{\omega_d} \frac{\dist(c_1,c_2)}{r} + O \left( \frac{\dist(c_1,c_2)^2}{r} \right)  \right),
\end{split}
\]
where again $c_i = c(\by_i)$.
Proceeding using the same steps as in the previous proof, we fix $\alpha>0$ and separate the integration in \eqref{eq:Integral_I_j} into two regions
\begin{eqnarray}\nonumber
\Omega_1 & = & \lbrace  \by \in M^{2k+2-j} \ \vert \ \frac{\dist(c_1,c_2)}{r} \leq \alpha  \rbrace, \\ \nonumber
\Omega_2& = & \lbrace  \by \in M^{2k+2-j} \ \vert \ \frac{\dist(c_1,c_2)}{r}> \alpha  \rbrace.
\end{eqnarray}

To proceed, we need a slightly more involved version of the change of variables used than the one we used in Lemma \ref{lem:I_0_Computations} for $I_0$. The iterated integration goes as follows.

\begin{itemize}
\item Integrate over $M$ to determine the first center $c_1$.
\item Find $\by_1$ on  a $(k-1)$-sphere centered at $c_1$ with radius in $(r_1 , r)$. This goes very much along the same lines as Lemma \ref{lem:Integral}
\item 
Find the second center $c_2$ so that the points $c_2 \in E  = E(y_1 , \ldots , y_j)$ (i.e.~$y_1,\ldots y_j$ are equidistant to $c_2$). 
Since $c_1\in E$ as well,  in order to find $c_2$ we integrate in $E$ using geodesic polar coordinates around $c_1$. Note that $E$  is of dimension $d-j+1$.
\item 
Fixing $c_2$, we need to choose the remaining $k+1-j$ points $(y_{k+1},\ldots, y_{2k+1-j})$. These points, together with  $(y_1,\ldots,y_j)$  span a $k$-dimensional vector space $V_2 \subset T_{c_2}M$. However, we must integrate over those spaces $V_2$ that contain the $j$ -dimensional vector space generated by the $(y_1, \ldots , y_j)$, i.e. the span of the $\langle ( \nabla \rho^2_{y_i} )_{c_2} \rangle_{i=1}^j$. Hence, we integrate  over the subspaces  $W$ that satisfy $V_2=\langle ( \nabla \rho^2_{p_i} )_{c_2} \rangle_{i=1}^j \oplus W$. Note that these are $(k-j)$-dimensional subspaces of $T_{c_2}M$.
\item Finally, we determine the remaining $k+1-j$ points, in geodesic polar coordinates, by integrating over a sphere in the vector subspace $V_2 \subset T_{c_2}M$.
\end{itemize}
In order to make the notation lighter we shall omit the reference to the points over which we are integrating in many occasions. Recalling that in $S_1$ we have $\Vol(\by_1, \by_2) \geq  (1- (d\nu' r + \nu r^2) ) \omega_d r^d$, we have
\[
\begin{split}
I_j^{(1)} &:=\int_{\Omega_1}  h_r^{\epsilon} ( \by_1)  h_r^{\epsilon}( \by_2)  e^{-n \Vol(\by_1, \by_2)} (1-\Phi_{2r}(\by_1,\by_2))\abs{ \dvol_g(\by_1,\by_2) } \\
\leq &  \int_{\Omega_1} h_r^{\epsilon}( \by_1) h_r^{\epsilon}( \by_2) e^{-(1- (d\nu' r + \nu r^2) ) \Lambda} \abs{ \dvol_g(\by_1,\by_2) } \\
 \leq & C e^{-(1- (d\nu' r + \nu r^2) ) \Lambda} \int_M \abs{ \dvol_g(c_1) } \\
 & \times \int_{r_1}^r d u_1  \int_{Gr(k,d)} u_1^{k(d-k)} d\mu_{k,d}(V_1) \int_{(\mathbb{S}_1(V_1))^{k+1}} u_1^{(k-1)(k+1)} \dvol_{(\mathbb{S}_1(V_1))^{k+1}}  \\ 
  & \times \int_0^{\alpha r} ds \int_{\mathbb{S}_1(T_{c_1}E)} s^{d-j} \dvol_{\mathbb{S}^{d-1}}(w)   \int_{r_1}^r d u_2  \int_{Gr(k-j,d)} u_i^{(k-j)(d-(k-j))} d\mu_{k-j,d}(W) \\
  & \times \int_{(\mathbb{S}_1(V_2))^{k+1-j}} u_2^{(k-1)(k+1-j)} \dvol_{(\mathbb{S}_1(V_2))^{k+1-j}}  \ h_r^{\epsilon}( \exp_{c_1}(u_1\bw_1)) h_r^{\epsilon}( \exp_{c_2}(u_2\bw_2)) ,
\end{split}
\]
with $c_2= \exp_{c_1}(sw)$ and the points $\by_1 = \exp_{c_1}(u_1\bw_1)$, $\by_2= \exp_{c_2}(u_2\bw_2)$ determined as in the previous discussion. As before, the constant $C$  accounts for the metric-dependent terms and the $\Upsilon$-terms representing the parallelogram-volumes.  Integrating in the radial coordinates and taking $r_1=(1-\xi^2/2c_g^2)r$ with $\xi=\Lambda^{-1}$ we conclude that
\begin{eqnarray}\nonumber
I_j^{(1)}& \leq & C e^{-(1- (d\nu' r + \nu r^2) ) \Lambda} \alpha^{d-j+1}  r^{d(2k+1-j)+j(k-j)+1} \left( 1- \left( \frac{r_1}{r} \right)^{dk} \right) \left( 1- \left( \frac{r_1}{r} \right)^{d(k-j)+j(k+1-j)} \right) .
\end{eqnarray}
for some new $C$ depending on $j,k,d$ and the metric $g$. Given that $\Lambda r \to 0$ as $n \to + \infty$, the exponential term can be estimated as $e^{-\Lambda + (d\nu' r + \nu r^2) \Lambda} \sim e^{-\Lambda}$, which gives
\begin{eqnarray}\nonumber
I_j^{(1)} & \leq &  C \alpha^{d-j+1} \Lambda^{-4} e^{-\Lambda}  r^{d(2k+1-j)+j(k-j)+1} .
\end{eqnarray}
We now turn to  evaluate  the integral over $\Omega_2$. Firstly, notice that $\Vol(\by_1, \by_2)$ is increasing with $\dist(c_1,c_2)$ and so attains its minimum value when $\dist(c_1,c_2)=\alpha r$ in this set. Secondly, notice that for the term $(1-\Phi_{2r}(\by_1,\by_2))$ to be nonzero we must have $\dist(c_1,c_2)\le 4r$. Inserting $\dist(c_1,c_2)= \alpha r$ in the Taylor expansion \eqref{eq:vol_taylor} and using  the same kind of change of variables yields, we have
\[
\begin{split}
I_j^{(2)} &:=\int_{\Omega_2}  h_r^{\epsilon} ( \by_1)  h_r^{\epsilon}( \by_2)  e^{-n \Vol(\by_1, \by_2)}(1-\Phi_{2r}(\by_1,\by_2)) \abs{ \dvol_g(\by_1,\by_2) } \\ 
& \leq   \int_{\Omega_2} h_r^{\epsilon}( \by_1) h_r^{\epsilon}( \by_2) e^{-(1- (d\nu' r + \nu r^2) ) (1+ \omega_d \alpha / \omega_{d-1} ) \Lambda} \abs{ \dvol_g(\by_1,\by_2) } \\ 
 \leq & C  e^{- (1+ \omega_d \alpha / \omega_{d-1} ) \Lambda} \int_M \abs{ \dvol_g(c_1) } \\
 & \times \int_{r_1}^r d u_1  \int_{Gr(k,d)} u_1^{k(d-k)} d\mu_{k,d}(V_1) \int_{(\mathbb{S}_1(V_1))^{k+1}} u_1^{(k-1)(k+1)} \dvol_{(\mathbb{S}_1(V_1))^{k+1}}  \\ 
  & \times \int_{\alpha r}^{4r} ds \int_{\mathbb{S}_1(T_{c_1}E)} s^{d-j} \dvol_{\mathbb{S}^{d-1}}(w)   \int_{r_1}^r d u_2  \int_{Gr(k-j,d)} u_i^{(k-j)(d-(k-j))} d\mu_{k-j,d}(W) \\
  & \times \int_{(\mathbb{S}_1(V_2))^{k+1-j}} u_2^{(k-1)(k+1-j)} \dvol_{(\mathbb{S}_1(V_2))^{k+1-j}}  \ h_r^{\epsilon}( \exp_{c_1}(u_1\bw_1)) h_r^{\epsilon}( \exp_{c_2}(u_2\bw_2)) ,
\end{split}
\]
In this case the integration in the radial coordinates yields
\begin{eqnarray}\nonumber
I_j^{(2)} = \int_{\Omega_2}  h_r^{\epsilon} ( \by_1) h_r^{\epsilon}( \by_2)  e^{-n \Vol(\by_1, \by_2)} \abs{ \dvol_g(\by_1,\by_2) } \leq  C \Lambda^{-4} e^{-\Lambda} e^{- \omega_d \alpha \Lambda / \omega_{d-1} }  r^{d(2k+1-j)+j(k-j)+1}.
\end{eqnarray}
Putting these all together into \eqref{eq:Integral_I_j} we have
$$\meanx{I_j} \leq C n \Lambda^{2k+1-j} r^{j(k-j)+1} \Lambda^{-4} e^{-\Lambda} \left( e^{- \omega_d \alpha \Lambda / \omega_{d-1} } + \alpha^{d-j+1} \right)  . $$
\end{proof}


\section{Proof of the main theorem}\label{sec:proof}


In this section we combine the results proved in Sections \ref{sec:upper}--\ref{sec:second_moment}  to prove the main result - Theorem \ref{thm:main}. During the proof, we shall use the term ``with high probability'' (w.h.p.~), meaning that the probability goes to $1$ as $n\to\infty$. 


\begin{proof}[Proof of Theorem \ref{thm:main}] 
We divide the proof into two parts, corresponding to the upper and lower thresholds for the phase transition.\\

\noindent{\bf Upper threshold:}\\
Suppose that $\Lambda = \log n + k\log\log n + w(n)$, and take $r_0$ to satisfy the conditions in Lemma \ref{lem:crit_pts}.
Using Lemma \ref{lem:crit_pts} we have that $\mean{C_k(r,r_0)} \to 0$ and $\mean{C_{k+1}(r,r_0)}\to 0$, which implies (using Markov's inequality) that $\prob{C_k(r,r_0)>0} \to 0$ and $\prob{C_{k+1}(r,r_0)>0}\to 0$.
Using similar arguments to the ones used in the proof of Proposition \ref{prop:betti_order}, we can conclude that $H_k(\cC_r) \cong H_k(\cC_{r_0})$ w.h.p.~since there are no critical points of index $k$ and $k+1$ in $(r,r_0]$.
In addition, from \cite{flatto_random_1977} we know that at radius $r_0$ the union of balls $B_{r_0}(\cP)$ covers $M$ w.h.p.~, and therefore by the Nerve Lemma \ref{lem:nerve} we have that $H_k(\cC_{r_0}) \cong H_k(M)$. 
Thus, we conclude that $H_k(\cC_r)\cong H_k(M)$.

\noindent{\bf Lower threshold:}\\
From Proposition \ref{prop:second_moment} we know that w.h.p.~$\beta^\eps_k(r) >\frac{1}{2}\mean{\beta_k^{\eps}(r)}$. If $\Lambda = \log n + (k-2)\log\log n -w(n)$, then from Lemma \ref{lem:theta_cycle_order} we have that $\mean{\beta_k^{\eps}(r)} \ge a_k e^{w(n)}\to \infty$.
In addition, from Lemma \ref{lem:Betas} we know that $\beta_k(r) \ge \beta_k^\eps(r)$. Therefore, we have that w.h.p.~$\beta_k(r) > \beta_k(M)$, which implies that $H_k(\cC_r)\not\cong H_k(M)$.
\end{proof}


\section{Discussion}


\label{sec:disc}
Homological connectivity is one of the fundamental properties of random simplicial complexes.
In this paper we showed that the phase transition discovered in \cite{bobrowski_vanishing_2015}, applies to any compact Riemannian manifold.
This is due to the fact that any Riemannian metric can be locally well approximated by an Euclidean one. Our results suggest that the phase transition for homological connectivity exhibited by the \cech complex should occur at a critical value of $\Lambda$ which is inside the interval
\[	
[\log n + (k-2)\log\log n, \log n + k\log \log n].
\]
We note that the first and second-order terms are identical to those in \cite{bobrowski_vanishing_2015} describing the flat torus, while the lower order term could be different and depend on the Riemannian metric. Clearly, our work here is not done, as we are yet to have found the exact threshold for homological connectivity (for either the torus or general Riemannian manifolds). This, however, is left as future work. In particular, we  propose  the following conjecture which will be addressed in future work.


\begin{con}
Let $w(n)\to\infty$, then
\[
\limninf \prob{H_k(\cC_r) \cong H_k(M)} = \begin{cases} 1 & \Lambda = \log n + (k-1)\log\log n +w(n) \vspace{10pt}\\
0 & \Lambda = \log n + (k-1)\log\log n -w(n). \end{cases}
\]\end{con}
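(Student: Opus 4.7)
The plan is to tighten both directions of the proof of Theorem~\ref{thm:main} so that the phase transition window collapses to $\Lambda = \log n + (k-1)\log\log n$. The gap in Theorem~\ref{thm:main} comes from two independent slacknesses: on the upper side, Proposition~\ref{prop:main} bounds $\meanx{\beta_k(r)-\beta_k(M)}$ by the expected number of index $k+1$ critical points of $\rho_{\cP_n}^2$ with value exceeding $r^2$, which scales like $n\Lambda^k e^{-\Lambda}$; on the lower side, the $\Theta$-cycle count in Lemma~\ref{lem:theta_cycle_order} picks up an artificial $\Lambda^{-2}$ factor because one is forced to take $\xi = \Lambda^{-1}$ in $r_1 = r(1 - \xi^2/(2c_g^2))$ and $r_2 = r(1+\xi)$ in order to guarantee $e^{-\Lambda_{r_2}} \gtrsim e^{-\Lambda}$. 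The conjectured threshold is precisely the regime in which $n\Lambda^{k-1}e^{-\Lambda}\asymp 1$, which should be the true order of magnitude of obstructions to $H_k(\cC_r)\cong H_k(M)$.

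For the upper (sufficiency) direction $\Lambda = \log n + (k-1)\log\log n + w(n)$, I would sharpen Lemma~\ref{lem:crit_pts} by observing that not every index $k+1$ critical point contributes a persistent obstruction: many are immediately cancelled by an adjacent $(k+2)$-simplex. Using the discrete Morse matching implicit in the min-type framework of \cite{gershkovich_morse_1997}, a critical point generated by $\cY$ with $\abs{\cY}=k+2$ produces a nontrivial class in $H_k(\cC_r)$ only when no point of $\cP_n$ lies in a slightly enlarged ball, $B_{r(1+\eta)}(c(\cY))\cap(\cP_n\setminus\cY)=\emptyset$, for a suitable $\eta=\eta(\Lambda)$. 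Imposing this additional emptiness condition saves one factor of $\Lambda$ in the Palm-theoretic integral of the proof of Lemma~\ref{lem:crit_pts}, yielding a refined bound of order $n\Lambda^{k-1}e^{-\Lambda}$. Combined with the coverage argument of Proposition~\ref{prop:betti_order}, this gives $\meanx{\beta_k(r)-\beta_k(M)} = o(1)$ in the claimed regime, so Markov's inequality finishes the direction.

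For the lower (necessity) direction $\Lambda = \log n + (k-1)\log\log n - w(n)$, I would enlarge the class of obstructions counted by $\beta_k^\eps(r)$. Instead of restricting $\rho(\cY)$ to the tiny band $(r_1,r]$ with $r_1 = r(1-\Lambda^{-2}/(2c_g^2))$, I would allow $\rho(\cY)$ to range over a macroscopic sub-interval $(r(1-\eta), r]$ with $\eta$ a small constant, and replace the forbidden ball $B_{r_2}(c(\cY))$ by $B_r(c(\cY))$ (exploiting the fact that $\Theta$-cycles built at scale $\rho(\cY)<r$ with an empty annulus up to scale $r$ still survive, by Lemma~\ref{lem:theta_cycle} applied at the smaller scale together with Lemma~\ref{lem:Betas}). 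The change of variables of Lemma~\ref{lem:Integral} then produces an extra integration factor of $\int_{1-\eta}^1 s^{dk-1}\, ds = \Theta(1)$ rather than $\Theta(\Lambda^{-2})$, raising the lower bound to $\meanx{\beta_k^\eps(r)}\gtrsim n\Lambda^{k-1}e^{-\Lambda} \to \infty$. A second-moment concentration argument modelled on Proposition~\ref{prop:second_moment}, but with the region decomposition $\Omega_1,\Omega_2$ revised to account for the wider admissible radii, then yields $\beta_k(r)>\beta_k(M)$ w.h.p.

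The main obstacle, by a substantial margin, will be the revised second-moment computation. When $\rho(\cY_1),\rho(\cY_2)$ are both allowed to vary over $(r(1-\eta),r]$, the joint emptiness probability in $\meanx{g_r^\eps(\cY_1,\cP_n)\,g_r^\eps(\cY_2,\cP_n)}$ depends on $\Vol(B_{r}(c(\cY_1))\cup B_{r}(c(\cY_2)))$ together with the overlap between the two independent annular conditions, and the Taylor expansion \eqref{eq:vol_taylor} for ball intersections must be replaced by a more refined estimate that is uniform over the two radii and the distance $\dist(c(\cY_1),c(\cY_2))$. Controlling the resulting cross-terms $\mean{I_j}$ for $1\le j\le k+1$ so that their sum stays $o(\meanx{\beta_k^\eps(r)}^2)$ at the sharp threshold is where the delicate geometric input of Sections~\ref{sec:Preliminaries}--\ref{sec:Morse} will need to be pushed hardest, and is where I expect the bulk of the technical work to lie.
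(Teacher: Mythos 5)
This statement is a conjecture in the paper, not a theorem; the authors explicitly leave it as future work and offer only a heuristic (the threshold at which index-$k$ critical points disappear, since $\mean{C_k(r,\infty)}\asymp n\Lambda^{k-1}e^{-\Lambda}$). So there is no paper proof to compare against. That said, your proposal as written has genuine gaps in both directions, and I do not think it would go through.

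On the lower bound: your claim that widening the admissible band of $\rho(\cY)$ to a macroscopic interval $(r(1-\eta), r]$ replaces $\Theta(\Lambda^{-2})$ by $\Theta(1)$ is incorrect. The $\Lambda^{-2}$ factor in Lemma~\ref{lem:theta_cycle_order} is not an artifact of the ad hoc choice $\xi=\Lambda^{-1}$; it is intrinsic to the $\Theta$-cycle construction. Concretely, for a $\Theta$-cycle created at radius $\rho(\cY)=\rho<r$ to survive to radius $r$, Lemmas~\ref{lem:Betas} and \ref{lem:intersect} force the exclusion ball around $c(\cY)$ to have radius at least $r + c_g\sqrt{r^2-\rho^2}$, not $r$. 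Writing $t = 1-\rho/r$, the emptiness probability is $\exp(-\Lambda)\exp\bigl(-c\Lambda\sqrt{t}\bigr)$, and the $\rho$-integral becomes essentially $\int_0^{\eta}\exp(-c\Lambda\sqrt{t})\,dt = \Theta(\Lambda^{-2})$ regardless of how large you take $\eta$. In other words, widening the band buys you nothing: the exclusion-ball penalty grows like $\exp(-c\Lambda\sqrt{t})$ and the Laplace-type integral self-truncates at $t\sim\Lambda^{-2}$. Replacing $B_{r_2}(c(\cY))$ by $B_r(c(\cY))$ is simply not admissible; with $r_2=r$ the survival condition of Lemma~\ref{lem:Betas} reads $r_1>r$, which is vacuous. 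To get a lower bound of order $n\Lambda^{k-1}e^{-\Lambda}$, one needs a genuinely different obstruction than a $\Theta$-cycle surviving a full radius window — e.g., a direct argument that an uncovered $k$-face at radius exactly $r$ already represents a nonzero class in $H_k(\cC_r)$, without requiring persistence past $r$.

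On the upper bound: the claim that imposing an additional emptiness condition $B_{r(1+\eta)}(c(\cY))\cap(\cP_n\setminus\cY)=\emptyset$ on index-$(k+1)$ critical points ``saves one factor of $\Lambda$'' is not justified by the discrete Morse framework you cite, and I do not see how it would. What Lemma~\ref{lem:MorseInequalities} controls is $\beta_k(\cC_r)-\beta_k(M)\le(\text{deaths})-(\text{births})$ over $(r,r_0]$; the slack is precisely that most of those deaths cancel against births inside the window and should not be counted at all. Your enlarged-ball condition does not identify the subset of index-$(k+1)$ critical points that kill a $k$-class born \emph{before} $r$, which is what would need to be small. Moreover the scaling does not work as stated: the extra emptiness factor is roughly $\exp(-c\Lambda\eta)$, which is $\Theta(1)$ if $\eta\lesssim\Lambda^{-1}$ (no savings) and is superpolynomially small if $\eta\gg\Lambda^{-1}$ (but then the condition excludes essentially all critical points, not a $\Lambda^{-1}$-fraction). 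There is no choice of $\eta$ that extracts a single factor of $\Lambda$. The right framework is a birth–death pairing argument tracking which index-$k$ critical points create classes that are never killed by radius $r_0$, combined with the coverage argument; this is a substantially different (and harder) bookkeeping than what you sketch, and is the heart of why the authors left the sharp threshold open.
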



Our intuition for this conjecture is the following. In random graph models (e.g. Erd\"os-R\'enyi, and geometric), the obstruction to connectivity are isolated vertices. More specifically, in \cite{erdos_evolution_1960} it is shown that the graph becomes connected exactly at the point where the last isolated vertex connects to another vertex. A similar observation appeared later in both the Linial-Meshulam model \cite{linial_homological_2006} as well as the random clique complexes \cite{kahle_sharp_2014}. In both models it was shown that the obstruction to homological connectivity are ``isolated'' or ``uncovered'' $k$-faces ($k$-simplexes that are not a face of any $(k+1)$-simplexes).
These results suggest that the same phenomenon should occur in the random \cech complex.
Condition 2 in \eqref{eq:crit_pt_cond} implies that every critical point of index $k$ introduces an uncovered $k$-face. Thus, a possible candidate for the homological connectivity threshold is the point where the last critical point of index $k$ appears. Lemma \ref{lem:crit_pts} strongly suggests that this threshold is $\Lambda = \log n + (k-1)\log\log n$.

A final remark - in this paper we focused on extending the homological connectivity result from \cite{bobrowski_vanishing_2015} to compact Riemannian manifolds. However, the methods we used in this paper could be used to translate any of the previous statements made for random \cech (and also Vietoris-Rips) complexes \cite{bobrowski_distance_2014,bobrowski_topology_2014,kahle_random_2011,kahle_limit_2013,yogeshwaran_random_2016} from the Euclidean setup to Riemannian manifolds. For example, we could provide formulae for the expected Betti numbers in the sparse regime ($\Lambda \to 0$), as well as prove a central limit theorem (CLT) in either the sparse or the thermodynamic ($\Lambda = \lambda \in (0,\infty)$) regime.
For the sake of keeping this paper at a reasonable length, we did not include these statements here.


\section*{Acknowledgements}


The authors are very grateful to Robert Adler, Robert Bryant, Mark Stern and Shmuel Weinberger for helpful conversations about this and related work.


\newpage
\section*{Appendix}
\appendix


\section{Palm Theory for Poisson Processes}


The following theorem will be very useful when computing expectations related to Poisson processes. 


\begin{thm}[Palm theory for Poisson processes]
\label{thm:prelim:palm}
Let $(X,\rho)$ be a metric space, $f:X\to\R$ be a probability density on $X$, and let $\cP_n$ be a Poisson process on $X$ with intensity $\lambda_n = n f$.
Let $h(\cY,\cX)$ be a measurable function defined for all finite subsets $\cY \subset \cX \subset X^d$  with $\abs{\cY} = k$. Then
\[
    \E\Big\{\sum_{ \cY \subset \cP_n}
    h(\cY,\cP_n)\Big\} = \frac{n^k}{k!} \mean{h(\cY',\cY' \cup \cP_n)}
\]
where $\cY'$ is a set of $k$ $iid$ points in $X$ with density $f$, independent of $\cP_n$.
\end{thm}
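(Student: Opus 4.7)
The plan is to exploit the standard representation of the Poisson process $\cP_n$ as a mixture: its cardinality $N=\abs{\cP_n}$ is $\pois{n}$ (using $\int_X f = 1$), and conditionally on $N=m$ the points of $\cP_n$ can be written as iid samples $X_1,\ldots,X_m$ from the density $f$. This will reduce the statement to a symmetry-plus-counting argument on iid samples, followed by re-identification of a Poisson residual.

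First I would condition on $N=m$. Because the $X_1,\ldots,X_m$ are iid, each of the $\binom{m}{k}$ unordered $k$-subsets $\cY\subset\{X_1,\ldots,X_m\}$ yields the same expected value of $h(\cY,\cP_n)$, namely $\E\{h(\{X_1,\ldots,X_k\},\{X_1,\ldots,X_m\})\}$. Hence
\[
\E\Big\{\sum_{\cY\subset\cP_n}h(\cY,\cP_n)\ \Big|\ N=m\Big\}=\binom{m}{k}\,\E\{h(\{X_1,\ldots,X_k\},\{X_1,\ldots,X_m\})\}.
\]
Unconditioning using $\prob{N=m}=e^{-n}n^m/m!$ and substituting $\ell=m-k$ gives
\[
\E\Big\{\sum_{\cY\subset\cP_n}h(\cY,\cP_n)\Big\}=\frac{n^k}{k!}\sum_{\ell\ge 0}e^{-n}\frac{n^\ell}{\ell!}\,\E\{h(\{X_1,\ldots,X_k\},\{X_1,\ldots,X_{k+\ell}\})\}.
\]

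The next step is to recognize the inner series as an expectation against an independent Poisson process. Fix $\cY'=\{X_1,\ldots,X_k\}$ (a set of $k$ iid $f$-distributed points) and let $\cP_n'$ denote a Poisson process with intensity $nf$ built out of the remaining iid samples $X_{k+1},X_{k+2},\ldots$ (which we may take independent of $\cY'$). By the same mixture description of $\cP_n'$, conditional on $\abs{\cP_n'}=\ell$ its points are iid copies of $f$, so the sum over $\ell$ reassembles into $\E\{h(\cY',\cY'\cup\cP_n')\}$. Since the distribution of $\cP_n'$ equals that of $\cP_n$ and is independent of $\cY'$, we obtain the claimed identity.

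The only point requiring some care is justifying the rewriting in the last step: strictly speaking, we must check that the countable rearrangement underlying the passage from $\sum_\ell$ back to an expectation is legitimate, and that measurability of $h$ on the relevant product spaces is enough for Fubini. I would handle this by first establishing the identity for nonnegative $h$ (where Tonelli applies with no integrability hypothesis), and then extending to general measurable $h$ by splitting $h=h^+-h^-$ under the implicit integrability assumption needed for both sides to make sense. Nothing beyond this bookkeeping is substantive; the core of the argument is the symmetry-plus-counting step combined with the mixture representation of the Poisson process.
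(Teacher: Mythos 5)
The paper does not include a proof of this statement---it simply refers the reader to Penrose \cite{penrose_random_2003}---so there is no in-paper proof to compare against. Your argument is correct and is essentially the standard proof found in that reference: the mixture representation of a Poisson process with finite intensity measure (Poisson cardinality, iid locations conditionally on cardinality), a symmetry-plus-counting step giving $\binom{m}{k}$ equal contributions, the index shift $\ell=m-k$ that factors out $n^k/k!$, and re-identification of the residual sum as an expectation over an independent copy of the Poisson process. Your handling of the measurability/integrability point via Tonelli for $h\geq 0$ followed by the $h=h^+-h^-$ split is exactly what is needed to make the interchange of sums and expectations rigorous. The only slightly implicit point worth making explicit is that the step ``conditional on $N=m$ the $\binom{m}{k}$ subsets all give the same expectation'' uses exchangeability of the iid array, but this is immediate and standard.
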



For a proof of Theorem \ref{thm:prelim:palm}, see for example \cite{penrose_random_2003}.
We shall also need the following corollary, which treats second moments:


\begin{cor}\label{cor:prelim:palm2}
With the notation above, assuming $\abs{\cY_1} = \abs{\cY_2} = k$,
\[
    \E\Big\{\sum_{ \substack {
                    \cY_1 ,\cY_2\subset \cP_n  \\
                    \abs{\cY_1 \cap \cY_2} = j }}
    h(\cY_1,\cP_n)h(\cY_2,\cP_n)\Big\} = {\frac{n^{2k-j}}{j!((k-j)!)^2}} \mean{h(\cY_1',\cY' \cup \cP_n)h(\cY_2',\cY' \cup \cP_n)}
\]
where $\cY' = \cY'_1 \cup \cY'_2$ is a set of $2k-j$ $iid$ points in $X$ with density $f$, independent of $\cP_n$, and $\abs{\cY_1'\cap\cY_2'} = j$.
\end{cor}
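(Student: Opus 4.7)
The plan is to reduce the statement to a single application of Theorem \ref{thm:prelim:palm}, by re-indexing the double sum so that it runs over the union $\cY := \cY_1 \cup \cY_2$ rather than over the ordered pair $(\cY_1,\cY_2)$. For a pair with $\abs{\cY_1}=\abs{\cY_2}=k$ and $\abs{\cY_1\cap\cY_2}=j$, the union $\cY$ has cardinality $2k-j$, so the natural setting for Palm theory will be $(2k-j)$-subsets of $\cP_n$.

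First I would define the symmetrised kernel
\[
\tilde h(\cY,\cP_n) \;:=\; \sum_{\substack{\cY_1,\cY_2\subset\cY \\ \abs{\cY_1}=\abs{\cY_2}=k \\ \cY_1\cup\cY_2=\cY,\ \abs{\cY_1\cap\cY_2}=j}} h(\cY_1,\cP_n)\,h(\cY_2,\cP_n),
\]
which makes sense only for $\abs{\cY}=2k-j$. Every ordered pair $(\cY_1,\cY_2)$ contributing to the original double sum corresponds uniquely to its union $\cY\subset\cP_n$ together with an internal split of $\cY$, so
\[
\sum_{\substack{\cY_1,\cY_2\subset\cP_n\\ \abs{\cY_1\cap\cY_2}=j}} h(\cY_1,\cP_n)h(\cY_2,\cP_n) \;=\; \sum_{\substack{\cY\subset\cP_n\\ \abs{\cY}=2k-j}} \tilde h(\cY,\cP_n).
\]
A direct count gives the number of internal splits of a fixed $(2k-j)$-set: choose the $j$ elements of $\cY_1\cap\cY_2$, then distribute the remaining $2k-2j$ elements, $k-j$ into $\cY_1$ and $k-j$ into $\cY_2$, yielding $\binom{2k-j}{j}\binom{2k-2j}{k-j}=\frac{(2k-j)!}{j!\,((k-j)!)^2}$.

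Next, I would apply Theorem \ref{thm:prelim:palm} with the kernel $\tilde h$ and parameter $2k-j$ to obtain
\[
\E\Big\{\sum_{\substack{\cY\subset\cP_n\\ \abs{\cY}=2k-j}} \tilde h(\cY,\cP_n)\Big\} \;=\; \frac{n^{2k-j}}{(2k-j)!}\,\E\{\tilde h(\cY',\cY'\cup\cP_n)\},
\]
where $\cY'$ is a set of $2k-j$ iid points in $X$ with density $f$, independent of $\cP_n$. Finally, since the points of $\cY'$ are iid and hence exchangeable, every one of the $\frac{(2k-j)!}{j!((k-j)!)^2}$ terms in the sum defining $\tilde h(\cY',\cY'\cup\cP_n)$ has the same expectation as a single distinguished representative $h(\cY'_1,\cY'\cup\cP_n)h(\cY'_2,\cY'\cup\cP_n)$ with $\abs{\cY'_1\cap\cY'_2}=j$. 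Multiplying the combinatorial count by the per-term expectation and cancelling $(2k-j)!$ then gives exactly the claimed identity.

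The only genuine step of work is checking the combinatorial count of splittings and justifying the exchangeability-based identification of every term of $\tilde h(\cY',\cY'\cup\cP_n)$ with the distinguished one; everything else is a direct invocation of Theorem \ref{thm:prelim:palm}, so I do not expect any real obstacle beyond careful bookkeeping.
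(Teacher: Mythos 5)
Your proof is correct. The paper itself does not prove this corollary (it simply cites \cite{bobrowski_distance_2014} for a proof), but your argument—re-indexing the double sum by the union $\cY=\cY_1\cup\cY_2$ of size $2k-j$, counting the $\frac{(2k-j)!}{j!((k-j)!)^2}$ ordered splittings, applying Theorem \ref{thm:prelim:palm} with parameter $2k-j$ to the symmetrised kernel, and then collapsing the sum over splittings via exchangeability of the iid points of $\cY'$—is the standard and correct derivation, and all the bookkeeping checks out.
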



For a proof of this corollary, see for example \cite{bobrowski_distance_2014}.


\section{Proofs for Section \ref{sec:ineqs}}\label{appendix1}


This appendix contains the proof of Lemmas \ref{lem:approx_sphere_vol}, \ref{lem:RiemannianBallsComparison} and corollary \ref{cor:Volume}.


\begin{proof}[Proof of Lemma \ref{lem:approx_sphere_vol}]
Using normal coordinates $(x^1 , \ldots , x^d)$ in a neighborhood of the point $p \in M$ we have $\dvol_g = \sqrt{ \det(g_{ij})} \ \dvol_{g_E}= dr \wedge r^{d-1} \sqrt{ \det (g_{ij})} \dvol_{{\mathbb{S}^{d-1}}}$, where $\mathbb{S}^{d-1}$ denotes the unit $(d-1)$-sphere in the Euclidean metric $g_E = \delta_{ij} dx^i \otimes dx^j$, and $\dvol_{\mathbb{S}^{d-1}}$ the volume form of the induced by the round metric on $\mathbb{S}^{d-1}$. Thus, we have that $\dvol_{{S_r(p)}}= \sqrt{\det(g_{ij})}r^{d-1} \dvol_{{\mathbb{S}^{d-1}}}$, and using  \eqref{eq:RiemannianMeasure} we have
\[
\dvol_{{S_r(p)}}  =  r^{d-1} \left( 1- \frac{Ric_{ij}}{3} x^i x^j + O(\abs{ x }^3) \right) \dvol_{{\mathbb{S}^{d-1}}}.
\]
For any $\nu>0$ we can find $r_{ \nu}(p)>0$, such that for all $r \leq r_{ \nu}(p)$
\[
\abs{ \dvol_{{S_r(p)}}  } \leq  r^{d-1} \left( 1 + \frac{\abs{ Ric } + \nu}{3} r^2 \right) \abs{  \dvol_{{\mathbb{S}^{d-1}}} },
\]
and similarly for the lower bound. Moreover, as the metric $g$ is smooth (i.e.~$C^{\infty}$) we can take $r_{\nu}(p)$ to depend  smoothly on $p \in M$. Then, we take $r_{\nu}= \min_{p\in M} r_{\nu}(p)$, which is achieved and positive as $M$ is {compact and one can chose $r_{\nu}(p)$ to vary continuously with $p \in M$, see remark \ref{rem:Continuous_r_nu}} below.
\end{proof}


\begin{rem}\label{rem:Continuous_r_nu}
Around any point $p \in M$ we can pick geodesic normal coordinates $(x^1,\ldots , x^d)$, valid in $B_{r}(p)$, for $r < \text{inj}(p)$, the injectivity radius at $p$\footnote{The injectivity radius at $p$ is the supremum of the values of $r$ such that $\exp_p : B_r(0) \subset T_pM \rightarrow M$ is injective.}. Then, for $r < \text{inj}(p)$ we may write $\dvol_{{S_r(p)}}=  r^{d-1} f(x^1,\ldots , x^d) \dvol_{{\mathbb{S}^{d-1}}}$. Moreover, for small $r$ the Taylor expansion of $f$ is such that
\[
f(x^1, \ldots , x^d)  =   1- \frac{Ric_{ij}}{3} x^i x^j -\frac{1}{9} \nabla_k Ric_{ij} x^i x^j x^k + \ldots.
\]
Then for any $\nu>0$, we have
\[
f(x^1, \ldots , x^d) \leq  1 + \frac{\abs{ Ric_{p} } + \nu}{3} r^2  ,
\]
provided that
\begin{eqnarray*}
0 & \leq & \left( 1 + \frac{\abs{ Ric_{p} } + \nu}{3} r^2  \right) - \left( 1- \frac{Ric_{ij}}{3} x^i x^j -\frac{1}{9} \nabla_k Ric_{ij} x^i x^j x^k + \ldots \right) \\
& \leq & \frac{\abs{ Ric_{p} } r^2 - Ric_{ij}x^i x^j + \nu r^2}{3}  + c_3(p) r^3  \\
& \leq & \frac{\nu r^2}{3}  + (c_3(p)+1) r^3 
\end{eqnarray*}
where $c_{3}(p) = \sup_{\abs{ v } =1}  \left(  \frac{1}{9} (\nabla_v Ric)(v,v) \right)$. Solving this inequality for $r$ we find that it is enough that 
$$r \leq r_{\nu}(p) = \min \lbrace \frac{\nu}{3 (\abs{ c_3(p) } +1)} ,\text{inj}(p) \rbrace .$$
As $g$ is a smooth metric, both $c_3(\cdot)$ and $\text{inj}(\cdot)$ are continuous and so the minimum of the two is continuous. It will be important for the proof of the previous result that one can choose this $r_{\nu}(p)$ to vary continuously with $p$.
\end{rem}


\begin{proof}[Proof of Corollary \ref{cor:Volume}]
For every point $p \in M$, we have  from \eqref{eq:RiemannianMeasure} that $\sqrt{ \det(g_{ij}) } = 1- \frac{Ric_{ij}}{3} x^i x^j + O(\abs{ x }^3)$. Thus, if we take $\nu(p) = \abs{ Ric (p) } + \delta$ for some fixed $\delta>0$, then we can find $r_{\nu}(p)>0$  such that Lemma \ref{lem:RiemannianMeasureComparison} holds. To complete the proof we take $\nu = \min_{p\in M} \nu(p)$, and $r_\nu = \max_{p\in M} r_\nu(p)$. 
\end{proof}


\begin{proof}[Proof of Lemma \ref{lem:RiemannianBallsComparison}]
Let $p_1,p_2$ be at a sufficiently small distance from each other, and let $p$ be the midpoint in the minimizing geodesic connecting $p_1$ and $p_2$. For $\dist(p_1,p_2)<2s$ we have $B_{s}(p_1) \cup B_{s}(p_2) \subset B_{2s}(p)$. Pick normal coordinates $(x^1, \ldots , x^d)$ centered at $p$ and valid in a ball of radius $2s$ centered around it. In these coordinates, the metric $g$ is within $O(s^2)$ of the Euclidean metric and its distance function $\dist_g$ is within $O(s)$ of the Euclidean one $\dist_E$, i.e.~$\dist_g/ \dist_E = 1+O(s)$. Hence, there exists $\nu_p >0$ such that 
$$(1- \nu_p s) \rho_{p_i}^E \leq \rho_{p_i} \leq (1+ \nu_p s) 
\rho_{p_i}^E,\quad i=1,2. $$
From this, it immediately follows that for any $r \leq s$
$$\left ( B^E_{(1- \nu_p s)r}(p_1) \cup B^E_{(1- \nu_p s)r}(p_2) \right) \subset \left( B_{r}(p_1) \cup B_{r}(p_2) \right) \subset \left( B^E_{(1+ \nu_p s)r}(p_1) \cup B^E_{C(1+ \nu_p s)r}(p_2) \right).$$
The result then follows from using the compactness of $M$, and  maximizing $\nu_p$ over all $p_1,p_2 \in M$ within distance $2s$ from each other.
\end{proof}


\section{A convexity result}\label{appendix2}


In this section we present a result regarding equidistant hypersurfaces, i.e.~set of points which are equidistant to a fixed pair points. In the Euclidean case this is simply a hyperplane and thus a totally geodesic submanifold. In the following we prove that in a Riemannian manifolds, if two points are sufficiently close, then their equidistant hypersurface is approximately totally geodesic.  This result is used in the proof of  Lemma \ref{lem:center}, where we restrict a strictly convex function to a small open set in an equidistant hypersurface. Our result here guarantees that the restricted function is locally convex and thus admits a unique minimum.


\begin{lem}\label{lem:Strictly_Convex}
Let $p \in M$, then there exists $r_p>0$ that satisfies the following. For $p_1,p_2 \in B_{r_p}(p)$ let
$$E_{p_1, p_2}= \lbrace x \in M \ \vert \ \dist(p_1,x)= \dist(p_2,x) \rbrace$$
be the equidistant hypersurface between $p_1$ and $p_2$. Then, $\dist^2(p, \cdot)$ restricted to $E_{12} \cap B_{r_p}(p)$ is strictly convex.
\end{lem}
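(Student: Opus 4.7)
The plan is to work in geodesic normal coordinates $(x^1,\dots,x^d)$ centered at $p$, in which $g_{ij}=\delta_{ij}+O(|x|^2)$ by \eqref{eq:NormalCoords}, the Christoffel symbols are $\Gamma^k_{ij}=O(|x|)$, and the Gauss lemma gives $\dist(p,x)=|x|$. Thus the function under study is simply $f(x):=\dist^2(p,x)=|x|^2$. A direct computation yields $(\operatorname{Hess}_g f)_{ij}=\partial_i\partial_j f-\Gamma^k_{ij}\partial_k f=2\delta_{ij}-2\Gamma^k_{ij}x^k=2\delta_{ij}+O(|x|^2)$, so as a symmetric bilinear form $\operatorname{Hess}_g(f)=2g+O(|x|^2)$, and in particular on $B_{r_p}(p)$ one has $\operatorname{Hess}_g(f)(X,X)\geq(2-C_1 r_p^2)|X|_g^2$. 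This already handles the trivial case $p_1=p_2$, so from here on I assume $p_1\neq p_2$, set $v_i=\exp_p^{-1}(p_i)\in T_pM$, let $d=\dist(p_1,p_2)=|v_1-v_2|$, and write $F(x):=\dist^2(p_1,x)-\dist^2(p_2,x)$, so that $E_{p_1,p_2}\cap B_{r_p}(p)=F^{-1}(0)\cap B_{r_p}(p)$.

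Next I analyze $F$ using the standard normal-coordinate expansion $\dist^2(p_i,x)=|x-v_i|^2+R_i(x)$ with $R_i=O(r_p^4)$, giving
\[
F(x)\ =\ -2\,x\cdot(v_1-v_2)\ +\ (|v_1|^2-|v_2|^2)\ +\ (R_1-R_2)(x).
\]
Expressing $R_1-R_2$ as the integral of the derivative in the base-point variable along the short geodesic from $p_2$ to $p_1$ shows that it inherits an overall factor $d$ compared with $R_i$ itself, and combined with the Christoffel correction produced by the linear piece this yields the two key bounds
\[
|\nabla F|_g\ \geq\ 2d\bigl(1-O(r_p^2)\bigr)\ >\ 0,\qquad \bigl|\operatorname{Hess}_g(F)(X,X)\bigr|\ \leq\ C_2\,d\,r_p\,|X|_g^2.
\]
So $F^{-1}(0)$ is a smooth embedded hypersurface near $p$ with unit normal $\vec n=\nabla F/|\nabla F|_g$, and its scalar second fundamental form $h(X,X)=-\operatorname{Hess}_g(F)(X,X)/|\nabla F|_g$ satisfies $|h(X,X)|\leq C_3\,r_p\,|X|_g^2$; the crucial point is that the factors of $d$ cancel.

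To finish I apply the Gauss-type identity
\[
\operatorname{Hess}_{g|_S}(f|_S)(X,X)\ =\ \operatorname{Hess}_g(f)(X,X)\ +\ h(X,X)\,\partial_{\vec n}f,\qquad S=E_{p_1,p_2}\cap B_{r_p}(p).
\]
On $S$ the equation $F(x)=0$ gives $2x\cdot(v_1-v_2)=|v_1|^2-|v_2|^2+O(d\,r_p^3)$, hence $\partial_{\vec n}f=\langle 2x+O(r_p^3),\vec n\rangle_g=(|v_1|^2-|v_2|^2)/d+O(r_p^2)$, which is bounded in magnitude by $|v_1|+|v_2|+O(r_p^2)\leq 2r_p+O(r_p^2)$ \emph{uniformly in $d$}. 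Combining the three bounds,
\[
\operatorname{Hess}_{g|_S}(f|_S)(X,X)\ \geq\ \bigl(2-C_1 r_p^2-2r_p\cdot C_3 r_p\bigr)|X|_g^2\ =\ \bigl(2-O(r_p^2)\bigr)|X|_g^2,
\]
which is strictly positive for $r_p$ small. All the constants depend continuously on $p$, and compactness of $M$ lets me pick a single $r_p>0$ that works everywhere.

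The main obstacle is the cancellation leading to $|h|=O(r_p)$. In the Euclidean limit $F$ is exactly linear in $x$, so its Euclidean Hessian vanishes, and the Riemannian $\operatorname{Hess}_g(F)$ comes entirely from curvature and Christoffel-symbol corrections; these must be shown to vanish simultaneously as $d\to 0$ (because $F\equiv 0$ when $p_1=p_2$) and as $r_p\to 0$ (by the normal-coordinate expansion). Quantifying this joint vanishing to obtain $\operatorname{Hess}_g(F)=O(d\,r_p)$, matching the $O(d)$ lower bound for $|\nabla F|$, is what prevents $|II|$ from blowing up as the two points coalesce; this is the geometric content of the argument and, together with the smallness of $\partial_{\vec n}f$, is what makes the proof go through uniformly in the pair $(p_1,p_2)$.
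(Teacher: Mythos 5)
Your proof is correct, and it takes a genuinely different (more computational) route than the paper. The paper introduces the rescaled metric $g_\delta = \delta^{-2}s_\delta^* g$ (with $s_\delta(x)=\delta x$), observes that $g_\delta\to g_E$ uniformly, and then argues by a soft Taylor-expansion/compactness argument in $\delta$: the restricted Hessian $H_\delta$ converges to the Euclidean $H_0$, which is the restriction of $|x|^2$ to a hyperplane and hence positive definite, so $H_\delta>0$ for $\delta$ small. You instead work directly in normal coordinates, compute $\operatorname{Hess}_g(\dist^2(p,\cdot)) = 2g + O(r_p^2)$, and decompose the restricted Hessian via the Gauss-type identity $\operatorname{Hess}_{g|_S}(f|_S) = \operatorname{Hess}_g f + h\,\partial_{\vec n}f$; the heart of your argument is the explicit bound $|h|=O(r_p)$ \emph{uniformly in} $d=\dist(p_1,p_2)$, obtained by showing that both $|\nabla F|$ and $|\operatorname{Hess}_g F|$ scale like $d$ so the factors cancel in the ratio defining the second fundamental form. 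This is an advantage of your approach: the paper's compactness-in-$\delta$ argument implicitly requires uniformity of the error $H_\delta-H_0$ as the rescaled points $\delta^{-1}p_1,\delta^{-1}p_2$ range over $\overline B_1$ including near the diagonal, and your $O(d\,r_p)/O(d)$ cancellation is precisely the estimate that justifies this uniformity, which the paper leaves implicit. The paper's rescaling, on the other hand, buys a cleaner conceptual picture (everything converges to a flat model in a fixed coordinate ball) and avoids having to introduce and estimate the second fundamental form at all. Two small items you should flesh out: the claim $\operatorname{Hess}_g F = O(d\,r_p)$ needs the observation that $R(q,x):=\dist^2(q,x)-|x-q|^2$ satisfies $\partial_q R|_{q=p}\equiv 0$ (so $\partial_q\operatorname{Hess}_x R = O(r_p)$) before the mean-value argument along the geodesic from $p_2$ to $p_1$ delivers the extra factor of $d$; and the Christoffel contribution $2\Gamma^k_{ij}(v_1-v_2)_k = O(r_p\,d)$ to $\operatorname{Hess}_g F$ from the affine part of $F$ should be stated explicitly, since this is where the $O(|x|)$ Christoffel-symbol decay in normal coordinates is used.
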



\begin{proof}
Fix normal coordinates $ x= (x^1, \ldots , x^d)$ in a $\delta$-neighborhood of $p$ with $x(p)=0$ and let
$$s_{\delta} (x) = \delta x , $$
for $(x^1)^2+ \ldots (x^d)^2 \leq 1$.  Using the pullback $s_{\delta}^* g$ of the metric $g$ to the Euclidean unit ball,  we define the metric $g_{\delta}:= \delta^{-2} s_{\delta}^* g$. In the coordinates $(x^1, \ldots , x^{d})$ we can write $g_{\delta} = g_{ij}^{\delta} dx^i \otimes dx^j$ with
\begin{equation}\label{eq:Rescalled_Metric}
g_{ij}^{\delta} (x) =  g_{ij}(\delta x) = \delta_{ij} + \frac{\delta^2}{3} R_{iklj} x^k x^l + O(\delta^3 ).
\end{equation}
Moreover, given $p_1 \in \B_{\delta}(p)$, we can write the distance function with respect to $g_{\delta}$ in the coordinates $(x^1,\ldots , x^d)$ as
\begin{equation}\label{eq:Equidistant_Hypersurface}
{\delta} \dist_{\delta}(\delta^{-1} p_1, \delta^{-1} x) = \dist (p_1,x),
\end{equation}
where $ \dist (p_1,x)$ denotes the distance function of $g$ in the same coordinates. The reason for using  this rescaling is twofold:
\begin{itemize}
\item The metric $g_{\delta}$ converges uniformly to the Euclidean metric, in the sense that $g^{\delta}_{ij} \rightarrow \delta_{ij}$ as $\delta \rightarrow 0$. This is a direct consequence of equation \eqref{eq:Rescalled_Metric} Moreover,  this way the coordinates $x$ are fixed and their range is not shrinking.
\item Let $p_1,p_2 \in B_{\delta}(p)$. The equidistant hypersurfaces $E_{p_1,p_2}$ and $E^{\delta}_{\delta^{-1}p_1,\delta^{-1}p_2}$ of the metrics $g$ and $g_{\delta}$ inside $B_1(p)$ are related by $E^{\delta}_{\delta^{-1}p_1,\delta^{-1}p_2}= \delta^{-1} E_{p_1,p_2}$. This claim is immediate from equation \eqref{eq:Equidistant_Hypersurface}.
\end{itemize}
We wish prove that, restricted to $E_{p_1, p_2} \cap B_{\delta}(p)$, the function $\dist(p, \cdot)$ is convex. Recall that $p=0$ in this coordinates. Then, given the relations above and the fact that $\dist_{\delta}(p,\cdot)= \delta^{-1} \dist(p, \delta \cdot)$ it will be enough to show that $\dist_{\delta}(p, \cdot)$ restricted to $E_{\delta^{-1}p_1,\delta^{-1}p_2}^{\delta} \cap B_1(p)$ is strictly convex.

From the first bullet above it follows that $\dist_{\delta}(p, \cdot)$ varies smoothly with $\delta$ and agrees with the Euclidean distance function $\sqrt{(x^1)^2+ \ldots + (x^d)^2}$ when $\delta =0$. Hence, using a Taylor expansion in $\delta$ we have
$$\dist_{\delta}^2(p,x)= (x^1)^2 + \ldots (x^d)^2 + \delta f(x, \delta),$$
where $f(x,\delta)$ is smooth in both variables, and uniformly bounded in a neighborhood of $\delta=0$.
Similarly, the equidistant hypersurfaces vary smoothly with $\delta$ and at $\delta=0$ must coincide with the Euclidean equidistant hyperplane between $p_1$ and $p_2$. Using a Taylor expansion we have
$$\dist_{\delta}^2(p_1, x)-\dist_{\delta}^2(p_2,x)= H_{12}(x)+ \delta h(x, \delta),$$
where $H_{12}$ is an affine function and  $h(x, \delta)$ is a smooth function, uniformly bounded around $\delta=0$.  Restricting $\dist_{\delta}^2(p,x)$ to $E_{\delta^{-1}p_1,\delta^{-1}p_2}^{\delta} = E_{12}^{\delta}$ and using the induced metric on $E_{12}^{\delta}$, we can regard the Hessian $H_{\delta}$ of $\dist_\delta^2$ as an endomorphism of $TE_{12}^{\delta}$. For $\delta=0$ we have a restriction of a strictly convex function to a totally geodesic submanifold (an hyperplane), and therefore $H_0$ is positive definite. Thus, for small $\delta$ we have
$$H_{\delta}= H_0 + \delta H_1 + O(\delta^2),$$
and there exists $r_p>0$ such that for $\delta< r_p$ we have that $H_{\delta}$ is still positive definite, which implies  that the restriction of $\dist_{\delta}^2(p,x)$ to $E_{12}^{\delta}$ is strictly convex.
\end{proof}


\section{An excess inequality}


In this section we prove an excess inequality for certain sets of points $\mathcal{Y}$ in a sufficiently small normal ball. This inequality is used in the proof of Lemma \ref{lem:Betas}.
 Recall that using normal coordinates \eqref{eq:NormalCoords} centered at a point $p \in M$, the metric is approximately Euclidean to first order. Hence, $\rho^2_p$ approaches the squared Euclidean distance in a small enough neighborhood of $p$. 
 In the Euclidean space, the Hessian matrix satisfies $\nabla^2\rho_p^2 = 2I$, where $I$ is the identity matrix.
Therefore, for a Riemannian distance function, if $r>0$ is sufficiently small then there exists $A(r)>0$ such that on $B_{r}(p)$ we have
\begin{eqnarray}\label{eq:HesianInequality}
\nabla^2 \rho_p^2 \geq 2 A(r) I.
\end{eqnarray}
Moreover, the constant $A(r)$ converges to $1$ as $r \rightarrow 0$. We note that the constant $A$ can be taken to be greater than $1$ if $g$ has negative sectional curvatures at $p$, and it must be taken to be less than $1$ if one of the sectional curvatures is positive.


\begin{lem}\label{lem:intersect}
There exists a continuous function $c_g : (0, r_{\max}] \rightarrow \mathbb{R}$, that depends only the metric $g$, and satisfies the following. Let $\cY \subset \cP$ be such that $E_{r_{\max}}(\cY) \neq \emptyset$ (and $c(\cY)$ is well defined) and such  that $0\in \Delta(\cY)$. Then for every $r \in ( \rho(\cY), r_{\max} )$ and $x \in B_r^\cap(\cY)$, we have
\[
\dist(c(\cY), x) \leq { c_g(r) }\sqrt{r^2 - \rho^2(\cY)}.
\]
In addition, we have  that $\lim_{r \to 0} c_g(r)=1$.
\end{lem}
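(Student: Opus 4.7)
The plan is to use a strong-convexity argument for the weighted sum $F(z) := \sum_i \lambda_i \rho_{y_i}^2(z)$, where the weights $\lambda_i$ realize $0$ as a convex combination in $\Delta(\cY)$. This mimics the elementary Euclidean excess estimate (which comes from summing $\|y_i-x\|^2 - \|y_i-c\|^2 \le r^2 - \rho^2$ against a convex combination that kills $\sum_i \lambda_i(y_i-c)$) and transfers it to the Riemannian setting via the Hessian comparison \eqref{eq:HesianInequality}. Set $c = c(\cY)$ and $\rho = \rho(\cY)$, and write $y_i = \exp_c(v_i)$ with $\abs{v_i} = \rho$. From the identity $(\nabla \rho_{y_i})_c = -v_i/\rho$ established inside the proof of Lemma~\ref{lem:Integral}, one gets $(\nabla \rho_{y_i}^2)_c = -2 v_i$. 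The hypothesis $0 \in \Delta(\cY) = \conv\{-2 v_i\}$ then yields weights $\lambda_i \ge 0$ with $\sum_i \lambda_i = 1$ and $\sum_i \lambda_i v_i = 0$.

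The previous identity gives $(\nabla F)_c = -2\sum_i \lambda_i v_i = 0$, so $c$ is a critical point of $F$. Applying \eqref{eq:HesianInequality} to each summand on $B_r(y_i)$ yields $\nabla^2 F \ge 2A(r)\,I$ on $B_r^\cap(\cY)$. Assuming that $r_{\max}$ has been shrunk (using the compactness of $M$ and a standard convexity-radius argument) so that every closed ball of radius at most $r_{\max}$ is geodesically convex, the minimizing geodesic $\gamma$ from $c$ to any $x \in B_r^\cap(\cY)$ remains inside every $B_r(y_i)$, and hence inside $B_r^\cap(\cY)$. A second-order Taylor expansion of $F \circ \gamma$, combined with $(\nabla F)_c = 0$, then gives
\[
F(x) \;\ge\; F(c) + A(r)\,\dist(c,x)^2 \;=\; \rho^2 + A(r)\,\dist(c,x)^2.
\]
On the other hand, every $z \in B_r^\cap(\cY)$ satisfies $\rho_{y_i}(z) \le r$, so $F(x) \le \sum_i \lambda_i r^2 = r^2$. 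Combining the two bounds gives $\dist(c,x)^2 \le A(r)^{-1}(r^2 - \rho^2)$, i.e., the claim with $c_g(r) := A(r)^{-1/2}$. Since the metric is smooth and $M$ is compact, $A$ can be chosen continuous on $(0,r_{\max}]$ with $A(r) \to 1$ as $r \to 0$, which in turn makes $c_g$ continuous and satisfies $\lim_{r\to 0} c_g(r) = 1$.

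The main obstacle is justifying that the Hessian bound, which a priori holds only in a neighborhood of each $y_i$, can be applied along the entire geodesic from $c$ to $x$; this is handled by working below the (uniform) convexity radius of $M$, after which each $B_r(y_i)$ is geodesically convex and the pointwise Hessian bound integrates into the global strong-convexity inequality used above. Apart from this geometric verification, the argument is essentially the Euclidean calculation plus a curvature-dependent correction, and does not require further heavy estimates.
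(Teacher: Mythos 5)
Your proof is correct and follows essentially the same route as the paper's: both rely on the Hessian comparison $\nabla^2\rho_p^2 \ge 2A(r)I$ and on the existence of convex weights $\alpha_i$ (from $0\in\Delta(\cY)$) that annihilate the gradient term; the paper integrates each $\rho_{y_i}^2$ along the geodesic and sums the resulting scalar inequalities with weights $\alpha_i$, whereas you form the weighted function $F=\sum_i\lambda_i\rho_{y_i}^2$ first and apply the strong-convexity argument to $F$ directly, which is a purely cosmetic repackaging. Your explicit remark that $r_{\max}$ must lie below the convexity radius so the minimizing geodesic stays in $B_r^\cap(\cY)$ is a small but genuine point of care that the paper leaves implicit.
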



\begin{proof}
Let $\cY = \set{y_1,\ldots, y_k}$,   $x \in B_r^\cap(\cY)$, and  $\ell = \dist(c(\cY),x)$. In addition, let $\gamma: [0,\ell] \rightarrow M$ be the  minimizing geodesic from $c(\cY)$ to $x$, using the arc-length parametrization. For each $y_i \in \cY$ consider the squared distance from $y_i$ to a point $\gamma(t)$ on that geodesic (see Figure \ref{fig:geodesic}). Using the Hessian inequality \eqref{eq:HesianInequality}, and recalling that  $\gamma$ is parametrized with respect to arc-length, we have
\[
\frac{1}{2} \frac{d^2}{dt^2} (\rho_{y_i}^2 \circ \gamma)(t) = \frac{1}{2} ( \nabla^2_{\dot{\gamma}, \dot{\gamma}} \rho_{y_i}^2 )(\gamma(t)) \geq A(r) \abs{ \dot{\gamma}(t) }^2 = A(r),\quad 1\le i\le k,
\]
 Integrating this inequality from $0$ to $t$, using the fundamental theorem of calculus and multiplying by $2$ yields $\frac{d}{dt} (\rho_{y_i}^2 \circ \gamma)(t) \geq \langle \nabla \rho_{y_i}^2, \dot{\gamma}(0) \rangle + 2At$. Applying the fundamental theorem of calculus again we have
\[
\rho^2_{y_i}(\gamma(t))-\rho^2_{y_i}(\gamma(0))  \geq  \langle \nabla \rho_{y_i}^2, \dot{\gamma}(0) \rangle t + A(r)t^2 .
\]
Recall that  $\gamma(\ell)=x$, $\gamma(0)=c(\cY)$,  $\rho_{y_i}(x) \leq r$, $\rho_{y_i}(c(\cY))=\rho(\cY)$, and $\ell=\dist(c(\cY), x)$, then  putting everything into the last inequality we have
\begin{equation}\label{eq:excess_ineq}
A(r) \dist^2(c(\cY), x) \leq r^2 - \rho^2(\cY) -  \langle \nabla \rho_{y_i}^2, \dot{\gamma}(0)\rangle l,\quad 1\le i \le k.
\end{equation}
Finally, we use the assumption that $0\in \Delta(\cY)\subset T_{c(\cY)}M$, which implies that there exist $\lbrace \alpha_i \rbrace_{i=1}^{k}$ such that $\alpha_i \in [0,1]$, $\sum_i\alpha_i = 1$, and $\sum_{i=1}^{k} \alpha_i \nabla \rho_{y_i}^2 =0$. Multiplying each of the $k$ inequalities \eqref{eq:excess_ineq} by the corresponding $\alpha_i$ and summing them up  yields
\[
\dist(c(\cY), x)^2 \leq (r^2 - \rho^2(\cY))/A^2(r).
\]
To complete the proof, we set $c_g(r) = 1/\sqrt{A(r)}$. The continuity of $c_g(r)$ and the fact that is approaches $1$ as $r \to 0$  follow from the properties of $A(r)$.

\end{proof}


\begin{figure}[h]
\centering
\includegraphics[scale=0.4]{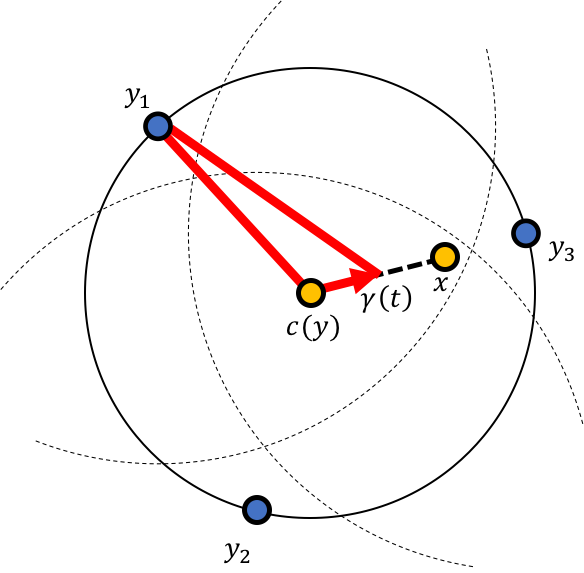}
\caption{\label{fig:geodesic}  The temporary construction of geodesics used in the proof of Lemma \ref{lem:intersect}. Here $\cY=\set{y_1,y_2,y_3}$, with $c(\cY)$ as its center. $\gamma(t)$ is a geodesic connecting $c(\cY)$ and $x\in B_r^\cap(\cY)$.}
\end{figure}


\bibliographystyle{plain}
\bibliography{refs}


\end{document}